 \definecolor{blue(ryb)}{rgb}{0.01, 0.28, 1.0}
\definecolor{brandeisblue}{rgb}{0.0, 0.44, 1.0}
\definecolor{ceruleanblue}{rgb}{0.16, 0.32, 0.75}
\definecolor{cobalt}{rgb}{0.0, 0.28, 0.67}
\definecolor{coolblack}{rgb}{0.0, 0.18, 0.39}
\definecolor{darkblue}{rgb}{0.0, 0.0, 0.55}
\def\nz{\ifmmode {I\hskip -3pt N} \else {\hbox {$I\hskip -3pt N$}}\fi}
\def\zz{\ifmmode {Z\hskip -4.8pt Z} \else

       {\hbox {$Z\hskip -4.8pt Z$}}\fi}
\def\qz{\ifmmode {Q\hskip -5.0pt\vrule height6.0pt depth 0pt
       \hskip 6pt} \else {\hbox
       {$Q\hskip -5.0pt\vrule height6.0pt depth 0pt\hskip 6pt$}}\fi}
\def\rz{\ifmmode {I\hskip -3pt R} \else {\hbox {$I\hskip -3pt R$}}\fi } 
\def\cz{\ifmmode {C\hskip -4.8pt\vrule height5.8pt \hskip 6.3pt} \else 
{\hbox {$C\hskip -4.8pt\vrule height5.8pt \hskip 6.3pt$}}\fi} 
\def\curl{{\rm curl}\,}
\def \Ab{{\bf A}}
\def\Og {{\cal O}} 
\def\Nb {{\bf N}} 
\def\and {{\rm \; and \;}}
\def\dist {{\rm \; dist \;}}
\def\Bb{\mathbf B}
\def\Tb{\mathbf T}
\def\Vb{\mathbf V}
\def\Nb{\mathbf N}
\def\nb{\mathbf n}
\def\R{\mathbb R}
\def\Z{\mathbb Z}
\def\C{\mathbb C}
\definecolor{ao(english)}{rgb}{0.0, 0.5, 0.0}
\newcommand{\beq}{\begin{equation}}
\newcommand{\eeq}{\end{equation}}
\newcommand {\pa}{\partial}
\renewcommand*{\overrightarrow}[1]{\vbox{\halign{##\cr 
  \tiny\rightarrowfill\cr\noalign{\nointerlineskip\vskip1pt} 
  $#1\mskip2mu$\cr}}}
\newtheorem{theorem}{Theorem}[section]
\newtheorem{lemma}[theorem]{Lemma}
\newtheorem{proposition}[theorem]{Proposition}
\newtheorem{remark}[theorem]{Remark}
\newtheorem{assumption}[theorem]{Assumption}
\title{\bf  Helical magnetic fields and  semi-classical asymptotics of the lowest 
 eigenvalue }
\author{B. Helffer, A. Kachmar}
\begin{document}
\bibliographystyle{plain}
\maketitle 

%
\begin{abstract}
We study the 3D Neuman magnetic Laplacian in the presence  of a semi-classical parameter and a non-uniform magnetic field with constant intensity. We determine a sharp two term asymptotics for the lowest eigenvalue, where the second term involves a quantity related to the magnetic field and the  geometry of the domain. In the special case of the unit ball  and a helical magnetic field,  the concentration takes place  on two symmetric points of the  unit sphere. 
\end{abstract}
\section{Main results}
Let   $\Omega\subset\R^3$ be  an open  and bounded set   with  a smooth boundary  $\partial\Omega$.  Let  us consider a smooth  magnetic field $\Bb:\overline{ \Omega} \to\R^3$  (so $\Bb$ should be closed) which will always be assumed to satisfy
\begin{equation}\label{eq:B}
\forall\,x\in\Omega, \quad |\Bb(x)|=b
\end{equation}
where $b>0$ is a constant.   Without loss of generality, we assume from now on that $b=1$.
  Let $\Ab(x)$ be a magnetic potential such that
\begin{equation}\label{eq:A}
\curl \Ab = \Bb\;.
\end{equation}
We are interested in the analysis of the lowest eigenvalue
 $\lambda_1( \Ab,h)$ of 
 the Neumann realization of the Schr\"odinger operator in $\Omega $ with magnetic field
\begin{equation}\label{eq:op}
P_\Ab^h:=\Delta_{h,\Ab} = \sum_{j=1}^3 (hD_{x_j} + A_j(x))^2\;.
\end{equation} 
  
We introduce the following assumptions. 
\begin{assumption}[C1]\label{ass:C1}~\\
 The set of boundary points where $\Bb$ is
tangent to $ \partial \Omega$, i.e.
\begin{align}\label{eq:1.4}
\Gamma:= \{ x \in \partial \Omega\,\big|\, \Bb \cdot \Nb(x) = 0 \},
\end{align}
is a regular submanifold of $\partial \Omega$~:
\begin{equation}\label{eq:k-n,B}
\kappa_{n,\Bb}(x):= |d ^T (\Bb \cdot \Nb) (x)|  \neq 0\;,\; \forall x\in \Gamma\;.
\end{equation}
Here $d^T$ is the differential defined on functions on $\pa \Omega$ and  $\mathbf N(x)$ is the unit inward normal of $\Omega$.
\end{assumption}

\begin{assumption}[C2]\label{ass:C2}~\\
 The set of points where $ \Bb$ is
tangent to $ \Gamma$ is finite.
\end{assumption}
 
These assumptions are rather generic and  
for instance satisfied for ellipsoids, when $\Bb$ is constant. When  $|\Bb|$ is constant, the above assumptions hold for the sphere with a helical magnetic field (see Sec.~\ref{sec:helix}).

Let us introduce the  constant  $  \widehat{\gamma}_{0,\Bb}$ involving the ``magnetic curvature'' in \eqref{eq:k-n,B},  which 
  is defined by
\begin{equation}\label{eq:hat-gam}
\widehat{\gamma}_{0,\Bb}:= \inf_{x\in \Gamma}
\widetilde{\gamma}_{0,\Bb}(x),\end{equation}
where
\begin{equation}\label{eq:tilde-gam}
\widetilde{\gamma}_{0,\Bb}(x) := 2^{-2/3} \widehat{\nu}_0 \delta_0^{1/3} 
|\kappa_{n,\Bb}(x)|^{2/3}
\Big(1 - (1-\delta_0)| \Tb(x)\cdot \Bb(x) |^2
\Big)^{1/3}\;.
\end{equation}
Here  $\Tb(x)$ is the oriented,
unit tangent vector to $ \Gamma$ at the point $
x$,
$\delta_0\in ]0,1[$ and $ \widehat \nu_0>0$ are spectral quantities  relative to the De Gennes and Montgomery operators which will be   introduced in  \eqref{eq:Th0} and \eqref{eq:Mont}.

When $\Bb$ is constant,  the following two-term asymptotics of  $ \lambda_1(B)$ has been established
by Helffer-Morame \cite{HelMo4} and Pan \cite{Pan3D}.
\begin{theorem}\label{thabove}~\\
Let us assume that $\Bb$ is constant.
Then,  if $
\Omega$  and $ \Bb$ satisfy {\rm (C1)-(C2)}, there exists $\eta >0$ such that the lowest eigenvalue $\lambda_1^N(\Ab,h)$  satisfies as
$ h \rightarrow  0$ 
\begin{equation}\label{2Texp} 
\lambda_1^N(\Ab,h)= \Theta_0 h + \widehat{\gamma}_{0,\Bb}  h^{\frac{4}{3}} + {\mathcal O}(h^{\frac{4}{3} + \eta})\,.
\end{equation}
\end{theorem}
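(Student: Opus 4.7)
The strategy is the standard semi-classical matching of an upper bound (via quasi-mode construction) with a lower bound (via an IMS-type localized analysis), the novelty being the identification of a reduced model operator that combines De Gennes and Montgomery features. The exponent $4/3$ and the constants $\widehat{\nu}_0,\delta_0$ reflect the fact that $\Bb\cdot\Nb$ vanishes linearly on $\Gamma$, so the effective magnetic well near $\Gamma$ has the Montgomery scaling $h^{1/3}$ tangentially, while the Neumann boundary still forces the De Gennes scaling $h^{1/2}$ normally.

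\emph{Upper bound.} I would pick a minimizer $x_0\in\Gamma$ of $\widetilde{\gamma}_{0,\Bb}$ and introduce boundary-fitted coordinates $(s,t,r)$ near $x_0$, where $r$ is the distance to $\partial\Omega$, $s$ is arclength along $\Gamma$, and $t$ is transverse to $\Gamma$ in $\partial\Omega$ (so $\{t=0\}\cap\partial\Omega=\Gamma$ locally). After choosing a gauge for $\Ab$ adapted to this frame, the operator $P_\Ab^h$ becomes a perturbation of a model whose leading part, upon the rescaling $r\sim h^{1/2}$, $t\sim h^{1/3}$ and treating $s$ as a slow variable, combines a De Gennes operator in $r$ with a Montgomery operator in $t$, coupled through the tangential component $\Tb\cdot\Bb$. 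A trial state obtained as the tensor product of the De Gennes and Montgomery ground states with a concentrated slow profile in $s$ and a phase factor absorbing the leading vector potential yields, after a controlled expansion of the Rayleigh quotient, the value $\Theta_0 h + \widetilde{\gamma}_{0,\Bb}(x_0)h^{4/3}(1+o(1))$, which equals the announced two-term asymptotics by definition of $\widehat{\gamma}_{0,\Bb}$.

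\emph{Lower bound.} I would use an $h^{\rho}$-scale partition of unity $(\chi_j)$ with $\sum\chi_j^2=1$ and IMS error $\sum|\nabla\chi_j|^2=O(h^{-2\rho})$, splitting $\overline{\Omega}$ into three zones. On the interior cells, a Persson/diamagnetic estimate gives $\geq (1-o(1))h$, comfortably above $\Theta_0 h$. On boundary cells where $|\Bb\cdot\Nb|\geq c>0$ (i.e.\ away from $\Gamma$), the classical half-space Neumann model yields $\geq \Theta_0|\Bb\cdot\Nb|h + O(h^{4/3})\geq \Theta_0 h + c' h^{4/3}$. In a shrinking tubular neighborhood of $\Gamma$ I would perform the same change of variables as for the upper bound and compare, pointwise in $x\in\Gamma$, with the rescaled De Gennes-Montgomery model, obtaining the local bound $\Theta_0 h + \widetilde{\gamma}_{0,\Bb}(x)h^{4/3}(1-o(1))$; taking the infimum over $\Gamma$ produces $\widehat{\gamma}_{0,\Bb}$. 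Assumption (C2) enters precisely here: the finitely many exceptional points where $\Bb$ is tangent to $\Gamma$ are excised into small $h$-dependent caps on which a slightly weaker bound still gives $\Theta_0 h + O(h^{4/3+\eta})$, consistent with the stated remainder.

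\emph{Main obstacle.} The crux is the sharp lower bound inside the tube around $\Gamma$. One must run the rescaling with two incommensurate semi-classical scales, $h^{1/2}$ normal to $\partial\Omega$ and $h^{1/3}$ tangentially, while controlling (i) the metric and curvature errors from the tubular coordinates, (ii) the error in replacing $\Bb\cdot\Nb$ by its linear Taylor expansion across $\Gamma$, and (iii) the cross terms coupling the De Gennes and Montgomery directions. Showing that each of these is $o(h^{4/3})$ requires Agmon-type exponential decay estimates at both scales and a carefully tuned gauge. The explicit coefficient $2^{-2/3}\widehat{\nu}_0\delta_0^{1/3}|\kappa_{n,\Bb}|^{2/3}(1-(1-\delta_0)|\Tb\cdot\Bb|^2)^{1/3}$ then emerges from an internal optimization in the model operator: $\delta_0$ is the optimal share of the kinetic budget assigned to the Montgomery direction, while the factor $1-(1-\delta_0)|\Tb\cdot\Bb|^2$ records how a large tangential component of $\Bb$ along $\Gamma$ partly replaces the Montgomery confinement.
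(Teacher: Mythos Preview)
This theorem is not proved in the present paper: it is quoted as a known result of Helffer--Morame \cite{HelMo4} and Pan \cite{Pan3D}, and the paper's contribution is the extension (Theorem~\ref{thm:main}) to non-constant $\Bb$ with $|\Bb|$ constant. That said, the paper's proof of Theorem~\ref{thm:main} specializes (with $\zeta=0$, $\check\kappa=\kappa_g$) to a proof of the present statement, so one can compare against that.

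Your overall architecture --- quasi-mode upper bound at a minimizer of $\widetilde\gamma_{0,\Bb}$, IMS localization for the lower bound, reduction near $\Gamma$ to a model combining the De Gennes and Montgomery operators at the two scales $h^{1/2}$ and $h^{1/3}$ --- is indeed the strategy of \cite{HelMo4} and of Sections~\ref{sec:lb}--\ref{sec:ub} here. Two points, however, are not right as written.

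First, your half-space estimate ``$\geq \Theta_0|\Bb\cdot\Nb|\,h$'' is incorrect: the half-space Neumann eigenvalue is $\sigma(\nu)h$ with $\sigma(\nu)\geq\Theta_0$ and $\sigma(\nu)-\Theta_0\sim\sqrt{\delta_0}\,|\nu|$ near $\nu=0$ (see Proposition~\ref{prop:gs-hp}); the quantity $\Theta_0|\Bb\cdot\Nb|$ is \emph{below} $\Theta_0$ and cannot yield your conclusion. Relatedly, the zone ``away from $\Gamma$'' cannot be taken at fixed distance $c>0$: the partition must allow cells at distance $\sim h^\delta$ from $\Gamma$ (with $\delta<1/3$), where $|\Bb\cdot\Nb|$ is only of order $h^\delta$. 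The paper handles this intermediate regime separately (Lemma~\ref{lem:10.1*f} and Proposition~\ref{lem:10.1*}), obtaining $\geq(\Theta_0+c_*h^\delta)h$ there, which beats $h^{4/3}$.

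Second, your reading of (C2) is not how the argument actually uses it. No excision of caps is performed; rather, (C2) ensures that the function $x\mapsto\widetilde\gamma_{0,\Bb}(x)$ is well behaved along $\Gamma$ and that the model reduction in the adapted frame (which involves the angle $\theta$ between $\Bb$ and $\Tb$) can be carried out uniformly. The exceptional points where $\Bb\parallel\Tb$ correspond to $\theta=\pm\pi/2$, and the formula for $\widetilde\gamma_{0,\Bb}$ remains finite and positive there; no separate treatment is needed in the lower bound.
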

The aim of  this paper is to  prove that Theorem \ref{thabove} also holds under the weaker assumption that $|\Bb|$ is constant. 

\begin{theorem}\label{thm:main}~\\
Under the assumptions {\rm (C1)-(C2)}, if $|\Bb|$ is constant,  then the asymptotics in \eqref{2Texp} holds for the lowest eigenvalue  $\lambda_1^N(\Ab,h)$.
\end{theorem}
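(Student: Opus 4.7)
My plan is to follow the two-step strategy of Helffer--Morame \cite{HelMo4} and Pan \cite{Pan3D} that established Theorem~\ref{thabove} for constant $\Bb$, and to show that the variation of $\Bb$ (constrained by $|\Bb| \equiv 1$) only affects the remainder at order $\mathcal{O}(h^{4/3+\eta})$. Thus the same three model operators --- the harmonic oscillator, the De Gennes operator, and the Montgomery operator --- govern the leading asymptotics, and the local energy at a point $x \in \Gamma$ is $\Theta_0 h + \widetilde{\gamma}_{0,\Bb}(x)\, h^{4/3}$, with $\widetilde{\gamma}_{0,\Bb}(x)$ depending only on the value of $\Bb$ at $x$ and its first tangential derivative along $\Gamma$. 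Matching upper and lower bounds then produce $\widehat{\gamma}_{0,\Bb} = \inf_{x \in \Gamma} \widetilde{\gamma}_{0,\Bb}(x)$ as the second-term coefficient.

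\textbf{Upper bound.}
I would pick a minimizer $x_0 \in \Gamma$ of $\widetilde{\gamma}_{0,\Bb}$ and set up adapted boundary coordinates $(s,t,\tau)$ near $x_0$, with $s$ an arc-length on $\Gamma$, $t$ an arc-length on $\partial\Omega$ transverse to $\Gamma$, and $\tau$ the distance to $\partial\Omega$. A gauge transformation puts $\Ab$ in a form whose leading part is built from $\Bb(x_0)$ and the first tangential derivatives of $\Bb$ along $\Gamma$; thanks to the constraint $\Bb \cdot d\Bb = 0$ enforced by $|\Bb| = 1$, these derivatives are captured precisely by $\kappa_{n,\Bb}(x_0)$ and $\Tb(x_0) \cdot \Bb(x_0)$. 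After the Montgomery-type rescaling $s = h^{1/3}s'$, $t = h^{1/2}t'$, $\tau = h^{1/2}\tau'$, the rescaled operator decouples at leading order into a tensor sum of the three model operators, whose ground state product gives a quasi-mode with Rayleigh quotient $\Theta_0 h + \widetilde{\gamma}_{0,\Bb}(x_0)\, h^{4/3} + \mathcal{O}(h^{4/3+\eta})$.

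\textbf{Lower bound.}
For the lower bound I would IMS-partition $\overline{\Omega}$ into: (i) the interior, (ii) a neighborhood of $\partial\Omega \setminus \Gamma$, and (iii) a tubular neighborhood of $\Gamma$. Since $|\Bb| \equiv 1$, the standard arguments --- which in the constant-$\Bb$ setting used only the modulus of the field for the leading terms --- give $\lambda \geq (1-o(1))h$ on (i) and $\lambda \geq (\Theta_0 - o(1))h$ on (ii). On (iii) I would further partition into boxes of size $h^{1/3-\varepsilon}$ along $\Gamma$; on each box centered at $x_c$, freeze $\Bb$ at its first-order Taylor expansion at $x_c$ and compare with the corresponding model operator, whose bottom eigenvalue is $\Theta_0 h + \widetilde{\gamma}_{0,\Bb}(x_c)\, h^{4/3} + \mathcal{O}(h^{4/3+\eta})$. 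Combining via the IMS identity and taking the infimum over $x_c \in \Gamma$ matches the upper bound.

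\textbf{Main obstacle.}
The hard part is controlling the freezing error in (iii): the perturbation of the quadratic form coming from the difference between the true $\Ab$ and the linearized model must be shown to be $o(h^{4/3})$. Here the constraint $|\Bb|^2 \equiv 1$ is essential because it forces $\Bb \cdot d\Bb \equiv 0$, eliminating first-order terms that would otherwise spoil the formula for $\widetilde{\gamma}_{0,\Bb}$ in \eqref{eq:tilde-gam}. Bounding the residual error rigorously will require Agmon-type exponential decay estimates for the (quasi-)eigenfunctions away from the minimizing set of $\widetilde{\gamma}_{0,\Bb}$, together with a careful balance of the freezing scale $h^{1/3-\varepsilon}$ against the Montgomery rescaling, so that the remainder has order strictly smaller than $h^{4/3}$.
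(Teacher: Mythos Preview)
Your global strategy---upper bound by a quasi-mode at a minimizer of $\widetilde\gamma_{0,\Bb}$, lower bound by IMS localization near $\Gamma$ and comparison with a model operator in each cell---matches the paper. But there is a genuine gap in your treatment of the ``freezing'' step for the lower bound, and it is exactly the point where the paper does real work beyond \cite{HelMo4}.

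You write that the constraint $|\Bb|\equiv 1$ ``eliminates first-order terms that would otherwise spoil the formula for $\widetilde\gamma_{0,\Bb}$''. This is not what happens. When one Taylor-expands $\Bb$ to first order at $x_c\in\Gamma$ and uses $|\Bb|=1$, the resulting normal form of the magnetic potential (the paper's Lemma~\ref{lem:normal-form}) still carries two extra first-order coefficients, called $\zeta$ and $\check\kappa$ in the paper, which are \emph{zero only in the constant-field case}. The model operator on a half-space cell is therefore not the Helffer--Morame model but the genuinely new operator
\[
P^{h,\eta,\zeta}_{0;\gamma,\theta}=\bigl(hD_r-\sin\theta\,t-\cos\theta\,(\eta s+\zeta r)t\bigr)^2+\bigl(hD_s+\cos\theta\,t-\sin\theta\,(\eta s+\zeta r)t+\tfrac{\gamma}{2}r^2\bigr)^2+h^2D_t^2,
\]
with $\eta=\check\kappa-\kappa_g(x_c)$. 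The presence of $\eta s$ destroys the partial Fourier transform in $s$ that underlies the constant-field analysis, so one cannot simply quote the known bottom eigenvalue. The paper's key new step (Section~\ref{sec:Nmod}, Proposition~\ref{lemestmodzone2bis}) is a perturbation argument: after the scaling $(r,s,t)=(h^{1/3}\hat r,h^{1/3}\hat s,h^{1/2}\hat t)$ one writes $L_2^{h,\eta,\zeta}=L_2-h^{1/6}(\zeta r+\eta s)t$ and compares with a shifted version $P^h_{1;\gamma,\theta,\tau}$ of the constant-field model, using the support restriction $|r|,|s|\lesssim h^{\delta-1/3}$ to absorb the extra term into an error of order $h^{2\delta-\tau}\|tu\|^2$, which the Agmon decay in $t$ controls. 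Without this argument you cannot assert that the local model has bottom $\Theta_0 h+\widetilde\gamma_{0,\Bb}(x_c)h^{4/3}$.

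A parallel issue appears in the upper bound: plugging the Helffer--Morame quasi-mode into the quadratic form produces cross terms involving $\zeta$ (the quantities $A(v)+B(v)$ in Section~\ref{sec:en-qm}), and one must check explicitly that these are $\mathcal O(h^{3/2})$; they do not vanish by the constraint alone. Finally, note that your rescaling has the transverse-to-$\Gamma$ boundary coordinate at scale $h^{1/2}$; it should be $h^{1/3}$, matching the Montgomery variable (see \eqref{newscaling}).
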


An interesting example of a non-constant magnetic field but with a constant intensity is the helical magnetic field occurring in the theory  of liquid crystals.   Up to the action of an orthogonal matrix,  it can be expressed as follows \cite{Pan8}
\begin{equation}\label{eq:B=n-tau}
\Bb=\curl\nb_\tau=-\tau\nb_\tau,\quad  \nb_\tau= \Big(\frac1\tau\cos (\tau x_3), \frac1\tau\sin (\tau x_3),0\Big).
\end{equation}
Here  $\tau>0$  is a given constant.   In this situation ($\Bb=-\tau\nb_\tau$), \cite{Pan8} derived  an upper bound on the eigenvalue $\lambda_1^N(\Ab,h)$, which is consistent with Theorem~\ref{thm:main}. Our  contribution is valid for a more general class of magnetic fields with constant intensity and also determines the asymptotically matching lower bound of the lowest eigenvalue.

\subsection*{Discussion and applications}
 The inspection of the eigenvalue $\lambda_1^N(\Ab,h)$ is vital in understanding the  transition between \emph{superconducting} and \emph{normal} states in the  Ginzburg-Landau model \cite{FoHe2}. In this context, the magnetic field is typically constant. Accurate estimates of the lowest eigenvalue $\lambda_1^N(\Ab,h)$ under constant magnetic fields  \cite{HelMo3,  HelMo4}  led to a precise understanding of the transition between superconducting and normal states \cite{FoHe1, FP}.

Non-homogeneous magnetic fields with constant intensity are encountered in the Landau--de\,Gennes theory  of liquid crystals, which is the analog of the  Ginzburg-Landau  theory  of  superconductivity. Here a  transition between \emph{smectic} and  \emph{nematic} phases occurs. Our main  result, Theorem~\ref{thm:main}, yields an accurate estimate of  the lowest eigenvalue $\lambda_1^N(\Ab,h)$ for magnetic fields with constant intensity, and by  analogy with \cite{FoHe1}, we expect it to yield a precise description of  the transition between surface smectic and nematic states (see \cite{Panlc}).

At  the threshold of the phase transition,  both superconductive and smectic states  nucleate  on the surface of the domain (near the curve $\Gamma$ introduced in \eqref{eq:Gam}). The paper \cite{Pan7}  contains  a nice discussion of this  interesting  analogy. The analysis of 3D surface superconductivity is the subject of  the  papers \cite{Pan3D,  FKP1, FMP}, while  surface smectics are rigorously studied in \cite{HePa2, FKP}. It  would be interesting to complete this analysis by  providing more accurate estimates at  the  threshold, where the linear analysis (such as the one in this paper) becomes handy.

The analysis in  this paper concerns the lowest eigenvalue. In the presence of a constant magnetic field,  and a ``single well'' assumption (i.e. the minimum in \eqref{eq:hat-gam} is non-degenerate and attained at a unique point),   accurate estimates of the low-lying eigenvalues 
were obtained recently in \cite{HR}. In our setting of a non-homogeneous magnetic field, the example of the ball under the helical magnetic field suggests the presence of multiple wells (see Remark~\ref{rem:tunneling}). 

The interaction between magnetic fields and 3D domains is interesting in other situations. In particular, for the Robin problem, we observe pure magnetic wells on the  surface of the domain  \cite{HKR}, and in  the case of a constant magnetic field, strong diamgnetism does not hold for the ball \cite{M}.

\subsection*{Organization and outline of the proof}

The proof of  Theorem~\ref{thm:main} is split into two parts. In the first part, we  establish  a lower bound
of the lowest eigenvalue, by comparing the quadratic form  via a simpler form related to a new  model operator. Comparing with  the constant magnetic field in \cite{HelMo4}, we prove that the model operator in  our setting is a  perturbation  of  the one considered  in \cite{HelMo4}.

The second part of the proof  is devoted to an upper bound of the lowest eigenvalue,   already studied  for $\Bb$ in \eqref{eq:B=n-tau}   \cite{Pan8}, but we revisit it since our formulation is not the same as  \cite{Pan8}.  The upper  bound follows after computing the quadratic form  of a suitable  trial  state, having the same structure as the constant magnetic field case in \cite{HelMo4, Pan3D}. However, there are additional terms in the computations due to the varying magnetic field,  which require a careful handling. 

The model operator takes into consideration two phenomena. First, after decomposing our domain into small  cells and working in a  small  cell near the domain's boundary, we have to express the integrals in  a flat  geometry, which  requires  a careful expansion   of the  Riemannian metric  in particular. This part is  essentially  the same as for the constant magnetic field case in \cite{HelMo4}.

Then, we have to express the magnetic potential in adapted coordinates, in each small cell, and apply a Taylor expansion  and  a  gauge transformation to obtain a ``normal'' form, i.e. a simpler effective magnetic potential. In  this part, we deviate from the  constant magnetic field situation and find additional terms in the effective magnetic potential. Interestingly, we can still show that the analysis with  this magnetic potential is somehow independent of  those additional terms and treat the new model  as a perturbation of the  model with a constant magnetic field.

The paper is organized as follows. In Section~\ref{sec:coordinates} we introduce the adapted coordinates in  a small  ``boundary'' cell. In Section~\ref{sec:helix}, we analyze the case of the unit ball  with the ``helical'' magnetic field occurring in  liquid crystals and verify that  Assumptions~\ref{ass:C1}  and \ref{ass:C2} hold.  Interestingly, after computing the energy in \eqref{eq:hat-gam}, we notice that this example shows  a  phenomenon of  multiple ``wells'' induced by the ``magnetic'' geometry. 

In Section~\ref{sec:1Dmod}, we review two standard 1D operators that we need in defining the quantities appearing in \eqref{eq:hat-gam} and the statement in  Theorem~\ref{thm:main}. Then, in Section~\ref{sec:Nmod}, we introduce a new model, specific to our case of a varying magnetic  field with a constant intensity, and analyze it through a perturbation argument. 

With the model in Section~\ref{sec:Nmod}, we can adjust  the proof in \cite{HelMo4} and prove Theorem~\ref{thm:main}. The first step is to localize the ground states near the boundary, which is the  content of Section~\ref{sec:loc}. Then, the approximation of the quadratic form  and the magnetic potential  are the subject of Section~\ref{sec:qf}, which allows us, in the subsequent Section~\ref{sec:lb}, to obtain a lower bound on  the lowest eigenvalue.

Finally, Section~\ref{sec:ub} is devoted to the computation of the energy of a trial state, which yields an upper  bound of the lowest eigenvalue, and thereby completes  the proof of  Theorem~\ref{thm:main}.
 
\section{Adapted coordinates}\label{sec:coordinates}
We recall a rather standard choice of coordinates in the neighborhood of $\Gamma$.
\subsection{Description of  the coordinates}\label{sbsec:coord}

Let $g_0$ be the Riemannian metric on $\R^3$, which induces a Riemanian  metric $G$ on $\partial\Omega$. Given two  vector fields $\mathbf X,\mathbf Y$ of $\R^3$, we denote by
\begin{equation}\label{eq:not-metric}
\mathbf X\cdot\mathbf Y=\langle \mathbf X,\mathbf Y\rangle:=g_0(\mathbf X,\mathbf Y)\,.
\end{equation}
Consider a direct frame  $(\mathbf V(x),\mathbf T(x),\mathbf  N(x) )_{x\in\Gamma}$ along $\Gamma$ such that
\begin{itemize}
\item $\mathbf T(x)$ is an oriented unit tangent vector of $\Gamma$\,;
\item $\mathbf V(x):=\mathbf T(x)\times \mathbf N(x)$,  hence determining an oriented normal to the curve $\Gamma$ in the tangent space to $\partial \Omega$.
\end{itemize}
For $m\in\Gamma$, let $\Lambda_{m}$ be the geodesic that passes through $m$  and is normal to $\Gamma$. 
Let $x_0\in\Gamma$. In some neighborhood $\mathcal N_{x_0}\subset\overline\Omega$ of  $x_0$, we can introduce new coordinates $(r,s,t)$ as follows:
\begin{itemize}
\item For $x\in\mathcal N_{x_0}$, $p(x)\in\partial\Omega$ is defined by  ${\rm dist}(x,p(x))=t(x):={\rm dist}(x,\partial\Omega)$;
\item For $x\in\mathcal N_{x_0}$,  $\gamma(x)\in\Gamma$ is defined by ${\rm dist}_{\partial\Omega}(p(x),\gamma(x))=\dist_{\partial\Omega}(p(x),\Gamma)$, where ${\rm dist}_{\partial\Omega}$  denotes the (geodesic) distance in $\partial\Omega$\,;
\item $\Gamma$ is parameterized by arc-length $s$  so that $s=s_0$ defines  $x_0$, and for $x\in\mathcal N_{x_0}$,  $s=s(x)$ defines $\gamma(x)$\,;
\item For $x\in\mathcal N_{x_0}$, the geodesic $\Lambda_{p(x)}$ passing through $p(x)$ is  parameterized by arclength  $r$, so  that $r=0$   defines $\gamma(x)$ and $r=r(x)$ defines $p(x)$.
\end{itemize} 
In this way, we observe that $\Phi_{x_0}$
\begin{equation}\label{eq:Phi-x0}
 \mathcal N_{x_0}\ni  x\mapsto \Phi_{x_0} (x):= (r(x),s(x),t(x))\in \R\times\R\times\R_+
\end{equation} is a local diffeomorphism. Thus, we can  pick a  sufficiently small $\epsilon_0>0$ such  that
\begin{equation}\label{eq:Phi-1-x0} 
(r,s,t)\in (-\epsilon_0,\epsilon_0)\times(-\epsilon_0+s_0,s_0+\epsilon_0)\times(0,\epsilon_0)
\to x=\Phi_{x_0}^{-1}(r,s,t)
\end{equation}
is a diffeomorphism, whose image is   a neighborhood of $x_0\in\Gamma$ 
parameterized by $(r,s,t)$. Within these coordinates, $t=0$ means that we are on $\partial\Omega$, and $r=t=0$ means we are on the curve $\Gamma$. We can then compute 
\begin{equation}\label{eq:kappa-n-B}
|d^T(\mathbf B\cdot \mathbf N) (x)|=|\partial_r(\mathbf B\cdot\mathbf N)|_{r=0} |\quad(x\in\Gamma)\,.
\end{equation} 
It is convenient to express  the magnetic field along $\Gamma$ as follows
\begin{equation}\label{eq:B-r,t=0}
\Bb(x)= \sin\theta \, \Tb(x) + \cos\theta \, \Vb(x)\quad\big(x=\Phi_{x_0}^{-1}(0,s,0)\in\Gamma\big),
\end{equation}
where  $\theta=\theta(s)\in[-\frac\pi2,\frac\pi2]$  is  the angle defined by
\begin{equation}\label{eq:theta-r,t=0}
\theta=\arcsin\big(\Bb(x)\cdot\Tb(x)\big).
\end{equation}
\subsection{The metric in the new coordinates}\label{sbsec:metric}

Let us  consider an arbitrary point $x_0\in\Gamma$ and a neighborhood $\mathcal N_{x_0}\subset\overline{\Omega}$ of $x_0$ such that the adapted coordinates introduced in \eqref{eq:Phi-x0} and \eqref{eq:Phi-1-x0}  are valid. Modulo a translation, we can center the coordinates at  $x_0$ so that $(r=0,s=0,t=0)$ are the coordinates of $x_0$ in the new frame.  In the sequel, we follow closely the presentation of \cite[Sec.~8]{HelMo4} mainly following  the first chapter of \cite{DHKW} (see also 
the volume two of  Spivak's  book \cite{Sp}).

We label the new coordinates as follows
\begin{equation}\label{eq:label(r,s,t)}
(y_1,y_2,y_3)=(r,s,t)\,,
\end{equation}
and  the Riemanian metric $g_0$ becomes \cite[Eq. (8.26)]{HelMo4}
\begin{equation}\label{eq:g0-y}
g_0=dy_3\otimes dy_3+\sum_{1\leq i,j\leq 2}\big[G_{ij}-2y_3K_{ij}+y_3^2L_{ij} \big]dy_{i}\otimes dy_j
\end{equation}
where: 
\begin{itemize}
\item $G:= \sum\limits_{1\leq i,j\leq 2}G_{ij} dy_{i}\otimes dy_j$ is the first fundamental form on $\partial\Omega$\,;
\item $K:= \sum\limits_{1\leq i,j\leq 2}K_{ij} dy_{i}\otimes dy_j$ is the second fundamental form on $\partial\Omega$\,;
\item  $L:= \sum\limits_{1\leq i,j\leq 2}L_{ij} dy_{i}\otimes dy_j$ is the third fundamental form on $\partial\Omega$\,.
\end{itemize}
 The matrix $g$ of  the metric $g_0$ takes the form 
\begin{equation}\label{eq:g-ij}
g:=(g_{ij})_{1\leq i,j\leq 3}=\left(\begin{array}{lll} g_{11}&g_{12}&0\\
g_{21}&g_{22}&0\\
0&0&1 \end{array}\right)
\end{equation}
whose inverse is
\begin{equation}\label{eq:g-ij-inverse}
g^{-1}=(g^{ij})_{1\leq i,j\leq 3}=\left(\begin{array}{lll} g^{11}&g^{12}&0\\
g^{21}&g^{22}&0\\
0&0&1 \end{array}\right)\,.
\end{equation}
We will  express these matrices in a more pleasant form involving, in particular, the curvatures on the boundary. To that end, let $s\mapsto \gamma(s)$ be an arc-length parameterization of $\Gamma$ near $x_0$, so that $|\dot\gamma(s)|=1$, $\gamma(0)=x_0$ and $\mathbf T(\gamma(s))=\dot\gamma (s)$. We can introduce the \emph{geodesic} and \emph{normal} curvatures at $\gamma(s)$,  $\kappa_g(\gamma(s))$ and $\kappa_n(\gamma(s))$,  as follows 
\begin{equation}\label{eq:kg-kn}
\ddot\gamma(s)=-\kappa_g(\gamma(s))\mathbf V(\gamma(s)) +\kappa_n(\gamma(s))\mathbf  N(\gamma(s))\,.
\end{equation}
The choice of our coordinates $(r,s)$ ensures that  the metric $G$ is diagonal  on $\partial\Omega$ \cite[Lem.~8.2]{HelMo4}
\begin{equation}\label{eq:alpha(r,s)}
G=dr\otimes dr+\alpha(r,s)ds\otimes ds,
\end{equation}
with 
\begin{equation}\label{eq:alpha(r,s)*}
\alpha (r,s) = 1- 2\kappa_g\big(\gamma(s)\big)  r +\Og(r^2)\;,\quad\alpha(0,s)=1\,,
\end{equation}
and
\begin{equation}\label{eq:alpha(r,s)**}
\frac{\partial\alpha}{\partial s} (0,s) = 0\;.
\end{equation}
Then, with \eqref{eq:label(r,s,t)}, we have  for the determinant of the matrix of $g$  (see \cite[Eq. (8.29) \&  (8.30)]{HelMo4}),
\begin{equation}\label{eq:det-g}
|g|=\alpha(r,s)-2t\big[ \alpha(r,s)K_{11}(r,s) +K_{22}(r,s)\big]+t^2\varepsilon_3(r,s,t)\,,
\end{equation}
and
\begin{equation}\label{eq:inv-g}
(g^{ij})_{1\leq i,j\leq 2}=\left(\begin{array}{ll}  1&0\\
0&\alpha^{-1}(r,s) \end{array}\right)+2t \left(\begin{array}{ll}  K_{11}(r,s)&\alpha^{-1} K_{12}(r,s)\\
\alpha^{-1} K_{21}(r,s)&\alpha^{-2}K_{22}(r,s) \end{array}\right)+t^2R\,,
\end{equation}
where $\varepsilon_3$ and $R$ are smooth functions.

\subsection{The operator and quadratic form}\label{sbsec:op-qf}

We continue to work in the setting of Subsection~\ref{sbsec:metric} . We introduce the following neighborhood of $x_0$
\begin{equation}\label{eq:V-x0}
V_{x_0}=\Phi_{x_0}^{-1}(\tilde V_{x_0})\,,
\end{equation} 
where (recall \eqref{eq:label(r,s,t)})
\begin{equation}\label{eq:tilde-V-x0}
\tilde V_{x_0}=  \{(y_1,y_2,y_3)\in (-\epsilon_0,\epsilon_0)\times(-\epsilon_0,\epsilon_0)\times(0,\epsilon_0)\}\,.
\end{equation}
Given a function $u:V_{x_0}\to\C$, we assign to it the function $\tilde u:V_{x_0}\to\C$ defined by
\begin{equation}\label{eq:tilde-u}
\tilde u(y_1,y_2,y_3)=u(\Phi_{x_0}^{-1}(y_1,y_2,y_3))\,.
\end{equation}
By  the considerations in Subsection~\ref{sbsec:metric}  on  the Riemanian metric, if $u\in L^2(V_{x_0},dx)$, then $\tilde u
\in L^2(\tilde V_{x_0}, |g|^{1/2}dy)$ and
\begin{equation}\label{eq:norm-g}
\int_{V_{x_0}}|u(x)|^2dx=\int_{\tilde V_{x_0}}|\tilde u(y)|^2|g|^{1/2}dy\,.
\end{equation}
Moreover, assuming $u$ supported in $V_{x_0}$, we have the quadratic form formula \cite[Eq. (8.27)]{HelMo4} 
\begin{equation}\label{eq:qf-y}
\begin{aligned}
q_{\Ab}^h(u)&:=\int_{V_{x_0}}|(h\nabla-i\Ab)u|^2dx\\
&=\int_{\tilde V_{x_0}} \Big[|(hD_{y_3}-\tilde A_3)\tilde u|^2+\sum_{1\leq i,j\leq 2}g^{ij}(hD_{y_i}-\tilde A_i)\tilde u\cdot\overline{(hD_{y_j}-\tilde A_j)\tilde u} \Big]|\, g|^{1/2}dy
\end{aligned}
\end{equation}
where the new magnetic  potential $\tilde\Ab=(\tilde A_1,\tilde A_2,\tilde A_3)$ is assigned to $\Ab=(A_1,A_2,A_3)$ by the relation
\begin{equation}\label{eq:tA}
A_1dx_1+A_2dx_2+A_3dx_3=\tilde A_1dy_1+\tilde A_2 dy_2+\tilde A_3dy_3\,,
\end{equation}
and after performing a (local) gauge transformation,  we may  assume that 
\begin{equation}\label{eq:A3=0}
\tilde A_3=0\,.
\end{equation} 
The operator $P_{\Ab}^h$ in \eqref{eq:op} can be expressed in the new coordinates as follows \cite[Eq.~(8.28)]{HelMo4}
\begin{multline}\label{eq:op-y}
P_{\Ab}^h=(hD_{y_3}-\tilde A_3)^2+\frac{h}{2i}|g|^{-1}
\frac{\partial}{\partial y_3}
|g|(hD_{y_3} - \tilde A_3)\\
+ |g|^{-1/2}
\sum_{1\leq i,j\leq 2}
(hD_{y_j} 
- \tilde A_j)|g|^{1/2}g^{ij}   (hD_{y_i} 
- \tilde A_i)\,.
\end{multline}
\section{Helical magnetic fields}\label{sec:helix}
\subsection{Preliminaries}
Let $\tau>0$ and consider the magnetic potential
\begin{equation}\label{eq:N-tau}
 \Ab(x)=\nb_\tau(x):=\Big(\frac1\tau\cos(\tau x_3),\frac1\tau\sin(\tau x_3),0\Big)\,,
\end{equation}
which generates the  magnetic field
\begin{equation}\label{eq:curl-N-tau}
\Bb(x)=\curl\Ab(x)=-\tau \Ab(x)
\end{equation}
with constant intensity
\begin{equation}\label{eq:|B|=tau}
 |\Bb(x)|= 1\,.
\end{equation}
We will verify  that Assumptions C1-C2 hold for this particular magnetic field in  the case where $\Omega$ is the unit ball.  In particular, with in mind that 
 $\widehat\gamma_{0,\Bb}$ and $\widetilde\gamma_{0,\Bb}$ are introduced in \eqref{eq:hat-gam} and \eqref{eq:tilde-gam} respectively and that  $\delta_0\in]0,1[$ and  $\widehat \nu_0>0$  will be introduced in \eqref{eq:Th0} and in \eqref{eq:Mont} (there is no need in this subsection to know more about them) 
  we will find that
\begin{equation}\label{eq:min-eff-en}
\widehat{\gamma}_{0,\Bb}=2^{-2/3} \widehat{\nu}_0 \delta_0^{1/3} C(\tau,\delta_0) \,,
\end{equation}
and for $\tau\leq \tau_0$, the equality,  
\begin{equation}\label{eq:min-eff-en*}
\{x\in\Gamma~|~ \widetilde{\gamma}_{0,\Bb}(x)=2^{-2/3} \widehat{\nu}_0 \delta_0^{1/3} \}=\{(0,\pm 1,0)\}\,,
\end{equation}
   where $\tau_0$ is a  constant and $C(\tau,\delta_0)$ is explicitly computed (see Proposition~\ref{prop:min-example}).
 
The inward normal of $\Omega=\{x\in\R^3~|~|x|<1\}$ along $\partial\Omega$ is \begin{equation}\label{eq:N}
\Nb(x)=-x\quad(|x|=1)\,.
\end{equation}
 The restriction of the magnetic field $\Bb$ to the boundary is then tangent to $\partial\Omega$ on the following set
\begin{equation}\label{eq:Gam}  
\Gamma=\{x\in\partial\Omega~|~x\cdot \Ab(x)=0\}\,.
\end{equation}

\subsection{$\Gamma$  is a regular curve}
For $|x|=1$, the equation $x\cdot \Ab(x)=0$ reads as follows
\begin{equation}\label{eq:Gam*} 
x_1\cos(\tau x_3)+x_2\sin(\tau x_3)=0\,.
\end{equation}

\begin{proposition}\label{prop:Gam-rc}
The set $\Gamma$ introduced in \eqref{eq:Gam} is a $C^\infty$ regular curve.
\end{proposition}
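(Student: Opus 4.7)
The plan is to realise $\Gamma$ as a transverse intersection of two smooth hypersurfaces in $\R^3$ and apply the regular value theorem. Since $\Omega$ is the unit ball and $\Nb(x) = -x$, the defining condition $\Bb\cdot\Nb = 0$ becomes, using $\Bb = -\tau\Ab$ from \eqref{eq:curl-N-tau} and the explicit form of $\Ab$ in \eqref{eq:N-tau}, the single smooth equation
$$F_2(x) := x_1\cos(\tau x_3) + x_2\sin(\tau x_3) = 0$$
on the unit sphere. Setting $F_1(x) := x_1^2 + x_2^2 + x_3^2 - 1$, we then have $\Gamma = F^{-1}(0)$ for the smooth map $F = (F_1,F_2) : \R^3 \to \R^2$, and it suffices to show that $\nabla F_1(x)$ and $\nabla F_2(x)$ are linearly independent at every $x\in\Gamma$.

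The main step is a short algebraic check. One computes
$$\nabla F_1 = 2(x_1,x_2,x_3), \qquad \nabla F_2 = \bigl(\cos(\tau x_3),\ \sin(\tau x_3),\ \tau(x_2\cos(\tau x_3) - x_1\sin(\tau x_3))\bigr).$$
Assume toward a contradiction that $\nabla F_2(x) = \lambda\,\nabla F_1(x)$ for some $\lambda\in\R$ at a point $x\in\Gamma$. Matching the first two components gives $\cos(\tau x_3) = 2\lambda x_1$ and $\sin(\tau x_3) = 2\lambda x_2$. Squaring and summing yields $1 = 4\lambda^2(x_1^2 + x_2^2)$, so in particular $\lambda\neq 0$ and $x_1^2 + x_2^2 > 0$. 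On the other hand, multiplying those two identities by $x_1$ and $x_2$ respectively and adding gives $2\lambda(x_1^2+x_2^2) = x_1\cos(\tau x_3) + x_2\sin(\tau x_3)$, whose right-hand side vanishes since $x\in\Gamma$. This forces $\lambda(x_1^2+x_2^2)=0$, contradicting the previous line.

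Hence $DF(x)$ has rank two at every point of $\Gamma$. Since $\Gamma$ is moreover non-empty (it contains, for instance, $(0,\pm 1,0)$), the regular value theorem guarantees that $\Gamma$ is a $C^\infty$ embedded one-dimensional submanifold of $\R^3$, i.e.\ a smooth regular curve. I do not anticipate a genuine obstacle here: the only subtle point is that the $\Gamma$-equation itself must be used to reach the contradiction above (the sphere equation alone is not enough), and the apparently degenerate locus $x_1 = x_2 = 0$ is ruled out immediately by the identity $4\lambda^2(x_1^2+x_2^2) = 1$, so no case analysis on the position of $x$ on the sphere is needed.
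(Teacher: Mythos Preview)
Your proof is correct and is genuinely different from the paper's argument. The paper proceeds constructively: it solves the defining equation explicitly in polar-type coordinates on the sphere, producing four explicit charts $\mathbf c_1,\ldots,\mathbf c_4$ that together form an atlas on $\Gamma$ (two charts parameterized by $x_3\in(-1,1)$ covering $\Gamma\setminus\{(0,0,\pm1)\}$, and two charts near the poles parameterized by $\rho=\sqrt{x_1^2+x_2^2}$). Your approach is the abstract one: you write $\Gamma=F^{-1}(0)$ for $F=(F_1,F_2)$ and show $DF$ has full rank everywhere on $\Gamma$, then invoke the regular value theorem. Your linear-algebra contradiction is clean, and you correctly note that the $\Gamma$-equation itself is what closes the argument.

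What each approach buys: yours is shorter and avoids any case analysis near the poles $(0,0,\pm1)$. The paper's explicit parameterizations, however, are not wasted effort: the chart $\mathbf c_1$ and its arc-length reparameterization are used immediately afterwards (Propositions~\ref{prop:C2} and~\ref{prop:C1}) to compute $|\Bb\cdot\Tb|$ and $\kappa_{n,\Bb}$ in closed form along $\Gamma$. So if you adopt your proof here, be aware that you would still need to derive those parameterizations to carry out the subsequent computations in the section.
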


\begin{proof}
 The proof follows by  constructing  an atlas on $\Gamma$, 
\[\{\big(\mathbf c_i,U:=(-1,1)\big),~1\leq i\leq 4 \}\]
which turns $\Gamma$ to a $C^\infty$ regular curve.

 Let us introduce the charts $(\mathbf c_1,U)$ and $(\mathbf c_2,U)$ which cover $\Gamma\setminus\{(0,0,\pm1)\}$. These charts are obtained by expressing  $x_1$  and  $x_2$ in \eqref{eq:Gam*}   in terms of $x_3\in(-1,1)$, provided that $(x_1,x_2,x_3)\not=(0,0,\pm1)$.
We write  for $\alpha \in ]-\pi,\pi]$  
\begin{equation}\label{eq:coord}
x_1=\sqrt{1-x_3^2}\, \cos \alpha,\quad x_2= \sqrt{1-x_3^2}\, \sin \alpha\,.
\end{equation}
Then \eqref{eq:Gam*} becomes, for $x_3^2<1$, 
\begin{equation}\label{eq:cos=0}
 \cos(\tau x_3-\alpha )=0
 \end{equation}
which in turn yields
\[\alpha=\tau x_3-\frac\pi2+k\pi,\quad k\in\mathbb Z\,.\]
In  this way, we get two branches of $\Gamma$  parameterized by $x_3$ and defined as follows 
\[ x_3\in(-1,1)\mapsto\mathbf c_1(x_3):=\left(\begin{array}{c} x_1=\sqrt{1-x_3^2}\, \sin(\tau x_3)\\ x_2=-\sqrt{1-x_3^2}\,\cos(\tau x_3)\\  x_3\end{array}\right)\] 
and
\[x_3\in(-1,1)\mapsto\mathbf c_2(x_3):=\left(\begin{array}{c}x_1=-\sqrt{1-x_3^2}\sin(\tau x_3)\\ x_2=\sqrt{1-x_3^2}\cos(\tau x_3)\\  x_3\end{array}\right)\,.\] 
Both of the foregoing branches represent regular curves.  Furthermore, $\mathbf c_{1}$ and $\mathbf c_2$ can be extended \emph{by continuity} to  the interval $[-1,1]$, yielding a  continuous representation of all $\Gamma$. 

Now  we introduce the charts $(\mathbf c_3,U)$ and $\mathbf c_4,U)$ that cover the points $(0,0,\pm 1)$.  In a neighborhood of $(x_1,x_2,x_3)=(0,0,\pm 1)$,   we parameterize   a branch of $\Gamma$  with respect to $\rho :=\sqrt{x_1^2+x_2^2}$  as follows
\[x_1=\rho\cos \alpha \,,~x_2=\rho\sin \alpha\, ,~x_3=\sqrt{1-\rho^2}\,.\]
With   this in hand,  \eqref{eq:cos=0} continues to hold for $x_3\not=0$ and we can write again  $\alpha=\tau x_3-\frac\pi2+k\pi$ for some $k\in\mathbb Z$.  Consequently, we get  two regular branches of $\Gamma$ defined as follows
\[\rho\in(-1,1)\mapsto\mathbf c_3(\rho):=\left(\begin{array}{rl}  x_1&=\rho\sin(\tau\sqrt{1-\rho^2})\\ x_2&= -\rho\cos(\tau\sqrt{1-\rho^2})\\ x_3&=\sqrt{1-\rho^2}\end{array}\right)\]
and
\[\rho\in(-1,1)\mapsto\mathbf c_4(\rho):=\left(\begin{array}{rl}  x_1&=-\rho\sin(\tau\sqrt{1-\rho^2})\\ x_2&= \rho\cos(\tau\sqrt{1-\rho^2})\\ x_3&=\sqrt{1-\rho^2}\end{array}\right)\,.\] 
\end{proof}

\subsection{Explicit formulas in adapted coordinates}

Note  that $\mathbf c:=\mathbf c_1$ and $\mathbf c_2$ parameterize all of  $ \Gamma\setminus\{(0,0,\pm1)\}$. 
 By symmetry considerations, we will compute, on $\mathbf c\big(\,(-1,1)\,\big)$ only, 
\begin{equation}\label{eq:d(BN)}
 |d^T(\mathbf B\cdot \mathbf N)|=\tau |d^T(\mathbf A\cdot \mathbf N)|\quad{\rm and}\quad   |\mathbf B\cdot \mathbf T|=\tau  |\mathbf A\cdot \mathbf T|\,.
\end{equation}
First we note that $\mathbf N=-x$ on $\partial\Omega$ and introduce the arc-length parameter 
\[s(x_3) =\int_0^{x_3}|\mathbf c'(\tilde x_3)|d\tilde x_3\] of $x_3\mapsto\mathbf c(x_3)$, which satisfies
\begin{equation}\label{eq:ds/dx}
 s'(x_3)=|\mathbf c'(x_3)|=\sqrt{\frac{1+\tau^2(1-x_3^2)^2}{1-x_3^2}}\,.
\end{equation}
 Clearly, $x_3\in(-1,1)$ can be expressed in terms of the arc-length parameter as $x_3=x_3(s)$  
with 
\begin{equation}\label{eq:dx/ds}
 m(x_3):=\frac{dx_3}{ds}(s(x_3)) = \sqrt{\frac{1-x_3^2}{1+\tau^2(1-x_3^2)^2}}\,.
 \end{equation}
 The arc-length parameterization is now given by
 \begin{equation}\label{eq:c(s)}
\gamma(s):=\mathbf c( x_3(s) )\,,
\end{equation}
and consequently,  with $\mathbf c=\mathbf c_1$, we have
\begin{equation}\label{eq:N,T-c}
\mathbf N(\gamma(s))=-\mathbf \gamma(s)=\left(\begin{array}{c}- \sqrt{1-x_3^2}\sin(\tau x_3)\\ \sqrt{1-x_3^2}\cos(\tau x_3)\\  - x_3\end{array}\right) ~{\rm with ~}x_3=x_3(s)\,,\end{equation}
 and
\begin{align*} \mathbf T(\gamma(s))&=\frac{d}{ds}\gamma(s)=\left(\begin{array}{c}T_1\\T_2\\T_3\end{array}\right)\\
&=m(x_3)\left(\begin{array}{c}-\frac{x_3\sin(\tau x_3)}{\sqrt{1-x_3^2}} +\tau\sqrt{1-x_3^2}\cos(\tau x_3)\\  \frac{x_3\cos(\tau x_3)}{\sqrt{1-x_3^2}} +\tau\sqrt{1-x_3^2}\sin(\tau x_3)\\  1\end{array}\right) \,.
\end{align*}
We also introduce the normal  vector to $\Gamma$ on $\gamma(s)$,
 \begin{align*}
 \mathbf V(\gamma(s))&=\mathbf T(\gamma(s))\times\mathbf N(\gamma(s))=\left(\begin{array}{c}V_1\\V_2\\V_3\end{array}\right)\\
 &=  m(x_3)\left(\begin{array}{c}-\frac{x_3^2\cos(\tau x_3)}{\sqrt{1-x_3^2}} -\tau x_3\sqrt{1-x_3^2}\sin(\tau x_3) -\sqrt{1-x_3^2}\cos(\tau x_3)\\  -\frac{x_3^2\sin(\tau x_3)}{\sqrt{1-x_3^2}} +\tau x_3\sqrt{1-x_3^2}\cos(\tau x_3) -\sqrt{1-x_3^2}\sin(\tau x_3)\\  \tau(1-x_3^2)\end{array}\right) \,.
\end{align*}
We are  now ready to prove that our  magnetic field $\Bb$ verifies the condition (C2) appearing in Assumption~\ref{ass:C2}  
\begin{proposition}\label{prop:C2}
Let  $\Bb$ be the magnetic field introduced in \eqref{eq:curl-N-tau}. For all $x\in \Gamma$, we have 
\[|\Bb(x)\cdot\mathbf T(x)|= \frac{\tau(1-x_3^2)}{\sqrt{1+\tau^2(1-x_3^2)^2}}\,.\] In particular, $\Bb$   satisfies the condition $\rm(C2)$.
\end{proposition}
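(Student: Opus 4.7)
The plan is to compute $\Bb(x)\cdot\mathbf T(x)$ directly using the explicit formulas already derived in the preceding subsection, then inspect the zero set to obtain condition (C2).

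First, since $\Bb=-\tau\,\nb_\tau$, we have on $\partial\Omega$ the explicit expression $\Bb(x)=\bigl(-\cos(\tau x_3),-\sin(\tau x_3),0\bigr)$. Taking the inner product with the formula for $\mathbf T(\gamma(s))$ given just before the statement, I would form
\begin{equation*}
\Bb\cdot\mathbf T = m(x_3)\Bigl[-\cos(\tau x_3)\,T_1^\sharp-\sin(\tau x_3)\,T_2^\sharp\Bigr],
\end{equation*}
where $T_1^\sharp$ and $T_2^\sharp$ denote the first two bracketed components of $\mathbf T/m(x_3)$. Expanding the brackets, the cross terms proportional to $\dfrac{x_3\sin(\tau x_3)\cos(\tau x_3)}{\sqrt{1-x_3^2}}$ cancel exactly, and the remaining terms collapse via $\cos^2(\tau x_3)+\sin^2(\tau x_3)=1$, leaving $-\tau\sqrt{1-x_3^2}$. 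Multiplying by $m(x_3)=\sqrt{(1-x_3^2)/(1+\tau^2(1-x_3^2)^2)}$ and taking absolute values yields the claimed formula.

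For condition (C2), I would observe that on the portion of $\Gamma$ covered by the chart $\mathbf c=\mathbf c_1$, the right-hand side vanishes iff $x_3=\pm 1$. By the symmetry $\mathbf c_2=-\mathbf c_1$ (which only flips signs of the first two components), the analogous computation on the $\mathbf c_2$-branch gives the same expression, so the same conclusion holds. Finally, the two points $(0,0,\pm 1)$ not covered by $\mathbf c_1,\mathbf c_2$ are reached by the charts $\mathbf c_3,\mathbf c_4$ at $\rho=0$; by continuity (or by a direct analogous computation using those charts, where $|\Bb\cdot\mathbf T|$ is a smooth function on $\Gamma$), the same identity extends and shows that $\Bb$ is tangent to $\Gamma$ at $(0,0,\pm 1)$. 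Thus the set of tangency points is exactly $\{(0,0,\pm 1)\}$, which is finite, proving (C2).

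The only point of mild care is the bookkeeping of the cancellation in the first step and the verification that the two north/south pole points really are on $\Gamma$ and do satisfy $\Bb\cdot\mathbf T=0$ there; both follow from the fact that at $x_3=\pm 1$ the surface tangent plane degenerates and $\Ab$ points horizontally, so the identity from the $\mathbf c_1$-chart extends to the full curve by continuity. Everything else is elementary trigonometric simplification.
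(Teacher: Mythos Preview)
Your derivation of the formula is correct and follows essentially the same route as the paper: the paper computes $\mathbf A\cdot\mathbf T$ first and then multiplies by $\tau$, while you work directly with $\Bb=-\tau\mathbf A$; the trigonometric cancellation is identical.

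There is, however, a conceptual slip in your verification of (C2). You identify the points where $\Bb\cdot\mathbf T=0$ as the ``tangency points,'' but $\Bb\cdot\mathbf T=0$ means that $\Bb$ is \emph{orthogonal} to $\mathbf T$, i.e.\ orthogonal to $\Gamma$ inside the tangent plane of $\partial\Omega$, not tangent to $\Gamma$. The condition ``$\Bb$ tangent to $\Gamma$'' in Assumption~(C2) means $\Bb\parallel\mathbf T$, which for unit vectors reads $|\Bb\cdot\mathbf T|=1$. Your own formula gives
\[
|\Bb\cdot\mathbf T|=\frac{\tau(1-x_3^2)}{\sqrt{1+\tau^2(1-x_3^2)^2}}<1\qquad\text{for all }x\in\Gamma,
\]
so the set in (C2) is in fact \emph{empty}, hence trivially finite. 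Your conclusion that (C2) holds is therefore correct, but the set $\{(0,0,\pm1)\}$ you exhibit is the wrong one (those are the points where $\Bb$ is orthogonal to $\Gamma$, not tangent). The paper's own proof records the orthogonality set as well and phrases the final sentence somewhat loosely, so you are in good company; just correct the labeling.
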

\begin{proof}
It is straightforward to compute
\begin{equation}\label{eq:A.N}
|\mathbf A(x)\cdot\mathbf T(x)| = \frac {1}{\tau} (|\cos(\tau x_3) T_1+\sin(\tau x_3)T_2|)= \frac{1-x_3^2}{\sqrt{1+\tau^2(1-x_3^2)^2}}\,,
\end{equation}
which holds for all $-1\leq x_3 <  1$ and $x=\mathbf c(x_3)$.  Similarly, we can compute $|\mathbf A(x)\cdot\mathbf T(x)|$ for all $x=\mathbf c_2(x_3)\in\Gamma$,  and get that \eqref{eq:A.N} holds globally on $\Gamma$,   since $\Gamma$ is a regular curve. Finally, $\Bb(x)$ is orthogonal to $\mathbf T(x)$ if and only if $x_3^2=1$, thereby  (C2) holds. 
\end{proof}

 Our next task is to show that our magnetic field satisfies the condition (C1) in Assumption~\ref{ass:C1}.
 \begin{proposition}\label{prop:C1}
Let  $\Bb$ be the magnetic field introduced in \eqref{eq:curl-N-tau}. For all $x\in \Gamma$, we have 
\begin{equation} \label{eq:aa5*} 
 \kappa_{n,{\mathbf B}} (x) = \sqrt{1+\tau^2(1-x_3(s)^2)^2}\,.
\end{equation}
 In particular, $\Bb$   satisfies the condition $\rm(C1)$.
\end{proposition}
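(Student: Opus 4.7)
The plan is to use the defining formula $\kappa_{n,\Bb}(x)=|d^T(\Bb\cdot\Nb)(x)|$ and exploit that $\Bb\cdot\Nb$ vanishes identically on $\Gamma$, which reduces the tangential gradient to a single directional derivative along $\Vb$. Concretely, $\Bb\cdot\Nb$ extends to the smooth function
\[
f(x):=x_1\cos(\tau x_3)+x_2\sin(\tau x_3)\quad(x\in\R^3),
\]
using that $\Nb(x)=-x$ on $\partial\Omega$ and $\Bb=-\tau\Ab$. At $\gamma(s)\in\Gamma$ the pair $(\Vb(\gamma(s)),\Tb(\gamma(s)))$ is an orthonormal basis of $T_{\gamma(s)}\partial\Omega$, so
\[
|d^T f(\gamma(s))|^2=(\Vb\cdot\nabla f)^2+(\Tb\cdot\nabla f)^2.
\]
Since $f\equiv 0$ along $\Gamma$ and $\Tb$ is tangent to $\Gamma$, the tangential piece $\Tb\cdot\nabla f$ vanishes, and it suffices to compute $\Vb\cdot\nabla f$.

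A direct computation gives
\[
\nabla f=\bigl(\cos(\tau x_3),\,\sin(\tau x_3),\,\tau(x_2\cos(\tau x_3)-x_1\sin(\tau x_3))\bigr).
\]
On the branch $\mathbf c_1$, substituting $x_1=\sqrt{1-x_3^2}\sin(\tau x_3)$ and $x_2=-\sqrt{1-x_3^2}\cos(\tau x_3)$ collapses the third component to $-\tau\sqrt{1-x_3^2}$. Plugging in the explicit formula for $\Vb(\gamma(s))$ derived in Subsection~3.3 and using $\cos^2+\sin^2=1$, all cross terms cancel, and the surviving pieces combine, via $m(x_3)=\sqrt{(1-x_3^2)/(1+\tau^2(1-x_3^2)^2)}$, into
\[
\Vb(\gamma(s))\cdot\nabla f(\gamma(s))=-m(x_3)\,\frac{1+\tau^2(1-x_3^2)^2}{\sqrt{1-x_3^2}}=-\sqrt{1+\tau^2(1-x_3(s)^2)^2},
\]
which is exactly \eqref{eq:aa5*} on $\mathbf c_1((-1,1))$.

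The identical computation on $\mathbf c_2$ gives the same magnitude by symmetry, and the formula extends continuously to the poles $(0,0,\pm 1)$ since $\Gamma$ is $C^\infty$ by Proposition~\ref{prop:Gam-rc}. The right-hand side of \eqref{eq:aa5*} is bounded below by $1$, hence never vanishes, which establishes condition (C1). The main bookkeeping obstacle is verifying the cancellation of cross terms in $\Vb\cdot\nabla f$; this cancellation is forced by the explicit shape of $\Vb$ together with $\cos^2+\sin^2=1$, after which the factor $m(x_3)$ neatly absorbs the two surviving contributions into the claimed square root.
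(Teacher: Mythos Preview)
Your proof is correct and follows essentially the same idea as the paper's: both show that the tangential differential of $\Bb\cdot\Nb$ reduces to a single directional derivative along $\Vb$ (since $\Bb\cdot\Nb$ vanishes along $\Gamma$) and then compute that derivative explicitly on the branch $\mathbf c_1$.

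The execution differs slightly. The paper sets up the adapted coordinates $(r,s,t)$ explicitly via the great circle parametrization $x(r,s,t)=-(\cos r+t)\Nb(\gamma(s))-\sin r\,\Vb(\gamma(s))$, observes that $\partial_r x|_{r=t=0}=-\Vb$, and then computes $\partial_r(\Ab\cdot\Nb)$ by the product rule, splitting into the two contributions $(\partial_r\Ab)\cdot\Nb$ and $\Ab\cdot(\partial_r\Nb)$ separately. You instead extend $\Bb\cdot\Nb$ to the ambient function $f(x)=x_1\cos(\tau x_3)+x_2\sin(\tau x_3)$ on $\R^3$, compute its full gradient once, and dot with $\Vb$. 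Your route is a bit more economical since it avoids introducing the coordinate $r$ and the product-rule split, while the paper's construction has the side benefit of exhibiting the adapted coordinates concretely for the ball (which are used elsewhere in Section~\ref{sec:helix}). Both approaches lead to the same algebraic cancellation via $\cos^2+\sin^2=1$ and the same absorption of the two surviving terms by the factor $m(x_3)$.
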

\begin{proof}
 By Proposition~\ref{prop:Gam-rc}, $\Gamma$ is  a regular curve. So all we need to verify that $\Bb$ satisfies $\rm(C1)$, is to  derive \eqref{eq:aa5*} and observe that it yields $\kappa_{n,{\mathbf B}} (x) \not=0$ everywhere on $\Gamma$.

 Consider $x=\mathbf c_1(x_3)$ with $x_3=x_3(s)$, i.e. $x=\gamma(s)$. At the point $\gamma(s)$, the geodesic $\Lambda_{\gamma(s)}$  normal to the curve  $\Gamma$ is  the  great circle  (of center $0$ and radius $1$) in the $(\mathbf V(\gamma(s)),\mathbf N(\gamma(s))$ plane. A point $P=P(r,s)$ on  $\Lambda_{\gamma(s)}$ can be described by  the corresponding vector $\mathbf p(r,s)=\overrightarrow{OP}$ as follows
\[ \mathbf p(r,s) =-\cos r\, \mathbf N(\gamma(s))-\sin r\, \mathbf V(\gamma(s))\,, \]
where $r$ is the  angle between $\mathbf p$ and $-\mathbf N$; hence $r$ is an arc-length length parameter of $\Lambda_{\gamma(s)}$, and  for $r=0$, $p(r,s)=\gamma(s)$.
\begin{figure}[t]
\begin{center}
\includegraphics[scale=3]{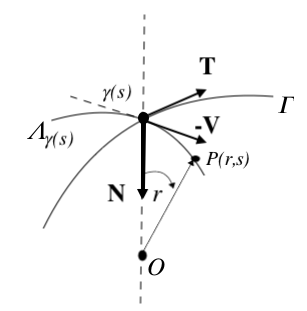}
\end{center}
\caption{The curve $\Gamma$ and the geodesic $\Lambda_{\gamma(s)}$ passing through $\gamma(s)$.}\label{fig1} 
\end{figure}
Now, we can introduce the coordinates $(r,s,t)$ in a neighborhood of $\gamma(s_0)$ as follows (see Fig.~\ref{fig1})
\begin{equation}\label{eq:x(r,s,t)-ball}
x(r,s,t)= -(\cos r+t)\mathbf N(\gamma(s))-\sin r\mathbf V(\gamma(s))\,.
\end{equation}
 For $x=\gamma(s)$, we would like to compute
$\kappa_{n,\Bb}(x)=|d^T(\Bb\cdot\Nb )|$.  We will show that $\kappa_{n,\Bb}(x)= \partial_r(\Bb\cdot\Nb)|_{r=t=0}$ and end up with the computation of 
$ \big|\partial_r(\Bb\cdot\Nb)|_{r=t=0}\,.$

 Notice that, by \eqref{eq:N,T-c}, we have 
\begin{equation*}
\begin{array}{ll}x_3(r,s,t)& = -(\cos r+t)\mathbf N_3(\gamma(s))-\sin r\mathbf V_3(\gamma(s))\\ 
&=  (\cos r+t) x_3(s) -\sin r \, m(x_3(s)) \tau (1-x_3(s)^2) \,,
\end{array}
\end{equation*}
and  we observe that by \eqref{eq:x(r,s,t)-ball},
\begin{equation} \label{eq:aa1}  \frac{\partial x}{\partial r}\,\big|_{r=t=0}= -V(\gamma(s))\,.
\end{equation}
In particular we have
\[
\frac{\partial x_3}{\partial r}\,\big|_{r=t=0}  =- \tau(1-x_3^2(s)) m(x_3(s))\,.
\]
Now, using \eqref{eq:dx/ds} and \eqref{eq:N,T-c},  we get from \eqref{eq:N-tau}  that 
\begin{equation}\label{eq:aa2}
\frac{\partial\Ab}{\partial r}\cdot\mathbf N\,\big|_{r=t=0}= - 
\frac{\tau(1-x_3^2)^2}{\sqrt{1+\tau^2(1-x_3^2)^2}} \,. 
\end{equation}
Moreover, by \eqref{eq:aa1}  we have
\[ \frac{\partial}{\partial r}\mathbf N(x(r,s,t))\,\big|_{r=t=0}=\mathbf V(\gamma(s))\]
and
\begin{equation}\label{eq:aa3} \begin{aligned}\mathbf A\cdot \frac{\partial}{\partial r}\mathbf N(x(r,s,t))\,\big|_{r=t=0}&=\frac1\tau \cos(\tau x_3(s))V_1+\frac1\tau\sin(\tau x_3(s))V_2\\
&= - \frac{1}{\tau\sqrt{1+\tau^2(1-x_3^2)^2}}\,.\end{aligned}
\end{equation}
Summing up, we deduce from \eqref{eq:aa2} and \eqref{eq:aa3} that
\begin{equation}\label{eq:dr-A.N} 
\big|\partial_r ( \mathbf A\cdot\mathbf N)\,|_{r=t=0}\big|=
\frac{1}{\tau} \sqrt{1+\tau^2(1-x_3^2)^2}\,.
\end{equation}
We also observe that  $\partial_s  ( \mathbf A\cdot \mathbf N)\,|_{r=t=0}=0$ and we get 
\begin{equation} \label{eq:aa4}  
| d^T (\mathbf A\cdot \mathbf N) | (\gamma(s)) = \frac{1}{\tau} \sqrt{1+\tau^2(1-x_3^2)^2} 
\,,
\end{equation}
on each branch (including the end points).  Inserting this into \eqref{eq:d(BN)}, we  get the identity in \eqref{eq:aa5*}.
\end{proof}

We return to the function in \eqref{eq:tilde-gam}  and can give its expression in coordinates. We  deduce from \eqref{eq:A.N} and  \eqref{eq:aa5*}:
\[\widetilde\gamma_{0,\Bb}(x)=2^{-2/3} \widehat{\nu}_0 \delta_0^{1/3} \left(1+ \tau^2(1-x_3^2)^2\right)^{1/3}\left(1-(1-\delta_0)\frac{\tau(1-x_3^2)}{\sqrt{1+\tau^2(1-x_3^2)^2}}\right)^{1/3} \]
for all $x= (\pm \sqrt{1-x_3^2}\sin(\tau x_3),\mp\sqrt{1-x_3^2}\cos(\tau x_3),x_3)$ with $-1\leq x_3\leq 1$.

 Consequently, we can compute the quantity appearing in the two terms asymptotics by computing $\inf_{x\in\Gamma}\widetilde\gamma_{0,\Bb}(x)$ and determining where the infimum is attained.
 
\begin{proposition}\label{prop:min-example}
Let 
\[ \tau_0= \frac1{\sqrt{2}}\left(\frac{1}{\sqrt{\delta_0+\delta_0(1-\delta_0)}}-1\right)^{1/2}\,. \]
The following holds:
\begin{enumerate}
\item If $0<\tau\leq \tau_0$, then 
\[\inf_{x\in\Gamma}\widetilde\gamma_{0,\Bb}(x)=2^{-2/3} \widehat{\nu}_0 \delta_0^{1/3}(1+\tau^2)^{1/3}\Big(1-(1-\delta_0)\frac{\tau^{1/3}}{(1+\tau^2)^{1/6}} \Big)=\widetilde\gamma_{0,\Bb}(0,\pm1,0)\,.\]
\item If $\tau>\tau_0$, then
\[\inf_{x\in\Gamma}\widetilde\gamma_{0,\Bb}(x)=2^{-2/3} \widehat{\nu}_0 \delta_0^{1/3}(1+\tau_0^2)^{1/3}\Big(1-(1-\delta_0)\frac{\tau_0^{1/3}}{(1+\tau_0^2)^{1/6}} \Big)\,,\]
and the minimum is attained on the points
\[\Big(\,\pm\sqrt{\frac{\tau_0}{\tau}}\sin\tau\sqrt{1-\frac{\tau_0}{\tau}}, \mp\sqrt{\frac{\tau_0}{\tau}}\cos\tau\sqrt{1-\frac{\tau_0}{\tau}}, \sqrt{1-\frac{\tau_0}{\tau}}\,\Big)\]
and
\[\Big(\,\pm\sqrt{\frac{\tau_0}{\tau}}\sin\tau\sqrt{1-\frac{\tau_0}{\tau}}, \pm\sqrt{\frac{\tau_0}{\tau}}\cos\tau\sqrt{1-\frac{\tau_0}{\tau}}, -\sqrt{1-\frac{\tau_0}{\tau}}\,\Big)\,.\]
\end{enumerate}
\end{proposition}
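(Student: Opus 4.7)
The plan is to exploit the fact that the expression for $\widetilde\gamma_{0,\Bb}(x)$ derived just before the proposition depends only on $x_3^2$ through the combination $v := \tau(1-x_3^2) \in [0,\tau]$. I introduce the one-variable function
\[
h(v) := (1+v^2)^{1/3}\Bigl(1-(1-\delta_0)\tfrac{v}{\sqrt{1+v^2}}\Bigr)^{1/3},
\]
so that $\widetilde\gamma_{0,\Bb}(x) = 2^{-2/3}\widehat\nu_0 \delta_0^{1/3} h(v)$, and the problem reduces to minimizing $h$ on the closed interval $[0,\tau]$. The correspondence $v\leftrightarrow x$ is easy to track: each value $v\in(0,\tau)$ is attained at $x_3 = \pm\sqrt{1-v/\tau}$, and for each such $x_3$ the two branches $\mathbf c_1,\mathbf c_2$ yield two distinct points of $\Gamma$, so each interior $v$ corresponds to exactly four points.

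Next I differentiate $\log h$ and, using $\frac{d}{dv}\frac{v}{\sqrt{1+v^2}}=(1+v^2)^{-3/2}$, I reduce the critical point equation $h'(v)=0$ to
\[
2v\sqrt{1+v^2} \;=\; (1-\delta_0)(1+2v^2).
\]
Squaring and setting $w:=1+2v^2$ converts this to $w^2-1=(1-\delta_0)^2 w^2$, i.e.\ $w^2 = [\delta_0(2-\delta_0)]^{-1}$, so the unique positive solution is $v=\tau_0$, matching the constant defined in the statement.

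I then check monotonicity: the function $v\mapsto 2v\sqrt{1+v^2} - (1-\delta_0)(1+2v^2)$ is strictly increasing for $v\geq 0$ (both terms have manifestly signed derivative), so $h'<0$ on $(0,\tau_0)$ and $h'>0$ on $(\tau_0,\infty)$. Consequently $h$ strictly decreases on $[0,\tau_0]$ and strictly increases on $[\tau_0,\infty)$, with a unique global minimum at $v=\tau_0$.

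The conclusion splits into the two cases. If $\tau\leq\tau_0$, the interval $[0,\tau]$ lies in the decreasing regime, so the minimum of $h$ is at the right endpoint $v=\tau$, i.e.\ at $x_3=0$, which from $\mathbf c_1(0)$ and $\mathbf c_2(0)$ gives the two equatorial points $(0,\pm1,0)$. If $\tau>\tau_0$, the minimum is at $v=\tau_0$, i.e.\ at $1-x_3^2=\tau_0/\tau$, so $x_3=\pm\sqrt{1-\tau_0/\tau}$; evaluating $\mathbf c_1$ and $\mathbf c_2$ at these two values of $x_3$ produces the four points listed in part (2). The only real obstacle is the bookkeeping: one must carefully check that the derived critical equation, after squaring, does not introduce a spurious root (which it does not, since the right-hand side $(1-\delta_0)(1+2v^2)$ is positive, matching the sign of the left-hand side on $v>0$), and that the points coming from $x_3$ and $-x_3$ via the two branches $\mathbf c_1,\mathbf c_2$ are genuinely distinct and collectively exhaust the minimizer set.
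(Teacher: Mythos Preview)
Your approach is essentially identical to the paper's: both introduce $v=\tau(1-x_3^2)$, reduce to minimizing a single-variable function on $[0,\tau]$ (your $h$ is the cube root of the paper's $f(v)=1+v^2-\mu_0 v\sqrt{1+v^2}$ with $\mu_0=1-\delta_0$), derive the same critical equation, and split into the two cases according to whether $\tau\lessgtr\tau_0$.

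One small point: your parenthetical justification for the strict monotonicity of $g(v):=2v\sqrt{1+v^2}-(1-\delta_0)(1+2v^2)$ is not right as stated, since the two summands have derivatives of \emph{opposite} sign for $v>0$. The conclusion is still correct: $g'(v)=2\dfrac{1+2v^2}{\sqrt{1+v^2}}-4(1-\delta_0)v>0$ follows from $(1+2v^2)^2>4v^2(1+v^2)$ (equivalently $1>0$) together with $1-\delta_0<1$. With that fix the argument is complete.
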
 
\begin{remark}\label{rem:tunneling}
 In  the case where  $\Omega=B(0,1)$ is the unit ball and  the magnetic field is constant, $\Bb=(0,0,1)$, we have  $\Gamma=\{x_1^2+x_2^2=1,~x_3=0\}$  and $\widetilde\gamma_{0,\Bb}(x)$ is constant on $\Gamma$.    Proposition~\ref{prop:min-example} shows a quite different phenomenon when only the  intensity  of $\Bb$ is constant, $|\Bb|=1$. In fact, $\widetilde\gamma_{0,\Bb}(x)$ is no more constant along $\Gamma$ and may have two symmetric minimum points, $(0,\pm1,0)$, which is  the signature of an interesting double well tunnel effect {\rm \cite{HeSj1}} related to  the  magnetic geometry  of the problem.
\end{remark}
\begin{proof}[Proof of Proposition~\ref{prop:min-example}]
Let us introduce $v=\tau(1-x_3^2)\in[0,\tau]$ and $\mu_0=1-\delta_0\in(0,1)$. Then
\[\widetilde\gamma_{0,\Bb}(x)=2^{-2/3} \widehat{\nu}_0 \delta_0^{1/3}\big(f(v)\big)^{1/3}\]
where $$ f(v)=1+v^2-\mu_0 v \sqrt{1+v^2}\,.$$
We have to  minimize $f(v)$ on $[0,\tau]$. Notice that
\[f'(v)=2v-\mu_0\frac{1+2v^2}{\sqrt{1+v^2}}\,,\]
and the equation $ f'(v)=0$ has a  unique positive solution, which is the solution of
\[v^4+v^2=\frac{\mu_0^2}{4(1-\mu_0^2)}\,. \]
This solution is given by
$$ 
\tau_0 = \frac{1}{\sqrt{2}} \frac{\mu_0} { \sqrt{ 1 + \sqrt{1-\mu_0^2} } \sqrt{1-\mu_0^2} }
$$
and observe that $f'(v)<0$ for $0<v<\tau_0$ and $f'(v)> 0$ for  $v>\tau_0$.  Then, for $\tau\leq\tau_0\,$, 
\[\min_{v\in [0,\tau]}f(v)=f(\tau)\,,  \]
while for $\tau>\tau_0\,$,
\[\min_{v\in[0,\tau]}f(v)=f(\tau_0)\,.\]
\end{proof}

\section{1D Models}\label{sec:1Dmod}
The aim of this section is to recall the now  standard properties of two important models.
\subsection{The de\,Gennes model}
We refer to \cite{DaHe,HelMo2} for the proof of these now standard properties which are presented below. 
For $\xi\in\R$, we consider the  harmonic oscillator  on $\R_+$:
\begin{equation}\label{eq:Harm-osc}
H(\xi):=D_t^2+(t-\xi)^2\,,
\end{equation}
with Neumann boundary condition at $0$. We denote by $\mu(\xi)$ its lowest eigenvalue.  $\xi \mapsto \mu(\xi)$ admits a unique minimum at a point $\xi_0$ which in addition is non-degenerate. This leads to introduce the spectral constants, $\Theta_0$  and $\delta_0$:
\begin{equation}\label{eq:Th0}
\Theta_0=\inf_{\xi\in\R}\mu(\xi)=\mu(\xi_0),\quad \delta_0=\mu''(\xi_0)\,,
\end{equation}
where $\xi_0=\sqrt{\Theta_0}$.\\
 Moreover  $\frac12<\Theta_0<1$ and that  $0< \delta_0< 1$.
 $\Theta_0$  is called the de\,Gennes constant.\\
If $\varphi_0\in L^2(\R_+)$ denotes  the positive and normalized ground state of $H(\xi_0)$, 
\begin{equation}\label{eq:mu'=0}
\int_{\R_+}(t-\xi_0)|\varphi_0(t)|^2dt=0\,, 
\end{equation}
which  amounts to  saying, via the Feynman-Hellmann formula,  that $\mu'(\xi_0)=0$. We also introduce the regularized resolvent $\mathcal R_0\in\mathcal L(L^2(\R_+))$ as follows
\begin{equation}\label{eq:Reg-res}
\mathcal R_0u=\begin{cases}
\big(H(\xi_0)-\Theta_0\big)^{-1}u&{\rm if}~u\perp \varphi_0\\
0&{\rm if}~u\parallel \varphi_0
\end{cases}\,.
\end{equation}

\subsection{The Montgomery model} 
Here we refer to \cite{HelMo1} and \cite{KwPa}.
In Theorem~\ref{thabove}, the constant $\widehat\nu_0>0$ is related to the Montgomery model \cite{Mon}  whose spectral analysis has a long story including recently (see \cite{HeLe} and references therein). For $\rho\in\R$, we introduce, in $L^2(\R)$,  the operator 
\[S(\rho)=D_r^2+(r^2-\rho)^2\,, \]
and denote its lowest eigenvalue by $\mu^{\rm Mon}(\rho)$. Then
\begin{equation}\label{eq:Mont}
\widehat\nu_0:=\inf_{\rho\in\R}\mu^{\rm Mon}(\rho)=\mu^{\rm Mon}(\rho_0)\,,
\end{equation}
where $\rho_0\in\R$ is the  unique minimum of $\mu^{\rm Mon}$,  which has been later shown to be non degenerate \cite{Qmath10}. 
Finally,  the normalized positive  ground state $\psi_0\in L^2(\R)$ of $S(\rho_0)$  belongs to  the Schwartz space $S(\mathbb R)$ and is an even function. 
\section{Model operator for non-uniform magnetic fields}\label{sec:Nmod}

Given real parameters $\eta,\zeta,\gamma$ and $\theta$, we  consider the operator 
\begin{equation}\label{n1a}
\begin{array}{ll}
P^{h,\eta,\zeta}_{0;\gamma,\theta} &:= (h D_r   - \sin \theta \,t -  \cos \theta \, (\eta s +
 \zeta r) t  )^2\\&\quad
 + (h D_s + \cos \theta  \,t  -  \sin \theta \,  (\eta s +\zeta r)\, t
 +  \gamma \frac{r^2}{2})^2 \\&\quad  + h^2 D_t^2\;,
\end{array}
\end{equation}
on $\R^2 \times \R^+$ (actually in a neighborhood of $(0,0,0)$).
 Let  us fix a positive constant $M$.  We assume  that 
\begin{equation}\label{eq:eta..<M}
\eta,\zeta,\gamma\in[-M,M]\,.
\end{equation}

We note, when $\eta=\zeta=0$, we recover the model studied in \cite[Sec.~11]{HelMo4}. 
Our aim is to compare this situation with  that  when $\eta=\zeta=0$. 
Our main result on this model is Proposition~\ref{lemestmodzone2bis} below, which is useful in our derivation of the lower bound matching with the asymptotics in Theorem~\ref{thm:main}. The lower bound in this proposition  is uniform with  respect to the various parameters  appearing in \eqref{n1a} provided \eqref{eq:eta..<M} holds and   $h$ is sufficiently small.

Let us look at this model more carefully.
We proceed essentially like in the case $\eta=\zeta=0$.
We do the following scaling 
\begin{equation}\label{newscaling}
r=h^\frac 13 \hat r\;,\; s=h^\frac 13 \hat s\;,\; t=h^\frac 12 \hat
t\;.
\end{equation}
After division by $h$, this leads to  (forgetting the hats)
\begin{equation}\label{n2a}
\begin{array}{l}
P_{1;\gamma,\theta}^{h,\eta,\zeta}:= \left(h^\frac 16  D_r   - \sin \theta \, t - h^{\frac13}  \cos \theta\, t   (\eta s + \zeta r) \right)^2\\
\quad \quad 
 + \left(h^\frac 16 D_s  + \cos \theta \, t + 
 h^\frac 16 \gamma \frac{r^2}{2} - h^\frac 13    \sin \theta\,t (\eta
 s + \zeta  r)\right)^2 +
 D_{ t}  ^2
\end{array}
\end{equation}
on $\R \times \R \times \R^+$. \\ 
 Hence we have 
\[\sigma(P_{0;\gamma,\theta}^{h,\eta,\zeta})=h\,\sigma(P_{1;\gamma,\theta}^{h,\eta,\zeta}).\]
Unlike the case where $\eta=\zeta=0$, we can no more perform a partial Fourier transform in the $s$-variable.
But we  can  
rewrite  this operator as in the following lemma.
\begin{lemma}\label{lem:n4a}
It holds,
\begin{equation}\label{n4a}
P_{1;\gamma,\theta}^{h,\eta,\zeta} = D_t^2 + (t -  h^\frac 16    L_{1,\gamma,\theta}) ^2 +
h^\frac 13  ( L_{2,\gamma,\theta} ^{h,\eta,\zeta})^2\;,
\end{equation}
where
\begin{equation} \label{eq:L1,L2}  
\begin{array}{ll}
  L_{1;\gamma,\theta}   & =  \sin \theta D_r - \cos \theta \left(\frac \gamma
2 r^2+ D_s \right) \;, \\
  L^{h,\eta,\zeta}_{2;\gamma,\theta}   & :=  
\cos \theta D_r + \sin \theta \left(\frac \gamma 2 r^2 + D_s\right)
 -   h^\frac 16  ( \zeta r + \eta s)  t \,.
\end{array}
\end{equation}
\end{lemma}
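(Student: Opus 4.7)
The identity is essentially a rotation trick combined with a direct computation. The plan is as follows.

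Denote the two ``magnetic'' first-order operators in the definition of $P_{1;\gamma,\theta}^{h,\eta,\zeta}$ by
\begin{align*}
A &:= h^{1/6}D_r - \sin\theta\, t - h^{1/3}\cos\theta\, t\,(\eta s+\zeta r),\\
B &:= h^{1/6}D_s + \cos\theta\, t + h^{1/6}\gamma\tfrac{r^2}{2} - h^{1/3}\sin\theta\, t\,(\eta s+\zeta r),
\end{align*}
so that $P_{1;\gamma,\theta}^{h,\eta,\zeta}=A^2+B^2+D_t^2$. The key algebraic observation is the purely formal identity
\[
A^2+B^2 = \bigl(\cos\theta\,A+\sin\theta\,B\bigr)^2+\bigl(-\sin\theta\,A+\cos\theta\,B\bigr)^2,
\]
which is valid for any two (not necessarily self-adjoint) operators $A,B$, since the cross terms $\pm\sin\theta\cos\theta\,(AB+BA)$ generated by the two squares on the right-hand side cancel. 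In other words, the rotation by angle $-\theta$ in the $(A,B)$ plane preserves the sum of squares.

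The remaining task is to compute the two rotated combinations explicitly. First I would collect the coefficients in $-\sin\theta\,A+\cos\theta\,B$: the $t$-terms give $\sin^2\theta+\cos^2\theta=1$, the $h^{1/3} t(\eta s+\zeta r)$-terms cancel since their coefficient is $\sin\theta\cos\theta-\sin\theta\cos\theta=0$, and the $h^{1/6}$-terms yield $-\sin\theta D_r+\cos\theta(D_s+\gamma r^2/2)$. Recognising this as $-L_{1;\gamma,\theta}$, one obtains
\[
-\sin\theta\,A+\cos\theta\,B = t-h^{1/6}L_{1;\gamma,\theta}.
\]
Next, I would do the analogous bookkeeping for $\cos\theta\,A+\sin\theta\,B$: the $t$-terms now cancel, the $h^{1/3} t(\eta s+\zeta r)$-terms combine to $-(\sin^2\theta+\cos^2\theta)\,t(\eta s+\zeta r)=-t(\eta s+\zeta r)$, and the $h^{1/6}$-terms give $\cos\theta D_r+\sin\theta(\gamma r^2/2+D_s)$. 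Factoring out $h^{1/6}$ then produces exactly $h^{1/6}L^{h,\eta,\zeta}_{2;\gamma,\theta}$, so that
\[
\cos\theta\,A+\sin\theta\,B = h^{1/6}L^{h,\eta,\zeta}_{2;\gamma,\theta}.
\]

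Squaring the two identities and summing, the rotation identity gives $A^2+B^2=(t-h^{1/6}L_{1;\gamma,\theta})^2+h^{1/3}(L^{h,\eta,\zeta}_{2;\gamma,\theta})^2$, and adding $D_t^2$ yields \eqref{n4a}. There is no genuine obstacle here; the only care needed is with signs and with verifying that the $h^{1/3}$-terms in $A$ and $B$ were chosen precisely so that they disappear in the ``normal'' component $-\sin\theta A+\cos\theta B$ and produce the single correction $-h^{1/6}(\zeta r+\eta s)t$ inside $L_{2}$ in the ``tangential'' component. This is the structural reason the authors made exactly these choices in \eqref{n1a}.
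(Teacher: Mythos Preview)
Your proof is correct and in fact more self-contained than the paper's. The paper does not use the rotation identity directly; instead it invokes the already-known decomposition for the case $\eta=\zeta=0$ (from \cite[Eq.~(11.4)]{HelMo4}), namely
\[
P_{1;\gamma,\theta}^{h,0,0}=D_t^2+(t-h^{1/6}L_{1;\gamma,\theta})^2+h^{1/3}(L_{2;\gamma,\theta})^2,
\]
and then computes the difference $P_{1;\gamma,\theta}^{h,\eta,\zeta}-P_{1;\gamma,\theta}^{h,0,0}$, checking that the extra terms are exactly those obtained by replacing $L_{2;\gamma,\theta}$ with $L_{2;\gamma,\theta}^{h,\eta,\zeta}=L_{2;\gamma,\theta}-h^{1/6}(\zeta r+\eta s)t$ in the last square. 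Your approach buys independence from \cite{HelMo4} and makes transparent \emph{why} the perturbation lands only in $L_2$ (the $h^{1/3}$-terms in $A,B$ are orthogonal to the $(-\sin\theta,\cos\theta)$ direction). The paper's approach, on the other hand, emphasizes the perturbative viewpoint that drives the rest of Section~\ref{sec:Nmod}, where everything is compared to the $\eta=\zeta=0$ model.
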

Note that to compare with the case considered in \cite{HelMo4} ($\eta=\zeta=0$) we can write
\begin{equation}\label{eq.L2=L20}
 L_{2;\gamma,\theta}^{h,\eta,\zeta}  = L_{2;\gamma,\theta} -   h^\frac 16  ( \zeta r + \eta s)  t\,,
\end{equation}
where  $L_{2;\gamma,\theta}:=L_{2;\gamma,\theta}^{0,0,0}$.
\begin{proof}[Proof of Lemma~\ref{lem:n4a}]
 Let  $P_{1;\gamma,\theta}^h :=P_{1;\gamma,\theta}^{h,0,0}$. Then
 (see \cite[Eq.~(11.4)]{HelMo4})
 \[ P_{1;\gamma,\theta}^h= D_t^2 + (t -  h^\frac 16    L_{1,\gamma,\theta}) ^2 +
h^\frac 13  ( L_{2,\gamma,\theta} )^2\,.\]
With  $p=(\eta s + \zeta r) t$, we have 
\[
 P_{1;\gamma,\theta}^{h,\eta,\zeta}=  P_{1;\gamma,\theta}^h
+h^{\frac13}\big[ -2(h^{\frac16} p) L_{2;\gamma,\theta}   - h^{\frac16}\big(\cos\theta (D_r p) +\sin\theta (D_sp)\big)+(h^{\frac16}p)^2  \big]\,. \]
Finally, we observe by \eqref{eq.L2=L20},
\[(L_{2;\gamma,\theta}^{h,\eta,\zeta} )^2=( L_{2,\gamma,\theta} )^2-2(h^{\frac16} p) L_{2;\gamma,\theta}   - h^{\frac16}\big(\cos\theta (D_r p) +\sin\theta (D_sp)\big)+(h^{\frac16}p)^2\,. \]
\end{proof}
When $\eta=\zeta =0$, this is the operator studied in \cite{HelMo4}, modulo a  Fourier transformation  with respect to the $s$ variable. Let us recall the following important result \cite[Lem.~13.4]{HelMo4} corresponding to the case $(\eta,\zeta)=(0,0)$.
\begin{proposition}[Helffer-Morame]\label{lemestmodzone2}\
For any $C_0>0$, $\delta  \in ]0, \frac{1}{3}[$ and $M>0$, there exist positive constants $C$ and $h_0$ such
that, for all $\theta\in\R$, $|\gamma|\leq M$, and $h\in ]0,h_0]$,  we have,  for any $u\in C_0^\infty (]-C_0h^{\delta}, C_0h^{\delta }[\times \R\times 
\overline{\R_+})$
\begin{equation}\label{estglob2} 
\langle P_{0;\gamma,\theta}^{h,0,0} u\, ,\,  u \rangle 
\geq \big(h\Theta_0+h^{\frac 43}c^{\rm conj}(\gamma, \theta) -
 C (h^{\frac{11}{8}} + h^{\delta + \frac{13}{12}})\big)\; \| u\| ^2 \; ,
\end{equation} 
where
\[c^{\rm conj}(\gamma, \theta):= \Big(\frac 12\Big)^\frac 23 \delta_0^\frac 13 
 |\gamma|^\frac 2 3 ( \delta_0 \sin^2 \theta + \cos^2 \theta)^\frac 13 \, 
{\hat \nu_0}, \]
and $P^{h,0,0}_{0;\gamma,\theta}$ is the operator introduced in (\ref{n1a}). 
\end{proposition}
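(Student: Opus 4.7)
The approach is a Born--Oppenheimer (adiabatic) reduction in the $t$-variable: integrate out the transverse harmonic-oscillator direction via the de~Gennes model, and identify the resulting effective operator on $(r,s)$ as a Montgomery-type model whose lowest eigenvalue furnishes the constant $c^{\mathrm{conj}}(\gamma,\theta)$. Using Lemma~\ref{lem:n4a} with $\eta=\zeta=0$, the inequality to prove is equivalent to
\[
\langle P_{1;\gamma,\theta}^{h} u, u\rangle \ge \bigl(\Theta_0 + h^{1/3}c^{\mathrm{conj}}(\gamma,\theta) - \text{errors}\bigr)\|u\|^2
\]
on the rescaled cutoff region $|\hat r|\le C_0 h^{\delta-1/3}$. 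Since $L_{1;\gamma,\theta}$ and $L_{2;\gamma,\theta}$ both commute with $D_s$, a partial Fourier transform in the $s$-variable reduces the problem to a $\sigma$-indexed family of operators acting in $(r,t)$, with $L_j(\sigma)$ obtained by replacing $D_s$ by $\sigma$.

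For each $\sigma$, view $D_t^2 + (t - h^{1/6}L_1(\sigma))^2$ as a de~Gennes operator with operator-valued parameter $h^{1/6}L_1(\sigma)$, and apply functional calculus in the self-adjoint operator $L_1(\sigma)$: its ground-state band is $\mu(h^{1/6}L_1(\sigma))$, with a uniform spectral gap on the orthogonal complement. Combining this with the Taylor bound $\mu(\xi)\ge \Theta_0 + \tfrac{\delta_0}{2}(\xi-\xi_0)^2 - C|\xi-\xi_0|^3$ gives, on the ground-state band and up to controlled remainders,
\[
P_{1;\gamma,\theta}^{h} \ge \Theta_0 + h^{1/3}\Big[\tfrac{\delta_0}{2}\bigl(L_1(\sigma) - h^{-1/6}\xi_0\bigr)^2 + L_2(\sigma)^2\Big].
\]
The bracketed expression is a quadratic form in the operator pair $(D_r,\,\tfrac{\gamma}{2}r^2 + \sigma)$ with coefficients encoding the rotation of angle $\theta$ together with the anisotropy $\delta_0/2$; after eliminating the $h^{-1/6}\xi_0$ shift by absorbing it into the Fourier variable, diagonalizing the $2\times 2$ principal matrix (which produces the factor $(\delta_0 \sin^2\theta + \cos^2\theta)^{1/3}$), and performing the isotropic rescaling $r = |\gamma|^{-1/3}\hat r$, the expression becomes a multiple of the standard Montgomery operator of \eqref{eq:Mont}. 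Minimizing over the Fourier/Montgomery parameter yields $\widehat\nu_0$, and reassembling the constants reproduces $c^{\mathrm{conj}}(\gamma,\theta)$.

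The main obstacle is making the operator-valued Born--Oppenheimer step rigorous while controlling errors uniformly in $|\gamma|\le M$ and $\theta\in\R$. Two contributions dominate: the cubic remainder in the $\mu$-expansion, which involves $(L_1(\sigma)-h^{-1/6}\xi_0)^3$ evaluated on states supported in $|r|\le C_0 h^{\delta-1/3}$, dictates the constraint $\delta<\tfrac13$ and produces the $h^{\delta+13/12}$ term; while the Feshbach-type off-diagonal couplings between the ground-state band projector $\Pi_0$ and $\Pi_0^{\perp}$, controlled using the de~Gennes spectral gap, contribute the $h^{11/8}$ term after balancing. All constants can be chosen uniform in $(\gamma,\theta)$ because the spectral gap, the Taylor constants for $\mu$, and the Gaussian decay of $\varphi_0$ are insensitive to these parameters within the stated range.
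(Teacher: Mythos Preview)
The paper does not prove this proposition; it is quoted verbatim from \cite[Lem.~13.4]{HelMo4}, and your outline is indeed the Born--Oppenheimer/Feshbach strategy used there: Fourier in $s$, operator-valued de\,Gennes reduction in $t$, then identification of the effective $(r,\sigma)$-operator with a rotated/rescaled Montgomery model. So strategically you are on target.

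That said, two steps in your sketch are assertions rather than arguments, and they are precisely the places where the work lies. First, you write that the cubic Taylor remainder ``involves $(L_1(\sigma)-h^{-1/6}\xi_0)^3$ evaluated on states supported in $|r|\le C_0 h^{\delta-1/3}$'' and that this support condition yields the $h^{\delta+13/12}$ error. But $L_1(\sigma)=\sin\theta\,D_r-\cos\theta(\tfrac{\gamma}{2}r^2+\sigma)$ contains $D_r$, which is \emph{not} controlled by the $r$-support of $u$; the support only bounds the polynomial-in-$r$ part. To close this, one needs either a spectral localisation in $L_1$ (splitting according to $|h^{1/6}L_1-\xi_0|\lessgtr\epsilon$ and using $\mu(\xi)\ge\Theta_0+c_\epsilon$ on the far region) or an a~priori energy estimate of the type $h^{1/3}\|(L_1-h^{-1/6}\xi_0)u\|^2\lesssim \|u\|^2$ extracted from the quadratic form itself, and then a bootstrap. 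Second, ``diagonalising the $2\times2$ principal matrix'' for the pair $(L_1,L_2)$ is not linear algebra: $[L_1,L_2]=-i\gamma r\neq0$, so the reduction to the Montgomery model requires an explicit change of variables in $(r,\sigma)$ (a metaplectic/gauge transformation) together with control of the commutator remainders; the anisotropy factor $(\delta_0\sin^2\theta+\cos^2\theta)^{1/3}$ emerges only after that computation is done carefully. The precise exponents $\tfrac{11}{8}$ and $\delta+\tfrac{13}{12}$ come from balancing these localisation and commutator errors, which your sketch does not track.
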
 
\begin{remark}\label{rem:op-P2}
The underlying estimate in Proposition~\ref{lemestmodzone2} is in  fact
\[ \langle P_{1;\gamma,\theta}^{h,0,0} u\, ,\,  u \rangle 
\geq \big(\Theta_0+h^{\frac 13}c^{\rm conj}(\gamma, \theta) -
 C (h^{\frac{3}{8}} + h^{\delta + \frac{1}{12}})\big)\; \| u\| ^2.\]
 \end{remark}
 We can not directly compare $ P_{1;\gamma,\theta}^{h,\eta,\zeta} $ and $P_{1;\gamma,\theta}^{h,0,0}$ but this can be done by introducing   
  a small perturbation of  $P_{1;\gamma,\theta}^{h,0,0}$ whose spectrum is just lifted. To achieve this goal we introduce for $\tau >0$
  \[P_{1;\gamma,\theta,\tau}^{h}:=D_t^2 + (t  -  h^\frac 16    L_{1,\gamma,\theta} )^2 +
(1-h^\tau) h^\frac 13  ( L_{2,\gamma,\theta})^2,\]
where we have modified the coefficient of $( L_{2,\gamma,\theta})^2$ by $\epsilon= h^{1/3 +\tau}$. Heuristically this leads to a maximal shift of the bottom of the spectrum
by $\mathcal O (h^{1/3 +\tau})$. More precisely, we show by a slight variation of the argument in \cite[Lem.~13.3]{HelMo4}
\begin{proposition}
For all $\tau\in\,]0,1[$, for any $C_0>0$, $\delta  \in ]0, \frac{1}{3}[$ and $M>0$, there exist positive constants $C$ and $h_0$ such
that, for all $\theta\in\R$, $|\gamma|\leq M$, and $h\in ]0,h_0]$,  we have,  for any $u\in C_0^\infty (]-C_0h^{\delta}, C_0h^{\delta }[\times \R\times 
\overline{\R_+})$
\begin{equation}\label{eq:var}
 \langle  P_{1,\gamma,\theta, \tau}^{h}  u\, ,\,  u \rangle 
\geq \big(\Theta_0+h^{\frac 13}c^{\rm conj}(\gamma, \theta) -
 C (h^{\tau+\frac13}+h^{\frac{3}{8}} + h^{\delta + \frac{1}{12}})\big)\; \| u\| ^2\,.
 \end{equation}
 \end{proposition}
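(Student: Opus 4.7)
The strategy is to exploit the exact decomposition
\[
P_{1,\gamma,\theta,\tau}^{h}\;=\;(1-h^{\tau})\,P_{1,\gamma,\theta}^{h,0,0}\;+\;h^{\tau}\bigl[\,D_t^2+(t-h^{1/6}L_{1,\gamma,\theta})^2\,\bigr],
\]
which follows at once from the explicit formula of Lemma~\ref{lem:n4a}: both operators share the same transverse block $D_t^2+(t-h^{1/6}L_{1,\gamma,\theta})^2$, and they differ only by the coefficient $1$ versus $1-h^{\tau}$ in front of $h^{1/3}(L_{2,\gamma,\theta})^2$. The plan is to lower bound each summand separately and add.

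For the first summand I invoke the already-known estimate recalled in Remark~\ref{rem:op-P2}:
\[
(1-h^{\tau})\,\langle P_{1,\gamma,\theta}^{h,0,0}u,u\rangle\;\geq\;(1-h^{\tau})\bigl(\Theta_0+h^{1/3}c^{\rm conj}(\gamma,\theta)-C(h^{3/8}+h^{\delta+1/12})\bigr)\|u\|^2.
\]
For the second summand, I use the operator inequality $D_t^2+(t-h^{1/6}L_{1,\gamma,\theta})^2\geq\Theta_0$ on $L^2(\R^2\times\R_+)$ with Neumann boundary condition at $t=0$. The cleanest way to get this is to take the partial Fourier transform in the $s$-variable, which replaces $L_{1,\gamma,\theta}$ by the one-parameter family of self-adjoint first-order operators $L_{1,\gamma,\theta}(\eta)=\sin\theta\,D_r-\cos\theta(\tfrac\gamma2 r^2+\eta)$ on $L^2(\R)$; after spectrally diagonalizing each $L_{1,\gamma,\theta}(\eta)$, the inequality reduces fiber by fiber to the 1D de Gennes bound $D_t^2+(t-\xi)^2\geq\Theta_0$ (valid for every $\xi\in\R$ by the very definition of $\Theta_0$). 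Hence $h^{\tau}\langle[D_t^2+(t-h^{1/6}L_{1,\gamma,\theta})^2]u,u\rangle\geq h^{\tau}\Theta_0\|u\|^2$.

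Adding the two lower bounds, the $\Theta_0$-contributions recombine into $\Theta_0\|u\|^2$, while the factor $(1-h^{\tau})$ in front of the main correction produces a deficit of size $h^{\tau+1/3}c^{\rm conj}(\gamma,\theta)$. The explicit formula $c^{\rm conj}(\gamma,\theta)=2^{-2/3}\delta_0^{1/3}|\gamma|^{2/3}(\delta_0\sin^2\theta+\cos^2\theta)^{1/3}\widehat\nu_0$ combined with $|\gamma|\leq M$ and $\delta_0\sin^2\theta+\cos^2\theta\in[\delta_0,1]$ gives a uniform bound $|c^{\rm conj}(\gamma,\theta)|\leq C_M$, so this deficit is at most $C_M h^{\tau+1/3}$. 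The residual error $(1-h^{\tau})C(h^{3/8}+h^{\delta+1/12})$ is trivially bounded by $C(h^{3/8}+h^{\delta+1/12})$, and the three contributions together produce exactly the right-hand side of~\eqref{eq:var}. The only genuinely subtle point is the operator inequality $D_t^2+(t-h^{1/6}L_{1,\gamma,\theta})^2\geq\Theta_0$: making it rigorous requires either the spectral theorem applied to the self-adjoint operator $L_{1,\gamma,\theta}$ (essential self-adjointness on $C_0^\infty(\R^2)$ being standard for a first-order symmetric operator with a polynomial coefficient) or the Fourier/diagonalization argument sketched above.
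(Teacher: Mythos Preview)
Your proof is correct and takes a somewhat different route from the paper's. The paper does not spell out a proof but indicates that one should rerun the argument of \cite[Lem.~13.3]{HelMo4} with the coefficient $h^{1/3}$ in front of $(L_{2,\gamma,\theta})^2$ replaced by $(1-h^\tau)h^{1/3}$, tracking that this perturbation shifts the outcome by at most $\mathcal O(h^{\tau+1/3})$. You instead write the convex decomposition
\[
P_{1,\gamma,\theta,\tau}^{h}=(1-h^{\tau})\,P_{1,\gamma,\theta}^{h,0,0}+h^{\tau}\bigl[D_t^2+(t-h^{1/6}L_{1,\gamma,\theta})^2\bigr],
\]
apply the already-proved estimate of Remark~\ref{rem:op-P2} to the first piece, and bound the second piece by $\Theta_0$ via the fiberwise de\,Gennes inequality $D_t^2+(t-\xi)^2\geq\Theta_0$. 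This is a clean black-box reduction: it avoids reopening the proof in \cite{HelMo4} and makes transparent why the extra error is exactly $h^{\tau+1/3}$ times the bounded quantity $c^{\rm conj}(\gamma,\theta)$. The paper's approach, on the other hand, would in principle allow finer control if one ever needed to track how the \emph{constants} in the $h^{3/8}$ and $h^{\delta+1/12}$ terms depend on the modified coefficient, but for the statement at hand your argument is both sufficient and more economical. Your justification of the operator inequality via Fourier transform in $s$ followed by the gauge change $u\mapsto e^{-i\phi(r)}u$ (reducing $L_{1,\gamma,\theta}(\eta)$ to $\sin\theta\,D_r$ when $\sin\theta\neq0$, or to a multiplication operator when $\sin\theta=0$) is the right way to make that step rigorous.
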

Note that the estimate in Proposition~\ref{lemestmodzone2} holds without  constraint on the
support of the function in $s$. This will not be the  case for $(\eta,\zeta)\not=0$.\\
We now compare $ \langle P_{1;\gamma,\theta}^{h,\eta,\zeta} u,u\rangle$ and   $\langle  P_{1;\gamma,\theta,\tau}^{h} u,u\rangle$  when 
\[ u\in C_0^\infty\big(\,]-C_0h^{\delta -\frac13}, C_0h^{\delta-\frac13 }[\times\,]-C_0h^{\delta-\frac13 }, C_0h^{\delta-\frac13 }[ \times \overline{\R_+}\,\big).\]
and $\eta,\zeta$ satisfies \eqref{eq:eta..<M}.

Let us fix 
\begin{equation}\label{eq:assdeltatau}
\delta\in\,]\frac14,\frac13[ \mbox{  and  } \tau\in]0,\frac1{6}[\,.
\end{equation}
The estimates below  hold uniformly  with respect to $u$, $\theta\in\R$ and $\eta,\zeta,\gamma$  satisfying \eqref{eq:eta..<M}.\\
Comparing $L_{2,\gamma,\theta}^{h,\eta,\zeta}$ and $L_{2,\gamma,\theta}$ in \eqref{eq.L2=L20}, we find\footnote{We use $2ab\leq \varepsilon a^2+\varepsilon^{-1}b^2$ with $\varepsilon=h^\tau$,  $a=\|L_{2,\gamma,\theta}^{h,\eta,\zeta}u\|$  and $b=\|L_{2,\gamma,\theta}u\|^2$.}, for all $\tau>0$,
\[ \langle (L_{2,\gamma,\theta}^{h,\eta,\zeta})^2u,u\rangle =\|L_{2,\gamma,\theta}^{h,\eta,\zeta}u\|^2\geq (1-h^\tau)\|L_{2,\gamma,\theta}u\|^2+(1-h^{-\tau})\|(L_{2,\gamma,\theta}^{h,\eta,\zeta}-L_{2,\gamma,\theta})u\|^2\,.\]
Consequently,
\begin{multline}\label{n4aa0}
\langle  P_{1;\gamma,\theta}^{h,\eta,\zeta} u,u\rangle \geq \langle \big(D_t^2 + (t  -  h^\frac 16    L_{1;\gamma,\theta} )^2 +
(1-h^\tau)h^{\frac 13}  ( L_{2;\gamma,\theta})^2\big)u,u\rangle\\ - h^{\frac 13-\tau} 
||( L_{2;\gamma,\theta}^{h,\eta,\zeta}-L_{2;\gamma,\theta} )u||^2 \;.
\end{multline}
This implies  (see \eqref{eq.L2=L20} and  the condition on the support of $u$),
\begin{equation}\label{n4aa00} 
\langle  P_{1;\gamma,\theta}^{h,\eta,\zeta} u,u\rangle \geq  \langle P_{1;\gamma,\theta,\tau}^h  u,u\rangle   - C (\eta^2+\zeta^2)h^{2\delta-\tau}
||tu||^2 \;,
\end{equation}
  where we used  (see \eqref{eq.L2=L20}) 
$$  L_{2;\gamma,\theta}^{h,\eta,\zeta} - L_{2;\gamma,\theta} =h^{1/6} t\, \mathcal O( (|s|+|r|))=t \, \mathcal O(h^{\delta-\frac16})$$  in the support of $u$.\\
By  \eqref{eq:var} and \eqref{n4aa00} we have
\begin{multline}\label{n4aa1}
\langle   P^{h,\eta,\zeta}_{1;\gamma,\theta} u,u\rangle   \geq   \big(\Theta_0 +c^{\rm conj}(\gamma, \theta)h^{1/3} 
- C (h^{\frac{3}{8}} +h^{\delta+\frac1{12}}+h^\tau)\big) \| u\| ^2\\
  -  C(\eta^2+\zeta^2)  h^{2\delta-\tau}\|tu\|^2 \;.
\end{multline}
 Note that by \eqref{eq:assdeltatau} we have
\[ h^{\frac{3}{8}} +h^{\delta+\frac1{12}}+ h^{\tau+\frac13} +h^{2\delta-\tau}=\mathcal O(h^{\frac13+\varsigma})\,,\]
for some $\varsigma=\varsigma(\delta,\tau)>0$. \\
Consequently,  there exist $C$, $\varsigma >0$ and $h_0$ such that, $\forall h\in ]0,h_0]$,
\begin{equation}\label{n4aa2}
\langle   P_{1;\gamma,\theta,\tau}^{h,\eta,\zeta} u,u\rangle\geq \big(\Theta_0 +c^{\rm conj}(\gamma, \theta)h^{1/3} 
- C h^{\frac13+\varsigma}\big) \| u\| ^2-  C h^{\frac13+\varsigma}\|tu\|^2\,,
\end{equation}
for any $u\in C_0^\infty (\,]-Ch^{\delta -\frac 13}, Ch^{\delta -\frac
  13}[^2\times\overline{\R_+}\,)$. \\
  By coming back to the initial coordinates, we get the following generalization  of Proposition~\ref{lemestmodzone2}.
\begin{proposition}\label{lemestmodzone2bis} \
Let $C_0,M>0$ and $\delta  \in ]\frac 14, \frac{1}{3}[$ be given. 
    There exist  positive constants $C$, $h_0$, and $\varsigma$, such
that, 
for all $h\in ]0,h_0]$, $\theta\in\R$ and $\gamma, \eta,\zeta \in[-M,M]$,  
  we have, for any  $u\in C_0^\infty (]-C_0h^{\delta }, C_0h^{\delta }[^{\,2} \, \times \,
\overline{\R^+})$,
\begin{equation}\label{estglob2bis} 
\langle P_{0,\gamma,\theta}^{h,\eta,\zeta} u\, ,\,  u \rangle 
\geq \big(h\Theta_0+h^{\frac 43} c^{\rm conj}(\gamma, \theta) -
 C h^{\frac 43 +\varsigma}\big)\| u\| ^2- C h^{\frac13+\varsigma}\|tu\|^2 . 
\end{equation} 
\end{proposition}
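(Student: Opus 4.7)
The plan is to simply undo the scaling \eqref{newscaling} so as to transfer the estimate \eqref{n4aa2}, which is the proposition's conclusion at the level of the rescaled operator $P_{1;\gamma,\theta}^{h,\eta,\zeta}$, to the original operator $P_{0;\gamma,\theta}^{h,\eta,\zeta}$. All the substantive analytic work has already been carried out upstream: the comparison with the $\tau$-perturbed model $P_{1;\gamma,\theta,\tau}^h$ via \eqref{n4aa0}--\eqref{n4aa00}, combined with the bound \eqref{eq:var} (itself a slight variation of Proposition~\ref{lemestmodzone2}), yields \eqref{n4aa2} in exactly the required form, with the right uniformity in $\theta\in\R$ and in $\gamma,\eta,\zeta\in[-M,M]$ under the window \eqref{eq:assdeltatau} on $\delta$ and $\tau$.

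Concretely, given $u\in C_0^\infty(\,]-C_0 h^\delta,C_0 h^\delta[^{\,2}\times\overline{\R_+}\,)$, I set
\[
\tilde u(\hat r,\hat s,\hat t):=u(h^{1/3}\hat r,\,h^{1/3}\hat s,\,h^{1/2}\hat t),
\]
which is supported in $\,]-C_0 h^{\delta-1/3},C_0 h^{\delta-1/3}[^{\,2}\times\overline{\R_+}$ and is therefore eligible for \eqref{n4aa2}. The Jacobian of the scaling is $h^{1/3}\cdot h^{1/3}\cdot h^{1/2}=h^{7/6}$, hence $\|u\|^2=h^{7/6}\|\tilde u\|^2$, while $\hat t=t/h^{1/2}$ gives $\|tu\|^2=h^{13/6}\|\hat t\,\tilde u\|^2$. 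Moreover, division of \eqref{n1a} by $h$ followed by the scaling is precisely what defines $P_{1;\gamma,\theta}^{h,\eta,\zeta}$, so
\[
\langle P_{0;\gamma,\theta}^{h,\eta,\zeta}u,\,u\rangle \;=\; h\cdot h^{7/6}\,\langle P_{1;\gamma,\theta}^{h,\eta,\zeta}\tilde u,\,\tilde u\rangle.
\]

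Multiplying \eqref{n4aa2} by $h^{1+7/6}$ and substituting these three identities, the prefactors arrange so that the $\|u\|^2$-term picks up a single factor of $h$---turning $\Theta_0$ into $h\Theta_0$, the correction $c^{\rm conj}(\gamma,\theta)\,h^{1/3}$ into $c^{\rm conj}(\gamma,\theta)\,h^{4/3}$, and the error into $Ch^{4/3+\varsigma}$---while in the weighted term the factors $h\cdot h^{7/6}\cdot h^{-13/6}$ collapse to $1$, leaving precisely $Ch^{1/3+\varsigma}\|tu\|^2$. This delivers \eqref{estglob2bis} with the same constants $C$, $h_0$, $\varsigma$, and with the uniformity in $\theta$ and in $\gamma,\eta,\zeta\in[-M,M]$ inherited verbatim from \eqref{n4aa2}. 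The only genuine obstacle was the derivation of \eqref{n4aa2} itself, already settled above; what remains is strictly bookkeeping of the scaling factors, and I expect no new difficulty at this step.
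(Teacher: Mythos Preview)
Your proposal is correct and coincides with the paper's own argument: the paper derives \eqref{n4aa2} and then states ``By coming back to the initial coordinates, we get the following generalization of Proposition~\ref{lemestmodzone2},'' which is precisely the scaling bookkeeping you have written out in detail. Your tracking of the Jacobian factors and the resulting powers of $h$ is accurate, and the uniformity in the parameters is indeed inherited from \eqref{n4aa2}.
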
 
Note here that the last term will be small when considering localized states satisfying \eqref{eq:dec-gs1}.
\section{Localization of bound states}\label{sec:loc}

We  recall that  the bound states of the operator $P^h_\Ab$ in \eqref{eq:op} are localized on the boundary  near the curve   where the magnetic field is tangent  to the boundary $\partial\Omega$.  The localization  is related with 
the analysis of a family of   model operators in the  half-space \cite{LuPa5}.

Consider $\R_+^3:=\{(x_1,x_2,x_3)\in\R^3\,|\,x_1>0\}$ and  the Neumann realization in $\R_+^3$ of the operator,
\[H(\nu)=D_{x_1}^2+D_{x_2}^2+(D_{x_3}+x_1\cos\nu-x_2\sin\nu)^2\,, \]
where $\nu\in[-\frac\pi2,\frac\pi2]$. \\More precisely, $H(\nu)$ is self-adjoint in    $L^2(\mathbb R_+^3)$ with the following domain
\[{\rm Dom}\big(H(\nu)\big)=\{u\in L^2(\mathcal \R^3_+)\,|\,H(\nu)u \in L^2(\mathcal \R^3_+),~
\partial_{x_1}u|_{x_1=0}=0 \}\,. \]
   We denote by 
\begin{equation}\label{eq:gs-hp}
\sigma(\nu)=\inf_{\nu\in[-\frac\pi2,\frac\pi2]}{\rm  spec}\,\big(H(\nu)\big)\,.
\end{equation}

We gather some properties of the  lowest eigenvalue $\sigma(\nu)$ (see \cite{LuPa5}, \cite{HelMo2}, and  \cite[Sec.~3.3]{HelMo4}):
\begin{proposition}\label{prop:gs-hp}~
The following properties hold for the lowest eigenvalue $\sigma(\nu)$ of $H(\nu)$:
\begin{itemize}
\item For all $\nu\in[-\frac\pi2,\frac\pi2]$, $\sigma(-\nu)=\sigma(\nu)$.
\item $[0,\frac\pi2]\ni\nu\mapsto\sigma(\nu)$ is monotone increasing and $\sigma(0)=\Theta_0$.
\item $\sigma(\nu)\geq\Theta_0\cos^2\nu+\sin^2\nu$.
\item As $\nu\to0$, $\sigma(\nu)=\Theta_0+\sqrt{\delta_0}\,|\nu|+\mathcal O(\nu^2)$.
\end{itemize}
Here we recall that $\Theta_0$ and $\delta_0$ are introduced in \eqref{eq:Th0}.
\end{proposition}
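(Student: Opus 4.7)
The plan is to establish the four listed properties in order, exploiting the translation invariance of $H(\nu)$ in $x_2$ and $x_3$ to reduce, via partial Fourier transforms, to simpler one- and two-dimensional fibre operators.

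The symmetry $\sigma(-\nu) = \sigma(\nu)$ follows from the unitary reflection $u(x_1, x_2, x_3) \mapsto u(x_1, -x_2, x_3)$, which intertwines $H(\nu)$ and $H(-\nu)$. For $\sigma(0) = \Theta_0$, I would take partial Fourier transforms in both $x_2$ and $x_3$, realizing $H(0)$ as a direct integral of $D_{x_1}^2 + \xi_2^2 + (x_1 + \xi_3)^2$ on $L^2(\R_+)$ with Neumann condition at $x_1 = 0$; the infimum of the fibre spectra is $\inf_{\xi_2,\xi_3}\bigl(\xi_2^2 + \mu(\xi_3)\bigr) = \mu(\xi_0) = \Theta_0$, where $\mu$ and $\xi_0$ are as in \eqref{eq:Th0}.

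For the lower bound $\sigma(\nu) \geq \Theta_0\cos^2\nu + \sin^2\nu$, I would perform the partial Fourier transform in $x_3$ alone, reducing to the family $h(\nu, \xi_3) = D_{x_1}^2 + D_{x_2}^2 + W^2$ on $\R_+\times\R$ with $W = \xi_3 + x_1\cos\nu - x_2\sin\nu$; translation invariance in $x_2$ (when $\sin\nu \ne 0$) makes the infimum of the fibre spectrum independent of $\xi_3$. I then split the quadratic form, for any $\alpha \in [0,1]$ with $\beta = 1 - \alpha$, as
\begin{equation*}
q(u) = \alpha\bigl(\|D_{x_1}u\|^2 + \|Wu\|^2\bigr) + \beta\bigl(\|D_{x_2}u\|^2 + \|Wu\|^2\bigr) + \beta\|D_{x_1}u\|^2 + \alpha\|D_{x_2}u\|^2.
\end{equation*}
The first bracket, viewed fibrewise in $x_2$, is a rescaled de Gennes operator on $\R_+$ with Neumann condition and is bounded below by $\sqrt{\alpha}\,|\cos\nu|\,\Theta_0$; the second, fibrewise in $x_1$, is a shifted harmonic oscillator on $\R$ bounded below by $\sqrt{\beta}\,|\sin\nu|$. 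Dropping the last two nonnegative terms and optimizing $\alpha\in[0,1]$ produces $\sigma(\nu)\ge\sqrt{\Theta_0^2\cos^2\nu + \sin^2\nu}$, which dominates the claimed lower bound because the difference of squares equals $\cos^2\nu\sin^2\nu(1-\Theta_0)^2 \ge 0$. Monotonicity of $\sigma$ on $[0,\pi/2]$ then follows by combining this bound with a matching upper bound from a product trial state built from $\varphi_0(x_1)$ in the de Gennes direction and a Gaussian in the transverse direction.

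The main obstacle will be the sharp two-term asymptotic $\sigma(\nu) = \Theta_0 + \sqrt{\delta_0}\,|\nu| + \mathcal O(\nu^2)$, because the elementary lower bound above only gives $\mathcal O(\nu^2)$ accuracy and hence misses the non-smooth $|\nu|$-correction. The plan is a Feshbach/Grushin reduction: in the fibre operator $h(\nu,\xi_3)$, conjugate by $e^{-i\xi_0 x_3}$ to bring the de Gennes minimum to zero momentum, then project onto the span of $\varphi_0$ in the $x_1$-direction using the regularized resolvent $\mathcal R_0$ from \eqref{eq:Reg-res}. Using \eqref{eq:mu'=0} to kill the first-order correction, the residual effective one-dimensional operator in the transverse direction is a harmonic oscillator whose frequency is governed by $\mu''(\xi_0) = \delta_0$, so that its ground state contributes the $\sqrt{\delta_0}\,|\nu|$ term. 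A matching upper bound from the corresponding Gaussian trial state and a careful control of the Feshbach remainder uniformly in $\nu$ finish the proof; the non-smoothness at $\nu = 0$ reflects the fact that $H(0)$ has purely essential spectrum starting at $\Theta_0$, so regular perturbation theory is not directly available.
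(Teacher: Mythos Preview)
The paper does not actually prove this proposition; it is stated as a compilation of known facts with pointers to \cite{LuPa5}, \cite{HelMo2}, and \cite[Sec.~3.3]{HelMo4}. So there is no in-paper argument to compare against, and your proposal must be judged on its own merits.

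Your treatment of the symmetry and of $\sigma(0)=\Theta_0$ is correct, and your sketch for the two-term asymptotics via a Grushin/Feshbach reduction onto $\varphi_0$ is the standard route taken in the cited references.

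There are, however, two genuine issues. First, the splitting you write for the lower bound is inconsistent with the estimates you then claim for the brackets. As written, $\alpha\bigl(\|D_{x_1}u\|^2+\|Wu\|^2\bigr)$ is, fibrewise in $x_2$, exactly $\alpha$ times a de\,Gennes form and is therefore bounded below by $\alpha\,|\cos\nu|\,\Theta_0\,\|u\|^2$, not by $\sqrt{\alpha}\,|\cos\nu|\,\Theta_0\,\|u\|^2$; likewise the second bracket gives $\beta\,|\sin\nu|$. Optimising in $\alpha$ then yields only $\max\bigl(\Theta_0|\cos\nu|,|\sin\nu|\bigr)$, which is \emph{strictly smaller} than $\Theta_0\cos^2\nu+\sin^2\nu$ for intermediate angles (take $\nu=\pi/4$). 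The splitting that genuinely produces the $\sqrt{\alpha}$ and $\sqrt{\beta}$ you want is to split only the potential term,
\[
q(u)=\bigl(\|D_{x_1}u\|^2+\alpha\|Wu\|^2\bigr)+\bigl(\|D_{x_2}u\|^2+\beta\|Wu\|^2\bigr),
\]
after which each bracket is a rescaled oscillator with effective coupling $\sqrt{\alpha}\,|\cos\nu|$, resp.\ $\sqrt{\beta}\,|\sin\nu|$, and the bound $\sqrt{\Theta_0^2\cos^2\nu+\sin^2\nu}$ follows as you conclude. This is easily repaired.

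Second, and more seriously, your justification of monotonicity is not a proof. Possessing a lower bound for $\sigma(\nu)$ together with an upper bound from a product trial state does not force $\nu\mapsto\sigma(\nu)$ to be increasing unless the two bounds coincide for all $\nu$, which they do not. The monotonicity on $[0,\pi/2]$ is a nontrivial statement; in \cite{LuPa5} and \cite{HelMo2} it is obtained by a direct comparison of the fibered quadratic forms after an angle-dependent change of variables, not by sandwiching. Your sketch does not supply any such mechanism, so this step is a real gap.
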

Let us return to the magnetic field in \eqref{eq:B}. Recall that, for $x\in\Omega$, $p(x)\in\partial\Omega$ satisfies  ${\rm dist}(x,\partial\Omega)={\rm dist}(x,p(x))$, and it is uniquely defined when $x$ is sufficiently  close to the boundary. For all $x\in\overline\Omega$, we introduce $\nu(x)\in[-\frac\pi2,\frac\pi2]$  by
\begin{equation}\label{eq:def-nu}
(\Bb\cdot\Nb)(p(x))=\sin\nu(x)\,. 
\end{equation}
Hence $\nu(x)=0$ implies that $\Bb (p(x))$ is tangent to $\partial \Omega$ at $p(x)$, in other words that $x$ belongs to $\Gamma$ (see \eqref{eq:1.4}).
Now we recall the following lower bound  related to the operator $P_\Ab^h$ established in \cite[Thm.~4.3]{HelMo4}:
\begin{proposition}\label{prop:lb-qf}
Under Assumption \eqref{eq:B}, there exist constants $C,h_0>0$ such that, for all $h\in(0,h_0]$ and $u\in H^1(\Omega)$, we have
\[ \int_\Omega|(h\nabla-i\Ab)u|^2dx\geq \int_\Omega (hW_h(x)-Ch^{5/4})|u(x)|^2dx\,,\]
where
\[ W_h(x)=\begin{cases}
1&{\rm if~}{\rm dist}(x,\partial\Omega)\geq 2h^{3/8}\\
\,\sigma(\nu(x))&{\rm if~}{\rm dist}(x,\partial\Omega)\leq 2h^{3/8}
\end{cases}\,.\]
\end{proposition}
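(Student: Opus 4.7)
The plan is to prove the bound via an IMS partition of unity at scale $h^{3/8}$, reducing to model problems cell-by-cell. First I construct a partition of unity $\{\chi_{j,h}\}_j$ subordinate to a cover of $\overline\Omega$ by balls of radius $\sim h^{3/8}$ centered at points $\{x_j\}$, normalized so that $\sum_j \chi_{j,h}^2 \equiv 1$ on $\overline\Omega$ and $\sum_j |\nabla \chi_{j,h}|^2 \leq C h^{-3/4}$. The IMS formula then yields
\begin{equation*}
q_{\Ab}^h(u) \;=\; \sum_j q_{\Ab}^h(\chi_{j,h}\, u) \;-\; h^2 \sum_j \bigl\| |\nabla \chi_{j,h}|\, u \bigr\|^2,
\end{equation*}
whose localization error is at most $C h^{5/4}\|u\|^2$, hence admissible.

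For cells with $\dist(x_j,\partial\Omega)\geq 2h^{3/8}$, the function $\chi_{j,h}u$ is compactly supported in $\Omega$. The standard commutator identity $[hD_{x_k}+A_k,\,hD_{x_\ell}+A_\ell] = -ih B_{k\ell}$ yields, after integration, the bound $q_{\Ab}^h(\chi_{j,h}u)\geq h\int |\Bb|\,|\chi_{j,h}u|^2\,dx = h\|\chi_{j,h}u\|^2$, using $|\Bb|=1$. This matches $hW_h=h$ on the interior region.

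For the remaining boundary cells, I pass to the adapted coordinates $(r,s,t)$ of Section~\ref{sec:coordinates} centered at $p(x_j)\in\partial\Omega$, perform the gauge fixing \eqref{eq:A3=0}, and Taylor-expand $\tilde\Ab$ to first order at the origin. The leading part is a constant-field magnetic potential on the half-space $\{t>0\}$ with angle $\nu(x_j)$ between $\Bb$ and the boundary, i.e.\ precisely the model operator $H(\nu(x_j))$ whose Neumann infimum equals $h\sigma(\nu(x_j))$ after the semiclassical scaling. The discarded terms --- the quadratic Taylor remainder of $\tilde\Ab$, the cross terms induced by the gauge correction, and the metric perturbations from \eqref{eq:det-g}--\eqref{eq:inv-g} --- are each of relative size $O(h^{3/8})$ on a cell of diameter $h^{3/8}$ and produce, via Cauchy--Schwarz against the leading quadratic form, contributions of order $Ch^{5/4}\|\chi_{j,h}u\|^2$. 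Finally, the Lipschitz behavior of $\sigma$ near $0$ from Proposition~\ref{prop:gs-hp} yields $|\sigma(\nu(x))-\sigma(\nu(x_j))|\leq Ch^{3/8}$ for $x$ in the cell, so that $h\sigma(\nu(x_j))$ can be replaced by the pointwise $hW_h(x)=h\sigma(\nu(x))$ at a cost absorbed in $Ch^{5/4}$.

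The main obstacle is the boundary-cell step: the scale $h^{3/8}$ must be chosen so that all three perturbations --- the IMS commutator, the Taylor truncation of $\tilde\Ab$, and the flat-metric approximation --- simultaneously fit below $Ch^{5/4}$. The threshold $2h^{3/8}$ in the definition of $W_h$ accounts for the cell radius plus the overlap of the partition, so that any cell supported in $\{\dist(\cdot,\partial\Omega)\geq 2h^{3/8}\}$ is compared to the bulk ($W_h=1$), while any cell meeting the boundary strip is compared to the half-space model centered at the nearest foot $p(x_j)$.
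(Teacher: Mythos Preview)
The paper does not actually prove Proposition~\ref{prop:lb-qf}; it merely recalls it as \cite[Thm.~4.3]{HelMo4}. Your outline is precisely the standard argument carried out there: an IMS partition at scale $h^{3/8}$ so that the localization error $h^2\cdot h^{-3/4}=h^{5/4}$ matches the Taylor/metric errors in the boundary cells, the bulk bound $q_\Ab^h(v)\geq h\int|\Bb|\,|v|^2$ for compactly supported $v$, and reduction to the half-space model $H(\nu)$ near $\partial\Omega$.

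Two small points to tighten. First, the coordinates of Section~\ref{sec:coordinates} are built specifically around the curve $\Gamma$; for a generic boundary cell you only need ordinary boundary--normal coordinates $(y',t)$ with $t=\dist(x,\partial\Omega)$, not the full $(r,s,t)$ frame. Second, your replacement of $\sigma(\nu(x_j))$ by $\sigma(\nu(x))$ requires Lipschitz control of $\sigma$ on all of $[-\tfrac\pi2,\tfrac\pi2]$, not only near $0$; this is fine (one can use the third item of Proposition~\ref{prop:gs-hp} together with smoothness on $(0,\tfrac\pi2]$), but the phrase ``Lipschitz behavior of $\sigma$ near $0$'' undersells what is needed.
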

 If additionally $u\in H^1_0(\Omega)$, we have for some positive constant $C_0$  the stronger lower bound
\[ \int_\Omega|(h\nabla-i\Ab)u|^2dx\geq (h-C_0h^{5/4}) \int_\Omega |u|^2\,dx\,.\]
 Combining the lower bound in  Proposition~\ref{prop:lb-qf} with the following leading term expansion of the lowest eigenvalue (see \cite[Thm.~4.4]{HelMo4})
\begin{equation}\label{eq:abc}
\lambda_1^N(\Ab,h)=\Theta_0h+o(h)\,,
\end{equation}
we get decay estimates for the  ground states.
Let us recall these localization estimates (see \cite[Sec.~9.4]{FoHe2} for details). 
\begin{proposition}\label{prop:dec-gs}~\\
Given  $M>0$, there exists a  positive constant $\alpha$ such that,  if $u_h$ is a normalized bound state  of $\mathcal P_h$    with eigenvalue $\lambda(h)\leq Mh$, then as $h\to0_+$,
\begin{equation}\label{eq:dec-prop}
\int_\Omega\Big(|u_h(x)|^2+h^{-1}|(h\nabla-i\Ab)u_h|^2 \Big)\exp\Big(\frac{\alpha \,{\rm dist}(x,\partial\Omega)}{h^{1/2}}\Big)dx=\mathcal O(1)\,.
\end{equation}
 Furthermore, there exist  constants $\alpha_1,\epsilon_0>0$ such that, as $h\to0_+$,
\[\int_{\{{\rm dist}(x,\partial\Omega)<\epsilon_0\}} \Big(|u_h(x)|^2+h^{-1}|(h\nabla-i\Ab)u_h|^2 \Big)\exp\Big(\frac{\alpha_1 \,d_\Gamma (x)}{h^{1/4}}\Big)dx=\mathcal O(1)\,, \]
where
\begin{equation}\label{eq:d-Gam}
d_\Gamma(x)={\rm dist}_{\partial\Omega}(p(x),\Gamma)\,,
\end{equation}
and ${\rm  dist}_{\partial\Omega}$ is the geodesic distance on $\partial\Omega$.
\end{proposition}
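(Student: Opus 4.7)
The plan is to derive both estimates by the standard magnetic Agmon method, combining the coercivity of Proposition~\ref{prop:lb-qf} with the leading-order asymptotics \eqref{eq:abc}. The fundamental identity is the magnetic IMS formula: for any real-valued Lipschitz weight $\Phi$ on $\overline{\Omega}$ and any $u\in H^1(\Omega)$,
\begin{equation*}
q_\Ab^h(e^{\Phi}u) \;=\; \mathrm{Re}\langle P_\Ab^h u,\,e^{2\Phi}u\rangle + h^2\|(\nabla\Phi)\,e^{\Phi}u\|^2\,.
\end{equation*}
Applied to the normalized eigenfunction $u_h$ with eigenvalue $\lambda(h)\leq Mh$ and combined with the coercive lower bound of Proposition~\ref{prop:lb-qf}, this yields the master inequality
\begin{equation*}
\int_\Omega\big(hW_h(x)-\lambda(h)-h^2|\nabla\Phi|^2-Ch^{5/4}\big)\,e^{2\Phi}|u_h|^2\,dx \;\leq\; 0\,.
\end{equation*}

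For the first (normal) estimate I take $\Phi(x)=\alpha\,d(x,\partial\Omega)/h^{1/2}$ with $\alpha$ small, so $h^2|\nabla\Phi|^2\leq\alpha^2 h$. On the interior $\{d\geq 2h^{3/8}\}$ one has $W_h=1$, giving a strictly positive coefficient $(1-\Theta_0-\alpha^2)h+o(h)$ once $\alpha^2<1-\Theta_0$; on the boundary strip $\{d<2h^{3/8}\}$ the bound $W_h\geq\Theta_0$ from Proposition~\ref{prop:gs-hp} makes the coefficient only $\mathcal O(h)$ in size. A standard splitting combined with truncation of the weight (and $\|u_h\|_{L^2}=1$) then produces $\int_\Omega e^{2\Phi}|u_h|^2\,dx=\mathcal O(1)$. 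The gradient contribution is read off from $(h\nabla-i\Ab)(e^\Phi u_h)=e^\Phi(h\nabla-i\Ab)u_h+h(\nabla\Phi)e^\Phi u_h$ together with the identity $q_\Ab^h(e^\Phi u_h)=\lambda(h)\|e^\Phi u_h\|^2+h^2\|(\nabla\Phi)e^\Phi u_h\|^2=\mathcal O(h)$.

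For the second (tangential) estimate, I first cut off to the boundary layer $\{d<\epsilon_0\}$ (the complement already being under control by the first estimate), and choose $\Phi_1(x)=\alpha_1 d_\Gamma(x)/h^{1/4}$, so that $h^2|\nabla\Phi_1|^2\leq\alpha_1^2 h^{3/2}$. Assumption (C1), together with compactness of $\Gamma$, provides $|(\Bb\cdot\Nb)(p(x))|\geq c\,d_\Gamma(x)$ in a fixed neighborhood of $\Gamma$, hence $|\nu(x)|\geq c'\,d_\Gamma(x)$; combined with the linear slope $\sigma(\nu)=\Theta_0+\sqrt{\delta_0}|\nu|+\mathcal O(\nu^2)$ from Proposition~\ref{prop:gs-hp}, this yields $\sigma(\nu(x))\geq\Theta_0+c''\min(d_\Gamma(x),K_0)$ on the boundary layer. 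Inserting this into the master inequality gives
\begin{equation*}
hW_h-\lambda(h)-h^2|\nabla\Phi_1|^2\;\geq\; hc''\min(d_\Gamma,K_0)-o(h)-\alpha_1^2 h^{3/2}\,,
\end{equation*}
which is non-negative on $\{d_\Gamma\geq K h^{1/2}\}$ for $K$ sufficiently large. On the complementary tube $\{d_\Gamma<Kh^{1/2}\}$ we have $\Phi_1\leq\alpha_1 K h^{1/4}\to 0$, so $e^{2\Phi_1}$ is uniformly bounded and its contribution is absorbed against $\|u_h\|^2=1$. This yields $\int e^{2\Phi_1}|u_h|^2\,dx=\mathcal O(1)$, and the gradient part follows as before.

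The main technical obstacle is the bookkeeping in the transition regions where the Agmon weight fails to produce positive definiteness — the strip $\{d\leq 2h^{3/8}\}$ for the normal estimate, and the tube $\{d_\Gamma\leq Kh^{1/2}\}$ for the tangential one — and the choice of weights that are effective outside these zones while being harmless inside. The $h^{1/4}$ tangential scale is pinned down by the interplay between the linear growth $\sigma(\nu)-\Theta_0\geq c|\nu|$ near $\nu=0$ and the quantitative non-tangency $|d^T(\Bb\cdot\Nb)|\geq c>0$ on $\Gamma$ from (C1); weakening either assumption would force a larger tangential localization scale.
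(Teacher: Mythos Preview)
Your approach is the standard Agmon-estimate argument and is precisely what the paper invokes (it does not give its own proof, referring instead to \cite[Sec.~9.4]{FoHe2}). The normal-direction estimate is handled correctly.

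There is, however, a computational slip in the tangential estimate. In passing from the master inequality to your displayed lower bound for $hW_h-\lambda(h)-h^2|\nabla\Phi_1|^2$, you have silently dropped the $-Ch^{5/4}$ error coming from Proposition~\ref{prop:lb-qf}. Restoring it, the coefficient on $\{d_\Gamma\geq Kh^{1/2}\}$ is at best
\[
c''Kh^{3/2}-Ch^{5/4}-o(h)-\alpha_1^2h^{3/2}\,,
\]
which is \emph{negative} for small $h$ since $h^{3/2}=o(h^{5/4})$; so your good region is not good. The correct splitting scale is $d_\Gamma\sim h^{1/4}$: on $\{d_\Gamma\geq Kh^{1/4}\}$ one has $hc''d_\Gamma\geq c''Kh^{5/4}$, which for $K$ large dominates the $Ch^{5/4}$ error; on the complementary tube $\{d_\Gamma<Kh^{1/4}\}$ one has $\Phi_1<\alpha_1 K$, so $e^{2\Phi_1}\leq e^{2\alpha_1 K}$ is bounded by a fixed constant (not tending to $1$, but that suffices). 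A second, related point: to absorb the $\lambda(h)-\Theta_0 h$ term on the good region you need it to be $O(h^{5/4})$, which does not follow from the bare $o(h)$ of \eqref{eq:abc}; it does follow from the rough quasi-mode upper bound $\lambda_1^N(\Ab,h)\leq\Theta_0 h+O(h^{4/3})$, available without any localization (cf.\ Section~\ref{sec:ub}).
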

Hence we have two levels of localization, first a strong one near $\partial \Omega$ and then an additional  but  weaker one near $\Gamma$.  Along the proof of  Theorem~\ref{thm:main}, we will only use \eqref{eq:dec-prop} and  generalizations/consequences of it, as explained in  the below remark.
\begin{remark}[Applications of Proposition~\ref{prop:dec-gs}]\label{rem:genDecE}~\\ 
Let $u_h$ be   a normalized ground state of $P_\Ab^h$.
\begin{enumerate}
\item  By \eqref{eq:abc},   the hypothesis in Proposition~\ref{prop:dec-gs} holds, hence the ground state $u_h$ satisfies \eqref{eq:dec-prop} and \eqref{eq:d-Gam}.
\item
Pick an arbitrary point $x_0\in\Gamma$. In the coordinates introduced in  \eqref{eq:label(r,s,t)}, where $t(x)={\rm dist}(x,\partial\Omega)$, $r(x)=d_\Gamma(x)$ 
 and $u_h(x)=\tilde u_h(r,s,t)$ (see \eqref{eq:tilde-u}), we deduce from \eqref{eq:dec-prop} the  following weaker, but quite useful estimates.
For any $n\geq 0$, 
\begin{equation}\label{eq:dec-gs1}
\int_{\tilde V_{0}} t^n|\tilde u_h|^2dsdrdt=\mathcal  O(h^{n/2} ),
\end{equation}
and
\begin{equation}\label{eq:dec-gs2}
\int_{\tilde V_{0}} t^n |(h\nabla_{r,s,t}-\tilde\Ab)\tilde u_h|^2drdsdt=\mathcal  O(h^{ 1+\frac n2} )\,,\\  
\end{equation}
where $\tilde V_{0}:=\tilde V_{x_0}$ and $\tilde\Ab$ are introduced in \eqref{eq:tilde-V-x0}  and  \eqref{eq:tA} respectively.
\end{enumerate}
\end{remark}

\section{Estimating the quadratic form}\label{sec:qf}

\subsection{A comparison estimate}
 We  fix  $\delta$ and $\epsilon_2$ satisfying 
\begin{equation}\label{eq:cond-delta}
\frac{5}{18}<\delta<\frac13 \mbox{ and }  0 < \epsilon_2 < 1 \,.
\end{equation}
We also fix  $R_0>0$,   $h_0>0$, $x_0\in\Gamma$ and introduce for $h\in (0,h_0]$ the  set
\begin{multline}\label{eq:Q(x0)}
Q_h(x_0, R_0,\delta,\epsilon_2)\\
=\{x\in\Omega\,:\,|r(x)-r_0|\leq R_0h^\delta,~|s(x)-s_0|\leq R_0h^\delta,~0<t(x)<\epsilon_2\}
\end{multline}
where  $(r(x),s(x),t(x))$ are introduced in \eqref{eq:Phi-x0} and,  since $ x_0\in\Gamma$,  
\begin{equation}\label{eq.def.y0}
 y^{0}:=(r_0,s_0,t_0):=(r(x_0), s(x_0),t(x_0))=(0,s(x_0),0)\,.
\end{equation}
For simplicity, we omit most of the time the reference to $\delta$ and  $\epsilon_2$.

Let $\tilde\Ab=(\tilde A_1^{(2)},\tilde A_2^{(2)},\tilde A_3^{(2)})$ be the magnetic potential associated with  $\Ab$ via \eqref{eq:tA}, with  $y=(y_1,y_2,y_3)=(r,s,t)$ (see \eqref{eq:label(r,s,t)}). We introduce the following magnetic potential
\begin{equation}\label{eq:tilde-A-2}
\tilde\Ab^{(2)}(y) =\sum_{|\beta|\leq 2}\frac{\partial^\beta\tilde\Ab}{\partial y^\beta}(y^0)\frac{(y-y^0)^\beta}{\beta!},
\end{equation}
which is the quadratic Taylor expansion of $\tilde\Ab$ at $ y^0$.
We introduce the quadratic form  associated with   the magnetic potential $\tilde\Ab^{(2)}$ as follows
\begin{multline*}
q_{\tilde\Ab^{(2)}}^h(u)=\int_{\tilde Q_h(x_0,R_0)}(1-r\kappa_g(x_0))\Big(|(hD_t-\tilde A_3^{(2)})u|^2\\
+(1+2r\kappa_g(x_0))|(hD_s-\tilde A_2^{(2)})u|^2+|(hD_r-\tilde A_1^{(2)})u|^2\Big)drdsdt\,, 
\end{multline*}
where 
\begin{equation}\label{eq:tilde-Q}
\tilde Q_h(x_0,R_0,\delta,\epsilon_2)=\{(r,s,t)~:~\max(|r|,|s-s_0|)<R_0h^\delta,~0< t<\epsilon_2\}\,,
\end{equation}
and (see \eqref{eq:kg-kn})
\[\kappa_g(x_0) \mbox{   is the geodesic curvature of } \Gamma \mbox{ at }
x_0\,.\]
The next lemma compares the quadratic forms $u\mapsto q_{\tilde\Ab^{(2)}}^h(u)$ and  $u\mapsto q_\Ab^h(u)$ introduced in \eqref{eq:qf-y}.  The errors that will arise are controlled by the  following energy
\begin{equation}\label{eq:Mh(u)}
 M_h(u)=\sum_{n=0}^6h^{-n/2}\int_\Omega t(x)^n\Big(|u|^2+h^{-1}|(h\nabla-i\Ab)u|^2 \Big)dx\,,\end{equation}
where $t(x)={\rm  dist}(x,\partial\Omega)$.  Notice that,
\begin{equation}\label{eq:cond-u}
\begin{aligned}
{\rm (a)}\quad \int_\Omega|u|^2dx&\leq M_h(u)\,,\\
{\rm (b)} \quad \int_\Omega |(h\nabla-i\Ab)u|^2dx&\leq M_h(u)h\,,\\
{\rm (c)} \quad \int_\Omega t(x)^n \big(|u|^2+h^{-1}|(h\nabla-i\Ab)u|^2\big)dx&\leq  M_h(u)h^{n/2}\quad(1\leq n\leq 6)\,.
\end{aligned}
\end{equation}  
\begin{lemma}\label{lem:10.1}
There exist constants $C,h_0,\varsigma_0>0$ such that, for all $h\in(0,h_0]$ and $u\in H^1(\Omega)$ satisfying 
  $ {\rm supp\,}\,u\subset Q_h(x_0,R_0)$, we have
\begin{equation}\label{eq:aa7.6}
(1-C\, h^{2\delta})q_{\tilde\Ab^{(2)}}^h(u)-CM_h(u)h^{\frac43+\varsigma_0} \leq q_\Ab^h(u)\leq (1+C\, h^{2\delta})q_{\tilde\Ab^{(2)}}^h(u)+CM_h(u)h^{\frac43+\varsigma_0}\,.
\end{equation}
\end{lemma}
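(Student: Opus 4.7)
The plan is to compare $q_{\Ab}^h(u)$ with $q_{\tilde\Ab^{(2)}}^h(u)$ via two successive Taylor expansions around $y^0=(0,s_0,0)$: one of the Riemannian metric coefficients entering \eqref{eq:qf-y} and one of the magnetic potential. I will work throughout in the gauge \eqref{eq:A3=0}, exploit the support restriction $|r|,|s-s_0|\le R_0 h^\delta$, and use the moments of $t$ controlled by $M_h(u)$ via \eqref{eq:cond-u}. Each error source will be allocated either to the multiplicative prefactor $(1\pm Ch^{2\delta})$ or to the additive remainder $CM_h(u)h^{4/3+\varsigma_0}$.

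For the metric step, using \eqref{eq:det-g}-\eqref{eq:alpha(r,s)**}, on the support of $u$ one has
\[|g|^{1/2}=(1-r\kappa_g(x_0))+\mathcal O(h^{2\delta}+t),\quad g^{22}=(1+2r\kappa_g(x_0))+\mathcal O(h^{2\delta}+t),\]
together with $g^{11}=1+\mathcal O(t)$, $g^{12}=\mathcal O(t)$, and still smaller corrections. Substituted into \eqref{eq:qf-y}, the leading pieces reproduce exactly the weights appearing in $q_{\tilde\Ab^{(2)}}^h$. The $\mathcal O(h^{2\delta})$ remainders, multiplying the kinetic density, are absorbed into the $(1\pm Ch^{2\delta})$ prefactor. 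The $\mathcal O(t^k)$ remainders, on integration against $|(h\nabla-i\tilde\Ab)u|^2$, yield $\mathcal O(M_h(u)h^{k/2+1})$ by \eqref{eq:cond-u}(c); the dominant term is $\mathcal O(M_h(u)h^{3/2})$, which is $\le CM_h(u)h^{4/3+\varsigma_0}$ for any $\varsigma_0<1/6$.

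For the magnetic step, set $E_j:=\tilde A_j-\tilde A_j^{(2)}$ ($j=1,2$, with $E_3=0$). Since $\tilde\Ab^{(2)}$ is the order-two Taylor polynomial, $|E_j(y)|\le C|y-y^0|^3\le C(h^{3\delta}+t^3)$ on the support. Expanding
\[|(hD_{y_j}-\tilde A_j)u|^2=|(hD_{y_j}-\tilde A_j^{(2)})u|^2-2E_j\,\mathrm{Re}\bigl(\bar u(hD_{y_j}-\tilde A_j^{(2)})u\bigr)+E_j^2|u|^2,\]
the quadratic term satisfies $\int E_j^2|u|^2\,dy\le CM_h(u)(h^{6\delta}+h^3)\le CM_h(u)h^{4/3+\varsigma_0}$, since $\delta>5/18>2/9$. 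For the cross term, Cauchy-Schwarz gives $|\text{cross}|\le C\|Eu\|\sqrt{q_{\tilde\Ab^{(2)}}^h(u)}$, and the decisive ingredient is the a priori bound
\[q_{\tilde\Ab^{(2)}}^h(u)\le 2q_\Ab^h(u)+2\|Eu\|^2\le CM_h(u)\,h,\]
which follows from \eqref{eq:cond-u}(b) together with $\|Eu\|^2\le CM_h(u)h^{6\delta}\le CM_h(u)h$ (valid because $\delta>1/6$). Inserting this a priori bound,
\[|\text{cross}|\le C\sqrt{M_h(u)h^{6\delta}}\sqrt{M_h(u)h}=CM_h(u)\,h^{3\delta+1/2}.\]

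The hypothesis $\delta>5/18$ is exactly equivalent to $3\delta+1/2>4/3$, so the cross term is bounded by $CM_h(u)h^{4/3+\varsigma_0}$ for some $\varsigma_0>0$. Collecting the three estimates (and taking $\varsigma_0$ as the minimum of those obtained) and rearranging yields \eqref{eq:aa7.6}. I expect the cross term in the magnetic Taylor step to be the main obstacle: a naive Cauchy-Schwarz with weight $h^{2\delta}$ would produce an irreducible residual of order $h^{4\delta}M_h$, which fails to be $\le h^{4/3+\varsigma_0}M_h$ for $\delta<1/3$. The crucial trick is to invoke the a priori bound $q_{\tilde\Ab^{(2)}}^h\le CM_h(u)h$—itself a direct consequence of the definition of $M_h$ combined with the triangle inequality—which upgrades the residual to $h^{3\delta+1/2}M_h$ and matches exactly the threshold $\delta>5/18$ stipulated in \eqref{eq:cond-delta}.
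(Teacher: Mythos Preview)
Your proof is correct and follows essentially the same route as the paper's. The paper invokes the combined metric--and--potential estimate from \cite[Lem.~10.1]{HelMo4} as a black box and then bounds each remainder term using \eqref{eq:cond-u}, but the crucial step is identical to yours: one establishes the a~priori bound $q_{\tilde\Ab^{(2)}}^h(u)=\mathcal O(M_h(u)h)$ via the triangle inequality (the paper writes $q_{\tilde\Ab^{(2)}}^h(u)\le C(q_\Ab^h(u)+\|t^3u\|^2)$), and then the cross term is controlled by $(q_{\tilde\Ab^{(2)}}^h(u))^{1/2}\|Eu\|=\mathcal O(M_h h^{3\delta+1/2})$, which is $o(M_h h^{4/3})$ precisely because $\delta>5/18$. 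Your AM--GM simplification $|E|\le C(h^{3\delta}+t^3)$ in place of the paper's $h^{3\delta}+h^{2\delta}t+h^\delta t^2+t^3$ is a harmless shortcut.
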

\begin{proof}
 Let us recall  two useful estimates whose proof  does not require that the magnetic field $\curl\Ab$ is constant (see \cite[Lem.~10.1]{HelMo4}):
\begin{multline} \label{eq:a7.6}
 q_\Ab^h(u)\geq (1-Ch^{2\delta})q_{\tilde\Ab^{(2)}}^h(u)-C\|t^{1/2}(h D_x-\Ab)u\|^2 \\-C\big(q_{\tilde\Ab^{(2)}}^h(u)\big)^{1/2}\| (h^{3\delta}+h^{2\delta}t+h^\delta t^2+t^3)u\|\\
 -C\| (h^{3\delta}+h^{2\delta}t+h^\delta t^2+t^3)u\|^2\,,
 \end{multline}
 and
\begin{multline}\label{eq:a7.7}
 q_\Ab^h(u)\leq (1+Ch^{2\delta})q_{\tilde\Ab^{(2)}}^h(u)+C\|t^{1/2}(h D_x-\Ab)u\|^2\\
 +C\big(q_{\tilde\Ab^{(2)}}^h(u)\big)^{1/2}\| (h^{3\delta}+h^{2\delta}t+h^\delta t^2+t^3)u\|\\
 +C\| (h^{3\delta}+h^{2\delta}t+h^\delta t^2+t^3)u\|^2\,.
\end{multline}
In the  sequel we use the notation $\mathcal O(c_h h^{\rho+})$ in the following manner
\begin{equation}\label{eq:O-new}
 f_h=\mathcal O(c_h h^{\rho+}) \mbox{ if and only if } \exists\,\epsilon>0 \mbox{ s.t. }f_h=\mathcal O(c_hh^{\rho+\epsilon}).
\end{equation} 
Since we have assumed \eqref{eq:cond-delta}, we have 
$$\min(6\delta\,,\, 2\delta+1\,,\,3\delta+\frac12\,,\,2-2\delta )>\frac43\,.$$
 We can now estimate  the error terms appearing  in \eqref{eq:a7.6} and \eqref{eq:a7.7}. We deduce from \eqref{eq:cond-u}  (a) that
\[\|h^{3\delta} u\|^2=\mathcal O(M_h h^{6\delta})=\mathcal O(M_hh^{\frac43+})\,, \]
  where we write $M_h$ instead of $M_h(u)$ for the sake of simplicity.

Using again \eqref{eq:cond-u} with $n=1$, $n=2$, $n=4$ and $n=6$, we get
\begin{equation*}
\begin{array}{rl}
\|t^{1/2}(hD_x-\Ab)u\|^2 & =  \mathcal O (M_hh^{\frac54}),\\ \|h^{2\delta} tu\|^2 & = \mathcal O (M_hh^{ 4\delta  + 1}) \,,\\
\|h^{\delta} t^2u\|^2& =\mathcal O(M_hh^{2\delta+2} )\,,\\
 \|t^3u\|^2 & = \mathcal O(M_h h^{3 })\,.\end{array}
\end{equation*}
 Consequently,
 \begin{equation}\label{eq:remainder-lb}
 \|t^{1/2}(hD_x-\Ab)u\|^2+\| (h^{3\delta}+h^{2\delta}t+h^\delta t^2+t^3)u\|^2=\mathcal O (M_h h^{\frac43+})\,. 
 \end{equation}
Notice that $|\tilde\Ab-\tilde\Ab^{(2)}|=\mathcal O(h^{3\delta})+\mathcal  O(t^3)$ in $\tilde Q_h(x_0,R_0)$. By the triangle inequality  and  \eqref{eq:qf-y} 
\[ q_{\tilde\Ab^{(2)}}^h(u)\leq C\, \big( q_{\Ab}^h(u)+\|t^3u\|^2\big)\,.\]
So by  using  \eqref{eq:cond-u} we get
\[ q_{\tilde\Ab^{(2)}}^h(u)=\mathcal  O(M_hh)\,.\]
Consequently, the foregoing estimate and \eqref{eq:remainder-lb}  yield,
\[ \big(q_{\tilde\Ab^{(2)}}^h(u)\big)^{1/2}\| (h^{3\delta}+h^{2\delta}t+h^\delta t^2+t^3)u\|=\mathcal  O(M_hh)\,.\]
This finishes the proof of \eqref{eq:aa7.6}.
\end{proof}

\subsection{Normal form}

Recall that we have fixed an arbitrary point $x_0\in\Gamma$ and denoted its coordinates, in the $(r,s,t)$-frame,   by $(0,s_0,0)$.
Let us also recall  that the  magnetic field $\Bb(x_0)$ can be expressed by  \eqref{eq:B-r,t=0}. 

Performing an  appropriate gauge transformation  on the set $\tilde Q_h(x_0,R_0)$ introduced in \eqref{eq:tilde-Q}, will yield  a convenient normal form  of  the magnetic potential $\tilde{\Ab}^{(2)}$ introduced in \eqref{eq:tilde-A-2}. 

\begin{lemma}\label{lem:normal-form}
There exist positive constants $C$ and $\widehat C$, and for all $x_0\in\Gamma$, there exist $\check\kappa,\zeta\in [-\widehat C,\widehat C] $ and a smooth function  $\check p$ on a neighborhood of $\tilde Q_h(x_0,R_0,\delta,\epsilon_2)$, such that,
\[ |\tilde\Ab^{(2)}(r,s,t)-\Ab^{00}(r,s,t)+\nabla \check p(r,s,t)|\leq C\, \big(r^3+t^2+|s-s_0|^3\big)\,,\]
where
\[\Ab^{00}(r,s,t)=\Big(ta_1(r,s),ta_2(r,s)+\frac12\kappa_{n,\Bb}(x_0)r^2,0 \Big),\]
$\kappa_{n,\Bb}(x_0)$ is introduced in \eqref{eq:k-n,B}, and
\begin{align*}
a_1(r,s)&=  \sin\theta(s_0) +\big(\zeta r+\check\kappa (s-s_0)\big)\cos\theta(s_0)\,,\\
a_2(r,s)&=-\cos\theta(s_0)+r\kappa_g(x_0)\cos\theta(s_0)+ (\zeta r+\check\kappa (s-s_0) )\sin\theta(s_0)\,.
\end{align*} 
 Here $\theta(s_0)$ is the angle introduced in   \eqref{eq:theta-r,t=0} with $x=x_0$.
\end{lemma}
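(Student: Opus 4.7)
The plan is to combine three ingredients: the preexisting gauge choice $\tilde A_3 \equiv 0$ from \eqref{eq:A3=0}; an additional gauge by a polynomial $\check p = \check p(r, s)$ \emph{independent of $t$} (so that $\tilde A_3 = 0$ is preserved) of degree at most $3$ in $(r, s-s_0)$; and the closedness identity $d\tilde{\Ab} = $ (two-form representation of $\tilde\Bb$) applied at $y^0$ and to its first derivatives in order to match coefficients against $\Ab^{00}$.

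First, write $\tilde{\Ab}^{(2)} = (P_1, P_2, 0)$, where $P_j$ is a polynomial of degree $\leq 2$ in the shifted coordinates $(r, s-s_0, t)$. The gauge $\nabla\check p = (\partial_r\check p, \partial_s\check p, 0)$ has components of degree $\leq 2$ in $(r, s-s_0)$ with vanishing two-dimensional curl, so the only obstruction to absorbing all $t$-independent parts of $(P_1, P_2)$ into $\nabla\check p$ is the polynomial $R(r,s) := (\partial_r P_2 - \partial_s P_1)|_{t=0}$. By closedness this equals the Taylor expansion (to order $\leq 1$) of $\tilde{\Bb}\cdot(-\Nb)$ restricted to $\{t=0\}$. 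Since $\Bb\cdot\Nb$ vanishes identically on $\Gamma = \{r = t = 0\}$, the $s$-derivative along $\Gamma$ is zero and only the $r$-derivative at $y^0$ contributes, which by \eqref{eq:kappa-n-B} equals $\pm\kappa_{n,\Bb}(x_0)$. A suitable choice of the cubic part of $\check p$ therefore reduces all $t$-independent terms of $(P_1, P_2)$ to the single contribution $\tfrac12 \kappa_{n,\Bb}(x_0)\, r^2$ in the second component, with a residual polynomial that is $\mathcal O(r^3 + |s-s_0|^3)$.

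Next, for the $t$-linear coefficients of $(P_1, P_2)$, which are not modified by $\check p$, I would use that $d\tilde{\Ab}^{(2)}$ agrees with the two-form representation of $\tilde\Bb$ up to cubic corrections. Using $g_{ij}(y^0) = \delta_{ij}$ together with $\partial_r|_{y^0} = \Vb$, $\partial_s|_{y^0} = \Tb$, $\partial_t|_{y^0} = \Nb$, this yields $\partial_t P_1(y^0) = \Bb(x_0)\cdot\Tb = \sin\theta(s_0)$ and $-\partial_t P_2(y^0) = \Bb(x_0)\cdot\Vb = \cos\theta(s_0)$ via \eqref{eq:B-r,t=0}. The linear-in-$(r, s-s_0)$ corrections inside $a_1, a_2$ are fixed by the first derivatives of the tangent-plane field components at $y^0$; the constraint $|\Bb| \equiv 1$ forces $\Bb$ to vary only by rotation, so at linear order $(a_1, a_2)$ is a rotation of $(\sin\theta(s_0), -\cos\theta(s_0))$ by some angle $\phi = \zeta r + \check\kappa(s-s_0)$, giving the factored form of the statement. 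The uniform bounds $|\zeta|, |\check\kappa| \leq \widehat C$ follow from smoothness of $\Bb$ and the frame along the compact curve $\Gamma$. Finally, the additional term $r\kappa_g(x_0)\cos\theta(s_0)$ in $a_2$ is a metric correction arising from $\alpha(r,s) = 1 - 2\kappa_g r + \mathcal O(r^2)$ in \eqref{eq:alpha(r,s)*} when passing between the vector-field and two-form representations of $\tilde\Bb$.

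The main obstacle I expect is precisely this bookkeeping between the two-form and vector-field interpretations of $\tilde\Bb$: at $y^0$ the metric is Euclidean, but its first derivatives are not, and correctly attributing the $r\kappa_g \cos\theta$ contribution to the metric rather than to a genuine field derivative takes care, as does extracting the exact factored form of $(a_1, a_2)$ from the $|\Bb| \equiv 1$ constraint. Once this identification is in place, the cubic bound $|\tilde{\Ab}^{(2)} - \Ab^{00} + \nabla\check p| \leq C(r^3 + t^2 + |s-s_0|^3)$ is the expected Taylor remainder: the $r^3, |s-s_0|^3$ pieces are the residue of the degree-$2$ expansion of $\tilde\Ab$, while the $t^2$ piece comes from the $t^2$-contribution of $\tilde{\Ab}^{(2)}$, which has no analogue in $\Ab^{00}$ and therefore must be counted as error.
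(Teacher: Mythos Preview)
Your proposal is correct and uses the same ingredients as the paper's proof: the Taylor expansion near $y^0$, the constraint $|\Bb|\equiv 1$ (together with the metric factor $\alpha(r,s)=1-2\kappa_g r+\mathcal O(r^2)$) to force the factored form of $(a_1,a_2)$, and a gauge transformation to absorb the $t$-independent part of the potential. The organization is mildly different: the paper works \emph{forward} from the field---it Taylor-expands the vector-field components $\tilde b_j$, imposes $\tilde b_1^2+\alpha\,\tilde b_2^2+\tilde b_3^2=1$ on $\{t=0\}$ to obtain the two linear relations defining $\check\kappa,\zeta$ explicitly, multiplies by $|g|^{1/2}$ to get the 2-form $\widetilde\Bb^0$, writes down an explicit antiderivative $\tilde\Ab^{00}$, and only then invokes simple-connectedness to produce $\check p$---whereas you work \emph{backward} from the potential $\tilde\Ab^{(2)}$, gauging away its $t$-independent part first and then reading off the field components from closedness. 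Your ``rotation'' interpretation of the $|\Bb|=1$ constraint is a clean way to see the factored form of $(a_1,a_2)$, but as you anticipate, the extra $r\kappa_g\cos\theta$ in $a_2$ and the precise bookkeeping require exactly the $|g|^{1/2}\sim 1-\kappa_g r$ correction that the paper carries out algebraically; once you unwind that, the two arguments coincide.
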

This lemma is an extension of Lemma 9.1 in \cite{HelMo4} to the case when the magnetic field is not necessarily constant. In the constant magnetic field case we have $\zeta=0$ and
$\check \kappa =\kappa_g(x_0)$, where $\kappa_g$ is  the  geodesic curvature introduced in \eqref{eq:kg-kn}.
Note that we do not try at the moment to explicitly compute $\check \kappa$ and $\zeta$  in
the general case. We plan indeed to show that the   result on  the lowest eigenvalue is
independent
 of $\check \kappa$ and $\zeta$.
 
\begin{proof}[Proof of Lemma~\ref{lem:normal-form}]
Our goal is to determine the Taylor expansion up to
 order~$1$ of the magnetic field vector and corresponding magnetic
 field $2$-form in the variables $(r,s,t)$, the Taylor expansion being
 computed at $t=r=0$ and $s=s_0$. Up  to a  translation, we assume that  $s_0=0$.\\
Writing the magnetic vector field  in \eqref{eq:A} as
\begin{equation}
\Bb = \widetilde b_1 \pa_r + \widetilde b_2 \pa_s + \widetilde b_3
\pa_t\;,
\end{equation}
the Taylor expansion of order $1$ at $(0,0,0)$ takes the form
\begin{equation}
\begin{array}{ll}
\widetilde b_1(r,s,t) &=  \cos \theta + \gamma_1 r + \delta_1 s +
\sigma_1 t+\mathcal O(r^2+s^2+t^2)\;,\\
\widetilde b_2(r,s,t) & =\sin \theta + \gamma_2r + \delta_2  s + \sigma_2 t+\mathcal O(r^2+s^2+t^2)\;,\\
\widetilde b_3 (r,s,t)& = \gamma_3 r +  \sigma_3 t+\mathcal O(r^2+s^2+t^2)\;.
\end{array}
\end{equation}
 where $\theta=\theta(s_0)$ and where we used \eqref{eq:kappa-n-B}-\eqref{eq:B-r,t=0}.
Here we have used that by definition of the coordinate $r$, the
function $(r,s)\mapsto \widetilde b_3(r,s,0)$ vanishes exactly at
order $1$ on $r=0$. Note that $\gamma_3$ is $\kappa_{n,\Bb}(x_0)$, introduced in  \eqref{eq:k-n,B}.\\

We now express that on $t=0$ the norm of $\Bb$ should be one.  In fact 
\begin{equation}\label{eq:normB}
  |\Bb|^2=\sum\limits_{1\leq i,j\leq 1}g_{ij}\tilde b_i\tilde  b_j+\tilde b_3^2
\end{equation}
where the coefficients $g_{ij}$ can be computed by  \eqref{eq:g0-y},  \eqref{eq:g-ij} and \eqref{eq:alpha(r,s)}.\\
 For $t=0$, this reads
\begin{equation}
(\widetilde b_1 (r,s,0)^2 + \alpha (r,s) (\widetilde b_2(r,s,0))^2
 + (\widetilde b_3(r,s,0))^2 =1\;,
\end{equation}
where $\alpha(r,s)$  is introduced in \eqref{eq:alpha(r,s)} and satisfies \eqref{eq:alpha(r,s)*}. 
We expand the last formula around $t=r=s=0$.   This leads, by taking
$t=0$
 and considering the coefficients of $r$ and $s$,  to the two
 identities
\begin{equation*}
\gamma_1 \cos \theta + \gamma_2 \sin \theta -   \kappa_g(x_0) \sin^2
\theta =0\;,
\end{equation*}
and
\begin{equation*}
\delta_1 \cos \theta + \delta_2 \sin \theta =0\;.
\end{equation*}
So it is natural to introduce  the new parameters $\hat\kappa$   and $\zeta$  as follows
\begin{equation}\label{eq:delta1,2}
\check \kappa = - \delta_1 \sin \theta + \delta_2 \cos \theta,\quad 
\zeta=- \gamma_1 \sin \theta + (\gamma_2-\kappa_g(x_0)\sin\theta) \cos \theta\,.
\end{equation}
 So we observe that
\[ \delta_1 = - \check \kappa\sin \theta  \;,
\delta_2 = \check \kappa\cos \theta\,,\]
and
\[
\gamma_1 =-\zeta\sin \theta \;,\; \gamma_2 =\zeta \cos \theta+ 
\kappa_g(x_0)\, \sin \theta\;.
\]
Hence our ``normal'' form becomes $$ \widetilde b_j(r,s,t)=\widetilde b_j^0 (r,s,t)+
\Og (r^2+s^2 +t^2)$$
with 
\begin{equation}
\begin{array}{ll}
\widetilde b_1^0(r,s,t) &=  \cos \theta - (\zeta \, r +\check \kappa  s)\sin \theta   +
\sigma_1 t\;,\\
\widetilde b_2 (r,s,t)&  = \sin \theta +( \zeta \, r  +  \check \kappa
\,  s  ) \cos \theta  + 
\kappa_g(x_0) r\sin \theta + \sigma_2 t\;,\\
\widetilde b_3(r,s,t) & \equiv \gamma_3 \,r +  \sigma_3 t\;,
\end{array}
\end{equation}
with
\begin{equation}\label{mod1e}
\gamma_3=  {\kappa_{n,{\bf B}} (x_0)} = \pa_r \langle \Bb\,|\, N \rangle  \;.
\end{equation}
Now consider $\widetilde\Bb=\curl_{(r,s,t)}\tilde\Ab$. We have $\widetilde\Bb=|g|^{1/2}(\tilde b_1,\tilde b_2,\tilde b_3)$ (see \cite[Eq.~(5.13)]{HelMo4}), where $g$ is  introduced in \eqref{eq:g-ij}.  So we obtain by  \eqref{eq:det-g},
$$ \widetilde\Bb_{ij}(r,s,t)=\widetilde \Bb_{ij}^0(r,s,t) +
\Og (r^2+s^2 +t^2)$$
with 
\begin{equation}
\begin{array}{ll}
\widetilde\Bb_{23}^0(r,s,t) &=  (1-\kappa_g(x_0) r)\cos \theta   - (\zeta \, r +
\check \kappa   s)\sin \theta  +
\sigma_1 t\;,\\ 
\widetilde\Bb_{31}^0 (r,s,t)& =     \sin \theta + ( \zeta \, r+    \check \kappa
\,  s )\cos \theta + \sigma_2 t\;,\\
\widetilde\Bb_{12}^0(r,s,t) & \equiv \gamma_3 \,r +  \sigma_3 t\;.
\end{array}
\end{equation}
Notice that the condition ${\rm div}_{(r,s,t)}\widetilde\Bb=0$ reads (at $r=t=0$ and $s=0$) as follows
\[\sigma_3=\big(\kappa_g(x_0) -\check\kappa\big)\cos\theta +\zeta\sin\theta\,.\]
We have now to choose a suitable corresponding magnetic potential  to  $\widetilde\Bb^0$.
We find 
\begin{equation}\label{mod1b} 
\tilde{\Ab}^{00}(r,s,t)=\left(\begin{array}{c}  
\tilde A_1^{00}\\
 \tilde A_2^{00}\\
\tilde A_3^{00}\end{array}\right)
=
\left(\begin{array}{c}
t a_1(r,s) +\frac{\sigma_2}2t^2 \medskip\\
 ta_2(r,s)+ \frac{1}{2} \gamma_3  r^2 -\frac{\sigma_1}2t^2\medskip\\
 0\end{array}\right)=\Ab^{00}(r,s,t) +\mathcal O(t^2)\,,
\end{equation}
with 
\begin{equation}\label{mod1c}
a_1(r,s) = \sin \theta +  (\zeta 
r + \check \kappa s)\cos \theta\;,
\end{equation}
\begin{equation}\label{mod1d}
a_2(r,s) = -(1- \kappa_g(x_0)   r )\cos \theta +   (\zeta  r +\check \kappa s) \sin
\theta\;.
\end{equation}
 Moreover ${\rm  curl\,}\tilde\Ab^{(2)}=\widetilde\Bb^0$ in the  simply  connected domain $\tilde Q_h(x_0,R_0,\delta,\epsilon_2)$, so we can find a function $\check p$ such  that $\tilde\Ab^{(2)}=\tilde\Ab^{00}-\nabla \check p$.

 Finally, $\gamma_j(s):=\frac{\partial\tilde  b_j}{\partial r}(0,s,0)$ and $\delta_j(s):=\frac{\partial\tilde b_j}{\partial s}(0,s,0)$ are   bounded  functions. Setting $M_j=\sup \big(|\gamma_j(s)|+|\delta_j(s)|\big)$ and $M=\max(M_1,M_2)$, we get  from 
\eqref{eq:delta1,2}   that
$$|\check\kappa|  \leq 2M \mbox{ and } |\zeta|\leq 2M+\|\kappa_g\|_\infty\,.
$$
\end{proof}

\subsection{A second comparison estimate}

We use the magnetic potential in Lemma~\ref{lem:normal-form} to approximate the quadratic form, as we did in Lemma~\ref{lem:10.1}. In particular, we approximate the metric by a flat one.  
Let us introduce the quadratic form corresponding to the magnetic potential in Lemma~\ref{lem:10.1} (see \cite[Lem.~10.2]{HelMo4}):
\begin{multline}\label{eq:norm-form}
q_{\Ab^{00}}^h(v)=
\int_{\tilde Q_h(x_0,R_0)}\Big( |hD_tv|^2+(1+2r\kappa_g(x_0))|(hD_s-A^{00}_2)v|^2\\
+|(hD_r-A_1^{00})v|^2\Big)drdsdt,
\end{multline}
where $v\in H^1(\tilde Q_h(x_0,R_0))$ and $\tilde Q_h(x_0,R_0)=\tilde Q_h(x_0,R_0,\delta,\epsilon_2)$ is the set introduced in \eqref{eq:tilde-Q}.

 We can  obtain a further approximation of the quadratic form for functions obeying the conditions in \eqref{eq:cond-u}.

\begin{lemma}[Helffer-Morame]\label{lem:10.2}\
There exist positive constants $C,h_0,\varsigma_0$ such that, for all $h\in(0,h_0]$ and $u\in H^1(\Omega)$  s.t. ${\rm supp\,}\,u\subset Q_h(x_0,R_0,\delta,\epsilon_2)$, we have
\[
 q_{\Ab^{00}}^h(\tilde u)-CM_h(u) h^{ \frac43+\varsigma_0} \leq q_\Ab^h(u)\leq q_{\Ab^{00}}^h(\tilde u)+CM_h(u) h^{\frac43+\varsigma_0}\,,
 \]
 where $M_h(u)$ is introduced in \eqref{eq:Mh(u)} and 
 \[\tilde u=(1-r\kappa_g(x_0))^{1/2}u \,e^{-i \check p/h}.\]
\end{lemma}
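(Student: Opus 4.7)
The plan is to combine Lemma~\ref{lem:10.1} with two further reductions—a gauge transformation based on Lemma~\ref{lem:normal-form} and a flattening of the Riemannian metric—controlling every error via the weighted energies \eqref{eq:cond-u}. Lemma~\ref{lem:10.1} already reduces the problem to comparing $q_{\tilde\Ab^{(2)}}^h(u)$ with $q_{\Ab^{00}}^h(\tilde u)$: the multiplicative error $\mathcal O(h^{2\delta})\,q_{\tilde\Ab^{(2)}}^h(u)$ is $\mathcal O(M_h(u) h^{1+2\delta}) = \mathcal O(M_h(u) h^{\frac{4}{3}+\varsigma})$ because $\delta > 5/18 > 1/6$ and, by \eqref{eq:cond-u}(b) combined with Lemma~\ref{lem:10.1}, $q_{\tilde\Ab^{(2)}}^h(u) = \mathcal O(M_h(u) h)$.

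For the first reduction I would perform the gauge transformation $w = u\,e^{-i\check p/h}$ (with the sign chosen so that, together with the Jacobian factor, the $\tilde u$ of the statement is obtained). By Lemma~\ref{lem:normal-form},
\[ q_{\tilde\Ab^{(2)}}^h(u) = q_{\Ab^{00}+R}^h(w), \qquad |R| \leq C(r^3 + t^2 + |s-s_0|^3), \]
so $|R|^2 \leq C(h^{6\delta}+t^4)$ on $\tilde Q_h(x_0,R_0)$. Expanding $|(hD-(\Ab^{00}+R))w|^2$ and applying Cauchy–Schwarz with parameter $h^{\varsigma_1}$ bounds the discrepancy by $h^{\varsigma_1}q_{\Ab^{00}}^h(w) + h^{-\varsigma_1}\int |R|^2 |w|^2\,dy$. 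Using \eqref{eq:cond-u}, $\int |R|^2|w|^2 \leq C M_h(u)(h^{6\delta}+h^2)$, and since $6\delta > 5/3 > 4/3$ this is $\mathcal O(M_h(u) h^{\frac{4}{3}+\varsigma})$ for a suitable $\varsigma_1 > 0$.

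For the second reduction I would substitute $\tilde u = (1-r\kappa_g(x_0))^{1/2}\,w$ to flatten the metric. From \eqref{eq:alpha(r,s)*}, \eqref{eq:det-g}, \eqref{eq:inv-g},
\[ |g|^{1/2} = (1-r\kappa_g(x_0)) + \mathcal O(r^2+t), \quad g^{11} = 1+\mathcal O(t), \quad g^{22} = 1+2r\kappa_g(x_0)+\mathcal O(r^2+t), \quad g^{12} = \mathcal O(t). \]
Because $(1-r\kappa_g(x_0))|w|^2 = |\tilde u|^2$ absorbs the leading Jacobian, the dominant contribution of $q_{\Ab^{00}}^h(w)$ in the exact metric rearranges precisely into the flat form $q_{\Ab^{00}}^h(\tilde u)$ of \eqref{eq:norm-form}. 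The bounded commutator $[hD_r,(1-r\kappa_g(x_0))^{-1/2}] = \mathcal O(h)$ produces an error $\mathcal O(M_h(u) h^2)$ after Cauchy–Schwarz with the magnetic derivative. The higher-order metric perturbations $\mathcal O(r^2+t)$ contribute $\mathcal O(h^{2\delta})\,q_{\Ab^{00}}^h(\tilde u) + C\int t\,|(hD-\Ab^{00})\tilde u|^2\,drdsdt$, bounded via \eqref{eq:cond-u} by $\mathcal O(M_h(u)(h^{1+2\delta}+h^{3/2})) = \mathcal O(M_h(u) h^{\frac{4}{3}+\varsigma_0})$.

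The principal obstacle is not conceptual—the computation mirrors that of \cite[Lem.~10.2]{HelMo4}, with Lemma~\ref{lem:normal-form} replacing its constant-field normal form—but rather uniform bookkeeping: one must pick a single $\varsigma_0 > 0$ so that the gauge residual $R$, the multiplicative commutator from $(1-r\kappa_g(x_0))^{1/2}$, and the Taylor remainders in $|g|^{1/2}$ and $g^{ij}$ are simultaneously $\mathcal O(M_h(u) h^{\frac{4}{3}+\varsigma_0})$ under the constraint $\delta \in (5/18,1/3)$.
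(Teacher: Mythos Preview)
Your approach is correct and essentially the same as the paper's. The paper proceeds more tersely: it quotes directly from \cite[Lem.~10.2]{HelMo4} the two-sided estimate
\[
\Big| q_\Ab^h(u) - q_{\Ab^{00}}^h(\tilde u)\Big| \leq C\|t^{1/2}(hD_x-\Ab)u\|^2 + C\big(q_{\Ab^{00}}^h(\tilde u)\big)^{1/2}\|(h^{3\delta}+h+h^{2\delta}t+t^2)u\| + C\|(h^{3\delta}+h+h^{2\delta}t+t^2)u\|^2,
\]
noting that its derivation in \cite{HelMo4} never uses constancy of $\curl\Ab$, and then bounds every remainder via \eqref{eq:cond-u} exactly as in the proof of Lemma~\ref{lem:10.1} (the only new ingredient being $\|t^2u\|^2\leq M_h(u)h^2$ from \eqref{eq:cond-u}(c) with $n=4$). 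You instead factor the passage through Lemma~\ref{lem:10.1} and then do the gauge step and the Jacobian absorption separately; this is the same computation unbundled.

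One small redundancy to clean up: after invoking Lemma~\ref{lem:10.1}, the form $q_{\tilde\Ab^{(2)}}^h$ already carries the \emph{approximated} metric coefficients $(1-r\kappa_g(x_0))$ and $(1+2r\kappa_g(x_0))$, not the full $|g|^{1/2}$ and $g^{ij}$. So in your second reduction there are no ``higher-order metric perturbations $\mathcal O(r^2+t)$'' left to treat---all that remains is absorbing the $(1-r\kappa_g(x_0))$ measure factor into $\tilde u$ and controlling the commutator $[hD_r,(1-r\kappa_g(x_0))^{1/2}]=\mathcal O(h)$, which you handle correctly.
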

\begin{proof}
We have the following two estimates from  \cite[Lem.~10.2]{HelMo4}   (whose proof  does not require that the magnetic field $\curl\Ab$ is constant)
\begin{multline*}
 q_\Ab^h(u)\geq q_{\Ab^{00}}^h(\tilde u)-C\|t^{1/2}(h D_x-\Ab)u\|^2 \\
 -C\big(q_{\Ab^{00}}^h(\tilde u)\big)^{1/2}\| (h^{3\delta}+h+h^{2\delta}t+  t^2)u\|\\
 -C\| (h^{3\delta}+h+h^{2\delta}t+  t^2) u\|^2\,,
 \end{multline*}
 and
 \begin{multline*}
 q_\Ab^h(u)\leq q_{\Ab^{00}}^h(\tilde u)+C\|t^{1/2}(h D_x-\Ab)u\|^2 \\
 +C\big(q_{\Ab^{00}}^h(\tilde u)\big)^{1/2}\| (h^{3\delta}+h+h^{2\delta}t+ t^2)u\|\\
 +C\| (h^{3\delta}+h+h^{2\delta}t+   t^2)u\|^2\,.
 \end{multline*}
 We can then estimate the remainder terms, using \eqref{eq:cond-u}, as we did in the proof of Lemma~\ref{lem:10.1}. 
  The only  term that was not present  satisfies
 \[ \|t^2u\|^2\leq M_h(u)h^2\,,\]
 where we used \eqref{eq:cond-u} (c) with $n=4$.
\end{proof}
\subsection{An estimate away from the  curve $\Gamma$}

Let us now look at the quadratic form, $q_\Ab^h(u)$, when $u$ is supported away from $\Gamma$. We start with  a rough lower bound.
\begin{lemma}\label{lem:10.1*f}
Given  $c>0$, $\epsilon_2\in(0,1)$ and $\rho\in(0,\frac14)$,  there exist positive constants $h_0,\tilde c $ such that, if $u\in  H^1(\Omega)$ satisfies
\[{\rm supp\,}\,u\subset\{x\in\Omega~:~{\rm dist}(x,\partial\Omega)<\epsilon_2\,,~d_\Gamma(x)\geq c\,h^\rho\},\]
where $d_\Gamma(x)={\rm dist}_{\partial\Omega}(p(x),\Gamma)$ is introduced in \eqref{eq:d-Gam}, then
\[q_\Ab^h(u)\geq (\Theta_0+ \tilde c\, h^{\rho})h\int_\Omega|u|^2dx\,. \]
\end{lemma}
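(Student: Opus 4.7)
The plan is to reduce the statement to a pointwise lower bound on the weight $W_h$ appearing in Proposition~\ref{prop:lb-qf}, which gives
\[q_\Ab^h(u)\geq \int_\Omega(hW_h(x)-Ch^{5/4})|u(x)|^2\,dx.\]
It suffices to show that $W_h(x)\geq \Theta_0+c_0 h^\rho$ uniformly on $\mathrm{supp}\,u$ for some $c_0>0$. Granted this, the conclusion follows by rewriting $Ch^{5/4}=Ch^{1/4-\rho}\cdot h^{1+\rho}$ and noting that, since $\rho<\tfrac14$, the prefactor $Ch^{1/4-\rho}$ can be made smaller than $c_0/2$ for $h$ small; absorbing the remainder into half the gain produces the claim with $\tilde c=c_0/2$. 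This is the only place where the constraint $\rho<\tfrac14$ is used.

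For the pointwise bound on $W_h$, I would split $\mathrm{supp}\,u$ according to whether $\mathrm{dist}(x,\partial\Omega)\geq 2h^{3/8}$ or not. On the ``interior'' piece one has $W_h(x)=1$, and since $\Theta_0<1$, the bound $W_h\geq \Theta_0+c_0 h^\rho$ is automatic for $h$ small. The substantive case is the boundary layer $\mathrm{dist}(x,\partial\Omega)<2h^{3/8}$, where $p(x)$ is well defined and $W_h(x)=\sigma(\nu(x))$ with $\sin\nu(x)=(\Bb\cdot\Nb)(p(x))$. By item (4) of Proposition~\ref{prop:gs-hp}, $\sigma(\nu)=\Theta_0+\sqrt{\delta_0}\,|\nu|+\mathcal O(\nu^2)$ near $0$, so it is enough to produce a lower bound $|\nu(x)|\geq c_1 h^\rho$: the quadratic correction is then $\mathcal O(h^{2\rho})$ and is dominated by the linear term.

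The required estimate on $|\nu(x)|$ is the one place Assumption~\ref{ass:C1} enters. On the compact surface $\partial\Omega$, the smooth function $\Bb\cdot\Nb$ vanishes exactly on the regular curve $\Gamma$ with non-vanishing tangential differential (i.e. $\kappa_{n,\Bb}>0$ along $\Gamma$). A standard argument (Taylor expansion transverse to $\Gamma$, combined with compactness away from $\Gamma$) then produces a constant $c_2>0$ such that
\[|(\Bb\cdot\Nb)(q)|\geq c_2\min(\mathrm{dist}_{\partial\Omega}(q,\Gamma),1)\qquad\text{for every } q\in\partial\Omega.\]
Evaluating at $q=p(x)$ with $\mathrm{dist}_{\partial\Omega}(p(x),\Gamma)=d_\Gamma(x)\geq c\,h^\rho$ and using $c\,h^\rho\leq 1$ for $h$ small, this gives $|\sin\nu(x)|\geq c\,c_2\,h^\rho$, hence $|\nu(x)|\geq c\,c_2\,h^\rho$ since $|\nu|\geq |\sin\nu|$. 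No step of the plan presents a serious obstacle; the lemma amounts to a matching exercise between the non-degeneracy assumption on $\Bb\cdot\Nb$ along $\Gamma$ and the Helffer-Morame effective potential $W_h$, with the hypothesis $\rho<\tfrac14$ chosen precisely so that the gain beats the $h^{5/4}$ remainder of Proposition~\ref{prop:lb-qf}.
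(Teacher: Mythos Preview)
Your proposal is correct and follows essentially the same route as the paper: both reduce to Proposition~\ref{prop:lb-qf}, use Assumption~\ref{ass:C1} to convert the lower bound $d_\Gamma(x)\geq ch^\rho$ into $|\nu(x)|\geq c'h^\rho$, invoke the linear expansion of $\sigma(\nu)$ near $0$ from Proposition~\ref{prop:gs-hp}, and absorb the $h^{5/4}$ remainder using $\rho<\tfrac14$. One small imprecision: when you write ``the quadratic correction is then $\mathcal O(h^{2\rho})$'', you are implicitly assuming $|\nu(x)|=\mathcal O(h^\rho)$, but you only have a \emph{lower} bound on $|\nu(x)|$; the clean fix (which the paper also leaves implicit) is to note that the expansion yields $\sigma(\nu)\geq\Theta_0+\tfrac{\sqrt{\delta_0}}{2}|\nu|$ on some fixed interval $|\nu|\leq\nu_0$, while for $|\nu|>\nu_0$ monotonicity gives $\sigma(\nu)\geq\sigma(\nu_0)>\Theta_0$, which is stronger than $\Theta_0+c_0h^\rho$ for $h$ small.
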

\begin{proof}
If we verify that, for a given constant $c>0$,  
\begin{equation}\label{eq:d-Gam>c}
d_\Gamma(x)\geq ch^\rho\implies \exists\,c'>0,~|\nu(x)|\geq c'h^\rho\,, \end{equation}
then the proof follows from Proposition~\ref{prop:lb-qf}, by using  that  $h^{5/4}=o(h^{1+\rho})$ and the lower bound from Proposition~\ref{prop:gs-hp},
\[\sigma(\nu)\geq \Theta_0+\frac{\sqrt{\delta_0}}2|\nu| \,, \]
in a neighborhood  of $0$. 

Let us denote by $m_*=\min_{x\in\Gamma}\kappa_{n,\Bb}(x)$, then $m_*>0$  by Assumption~\ref{ass:C1}, and \eqref{eq:d-Gam>c}  holds with $c'=m_*c/2$.  In fact, if $|\nu(x)|\leq c'h^\rho$, we get by  \eqref{eq:def-nu} 
\[|\Bb\cdot\Nb (p(x))|\leq c'h^\rho\,, \]
and it follows from \eqref{eq:B-r,t=0} that (recall that $d_\Gamma(x)=|r|$, see Sec.~\ref{sec:coordinates})
\[ m_* d_\Gamma(x)\leq c'h^\rho=m_*\frac{c}2h^\rho \,. \]
\end{proof}
The next proposition is an improvement of Proposition~\ref{lem:10.1*f} since it allows for the  support  of $u$  to be closer to  the curve $\Gamma$.
\begin{proposition}\label{lem:10.1*}
Given  $c>0$, $\epsilon_2\in(0,1)$ and $\delta\in[\frac14,\frac13)$,  there exist positive constants $h_0,c_*,C,\varsigma_0$ such that, if $u\in  H^1(\Omega)$ satisfies 
\begin{equation}\label{eq:cond-sup-u}
{\rm supp\,}\,u\subset\{x\in\Omega~:~{\rm dist}(x,\partial\Omega)<\epsilon_2,~d_\Gamma(x)\geq c\,h^\delta\},\end{equation}
where $d_\Gamma(x)={\rm dist}_{\partial\Omega}(p(x),\Gamma)$ is introduced in \eqref{eq:d-Gam}, then
\[q_\Ab^h(u)\geq (\Theta_0+c_*h^{\delta})h\int_\Omega|u|^2dx-CM_h(u)h^{\frac43+\varsigma_0}\,, \]
where $M_h(u)$  is introduced  in \eqref{eq:Mh(u)}.
\end{proposition}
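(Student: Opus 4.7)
The plan is to combine a boundary partition-of-unity localization at scale $h^\delta$ with the model-operator reductions of Lemmas~\ref{lem:10.1}--\ref{lem:10.2}, exploiting the fact that on $\mathrm{supp}\,u$ the magnetic field has a non-tangential component of size at least $c\,h^\delta$, and then invoking the half-space spectral bound from Proposition~\ref{prop:gs-hp}. First I would cover the boundary layer $\{\mathrm{dist}(x,\partial\Omega)<\epsilon_2\}$ with a quadratic partition of unity $\{\chi_j^2\}$ at scale $h^\delta$, each $\chi_j$ centered at a point $x_j\in\Gamma$, supplemented by a cutoff $\chi_\infty$ supported in $\{d_\Gamma(x)\geq\epsilon\}$ for a fixed small $\epsilon>0$, with $\chi_\infty^2+\sum_j\chi_j^2=1$ on $\mathrm{supp}\,u$. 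The IMS formula gives
\[
q_\Ab^h(u)=q_\Ab^h(\chi_\infty u)+\sum_j q_\Ab^h(\chi_j u)-h^2\Bigl(\||\nabla\chi_\infty|u\|^2+\sum_j\||\nabla\chi_j|u\|^2\Bigr),
\]
with IMS remainder of order $h^{2-2\delta}\|u\|^2=O(M_h(u)\,h^{4/3+\varsigma_0})$ for any $\varsigma_0\in(0,\tfrac{2}{3}-2\delta)$.

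For the far piece $\chi_\infty u$, Lemma~\ref{lem:10.1*f} applied with some $\rho\in(0,\tfrac14)$ immediately yields a bound $(\Theta_0+\tilde c\,h^\rho)h\|\chi_\infty u\|^2\geq (\Theta_0+c_*h^\delta)h\|\chi_\infty u\|^2$ with plenty of room. By the support hypothesis, only cells with $d_\Gamma(x_j)\geq (c/2)h^\delta$ contribute, and in the adapted coordinates around each such $x_j\in\Gamma$ the localized function $\chi_j u$ lives in $\{|r-r_j|\leq R_0h^\delta,\ |r|\geq ch^\delta/2\}$. Lemmas~\ref{lem:10.1} and~\ref{lem:10.2} then reduce each contribution to the normal-form quadratic form:
\[
q_\Ab^h(\chi_j u)\geq q_{\Ab^{00}}^h\bigl(\widetilde{\chi_j u}\bigr)-C\,M_h(u)\,h^{4/3+\varsigma_0}.
\]
The core remaining task is therefore a refined lower bound $q_{\Ab^{00}}^h(v)\geq h(\Theta_0+c_*h^\delta)\|v\|^2-C\,M_h(v)\,h^{4/3+\varsigma_0}$ valid for any $v$ supported in $\{|r|\geq ch^\delta\}$.

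To prove this estimate I would use that $\curl\Ab^{00}\cdot\Nb=\kappa_{n,\Bb}(x_j)\,r+O(t+r^2)$, so the angle $\nu$ that the effective magnetic field of the model makes with $\partial\Omega$ satisfies $|\sin\nu|\geq c'h^\delta$ on $\mathrm{supp}\,v$. Sub-partition the $r$-variable at scale $h^{\delta'}$ with $\delta<\delta'<\tfrac13$, freeze $r=r_k$ in the coefficients of $\Ab^{00}$ in each sub-cell, and perform a partial Fourier transform in $s$ to reduce to a de~Gennes-type operator with parameter tied to $\sin\nu$. Proposition~\ref{prop:gs-hp}, via $\sigma(\nu)\geq\Theta_0+\sqrt{\delta_0}|\nu|/2$ near $\nu=0$, then yields the threshold $\Theta_0+c_*h^\delta$; the freezing error in the potential is $O(h^{2\delta'})$ and the sub-cell IMS cost is $O(h^{2-2\delta'})$, both subdominant for an appropriate $\delta'$. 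Summing over $j$ using $\chi_\infty^2+\sum_j\chi_j^2=1$ and collecting all errors delivers the claimed inequality.

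The main obstacle will be precisely this refined lower bound for the normal-form model: Proposition~\ref{lemestmodzone2bis} alone is insufficient, since it only provides an $h^{4/3}c^{\mathrm{conj}}$ gain over $h\Theta_0$, whereas we need $h^{1+\delta}$ with $\delta<\tfrac13$ (so $h^{1+\delta}\gg h^{4/3}$). The improvement must therefore come from the spatial cutoff $|r|\geq ch^\delta$ through the freezing argument above, and the scale $\delta'$ has to be balanced carefully against both the freezing error and the IMS cost. The extra $\zeta$ and $\check\kappa$ terms in $\Ab^{00}$ produced by the non-constant magnetic field (absent in \cite{HelMo4}) are first order in $r$ and $s-s_0$ and contribute at the same order as the freezing error, so they can be absorbed using the same perturbative estimates employed in the derivation of Proposition~\ref{lemestmodzone2bis}.
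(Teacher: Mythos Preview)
Your partition-of-unity setup is inconsistent: you say each $\chi_j$ is centered at $x_j\in\Gamma$, yet later require ``only cells with $d_\Gamma(x_j)\geq (c/2)h^\delta$ contribute'', which forces $x_j\notin\Gamma$. More seriously, cells of size $R_0h^\delta$ centered on $\Gamma$ only cover $\{d_\Gamma\leq R_0h^\delta\}$, leaving the whole annulus $R_0h^\delta\leq d_\Gamma\leq\epsilon$ uncovered by either the $\chi_j$ or $\chi_\infty$. And even if you repair the covering by placing cell centers off $\Gamma$, Lemmas~\ref{lem:10.1}--\ref{lem:10.2} do not apply: they are stated for boxes $Q_h(x_0,R_0)$ with $x_0\in\Gamma$ (so $r_0=0$) and produce the normal form $\Ab^{00}$, which is a Taylor expansion \emph{at a point of $\Gamma$}. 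For a cell whose center sits at distance $|r_0|\gg h^\delta$ from $\Gamma$, that expansion and the error bounds behind those lemmas are not valid.

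The paper proceeds quite differently. It first splits at an intermediate scale $h^\rho$ with $\rho\in(0,\tfrac14)$: on $\{d_\Gamma\geq \tfrac{c}{2}h^\rho\}$ Lemma~\ref{lem:10.1*f} already gives $(\Theta_0+\tilde c\,h^\rho)h$, which dominates $(\Theta_0+c_*h^\delta)h$. On $\{d_\Gamma\leq ch^\rho\}$ one partitions into cells of size $h^\delta$ centered at points $x_0^*\in\partial\Omega\setminus\Gamma$ with $ch^\delta\leq|r_0|\leq 2ch^\rho$, introduces \emph{new} adapted coordinates making the boundary metric diagonal at $x_0^*$ (not at $\Gamma$), Taylor-expands the potential at $x_0^*$, and performs a rotation in the tangential variables plus a gauge transformation. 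This yields a polynomial model potential $\tilde\Ab^{(2,0)}$ whose magnetic field has normal component $\tilde B_{12}^{(0)}$ with $|\tilde B_{12}^{(0)}|\geq C^{-1}|r_0|$ and $|\tilde B_{23}^{(0)}|^2+|\tilde B_{12}^{(0)}|^2=1$; one then invokes \cite[Lemma~16.1]{HelMo4} directly to obtain $q_{\tilde\Ab^{(2,0)}}^{h,x_0^*}(u)\geq(\Theta_0+c_1|r_0|)h\|u\|^2$. The $h^\delta$ gain comes from $|r_0|\geq ch^\delta$, not from any refined analysis of the $\Gamma$-centered model $\Ab^{00}$.

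Your proposed alternative route (sub-partition in $r$, freeze coefficients, Fourier in $s$) also runs into trouble: after freezing $r=r_k$ in $\Ab^{00}$, the coefficients still depend on $s$ through the $\check\kappa(s-s_0)$ terms in $a_1,a_2$, so a partial Fourier transform in $s$ is not available. Absorbing these terms perturbatively as in Proposition~\ref{lemestmodzone2bis} is conceivable, but it would require repartitioning in $s$ as well and tracking errors of size $h^{2\delta-\tau}\|tu\|^2$; this is not the route the paper takes and is considerably more delicate than what you sketch.
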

\begin{proof}~\\
{\bf Step~1.} Let us  fix constants $c,R_0>0$, $\epsilon_2\in(0,1)$, $\delta\in[\frac14,\frac13)$  and  $\rho\in(0,\frac14)$. We assume that ${\rm supp\,}\,u\subset Q_h(x_0^*,R_0,\delta,\epsilon_2)$ where $x_0^*\in\partial\Omega $ with  boundary coordinates $(r_0,s_0,t_0=0)$ satisfies (for $h$ small enough)
$ c\,h^\delta\leq  |r_0|=d_\Gamma(x_0^*)\leq 2c\,h^\rho$  
and $Q_h(x_0^*,R_0,\delta,\epsilon_2)$ is  introduced in  \eqref{eq:Q(x0)}. 

 We denote by $\tilde Q_h(x_0^*)=\tilde Q_h(x_0^*,R_0,\delta,\epsilon_2)$ the neighborhood associated with $Q_h(x_0^*,R_0,\delta,\epsilon_2)$ by  \eqref{eq:tilde-Q}. By a translation, we may  assume that $s_0=0$.

Consider the magnetic  potential  $\tilde\Ab^{(2)}$  introduced in \eqref{eq:tilde-A-2}.  We modify the coordinates $(r,s,t)$  so that, locally  near $(r_0,0,0)$, the metric $G$ in \eqref{eq:alpha(r,s)} is diagonal\footnote{We consider the  curve $\Gamma_h$ defined by $s\mapsto \Phi_{x_0}^{-1}(r_0,s,0)$, where $x_0=\gamma(x_0^*)$ and $\Phi_{x_0}$ is  the coordinate  transformation  introduced in \eqref{eq:Phi-x0}.   We   parameterization $\Gamma_h$ by  arc-length $s\mapsto \gamma_h(s)$ and define the adapted  coordinates by considering the normal  geodesic to $\Gamma_h$ passing through $x_0^*$.} with
\begin{equation}\label{eq:alpha(r0,s)=1}
 \alpha(r_0,s)=1\quad{\rm and}\quad \frac{\partial\alpha}{\partial r}(r_0,s)=-2\kappa_g(\gamma(s))+ \mathcal O(h^\rho)\,.
 \end{equation}
By   Taylor's formula
\[\alpha(r,s)=1 -2\kappa_g(\gamma(s))(r-r_0)  +\mathcal O(h^\rho(r-r_0))+\mathcal O ((r-r_0)^2)\,.\]
In $\tilde Q_h(x_0^*)$,  we write  
\[|\kappa_g(\gamma(s))-\kappa_g(x_0^*)|\leq Ch^\delta,   \]
\[ \begin{aligned}
\alpha(r,s)=1 -2\kappa_g(x_0^*)(r-r_0)-Ch^{\delta+\rho}
 \end{aligned}\,, \]
and
\[hD_y-\tilde\Ab=(hD_y-\tilde\Ab^{(2)})-(\tilde\Ab-\tilde\Ab^{(2)})\,. \] 
So we get, as in  Lemma~\ref{lem:10.1}, the  existence  of $C',\varsigma_0>0$  such that
\[q_\Ab^h(u)\geq (1-Ch^{\delta+\rho})q_{\tilde\Ab^{(2)}}^{h,x_0^*}(u)-C'M_h(u)h^{\frac43+\varsigma_0}\,, \]
where
\begin{multline*}
q_{\tilde\Ab^{(2)}}^{h,x_0^*}(u) = \int_{\tilde Q_h(x_0^*)}(1-(r-r_0)\kappa_g(x_0^*))\Big(|(hD_t-\tilde A_3^{(2)})u|^2\\
+(1+2(r-r_0)\kappa_g(x_0^*))|(hD_s-\tilde A_2^{(2)})u|^2+|(hD_r-\tilde A_1^{(2)})u|^2\Big)drdsdt\,.
\end{multline*} 
  Performing a  change of variables 
\[(r,s)\mapsto  \big((r-r_0)\cos\omega -s\sin\omega,(r-r_0)\sin\omega+s\cos\omega\big)  \] 
which amounts to a rotation in the $(r,s)$-plane  (centered at  $(r_0,0)$), we may  assume that the  second  component of $\widetilde\Bb={\rm  curl}_{(r,s,t)}\tilde\Ab=(\tilde B_{23},\tilde B_{31},\tilde B_{12} )$  vanishes  at $(r_0,0,0)$, by choosing $\omega$ so that 
\[\tilde B_{31}(x_0^*)\cos\omega+ \tilde B_{23}(x_0^*)\sin\omega=0\,.
\]
At the  same  time, this rotation  leaves $|\Bb|$ and the measure $drds$ invariant.  
Then  performing  a gauge transformation (see \cite[Sec.~16.3]{HelMo4}), we may  assume that
\[ \tilde\Ab^{(2)}(r,s,t)=\tilde\Ab^{(2,0)}(r,s,t)+\mathcal O(|r-r_0|t+|s|t+t^2) \]
where  
\[\tilde\Ab^{(2,0)}(r,s,t):=\left(\begin{array}{l}
\tilde c_1^0\, s^2\\
\tilde B_{23}^{(0)}t+\tilde B_{12}^{(0)}(r-r_0)+\tilde c_2^0\, (r-r_0)^2\\
0
\end{array} \right) \,.\]
Here $$ \widetilde\Bb^{(0)}:=\widetilde\Bb(r_0,0,0)=(\tilde B_{23}^{(0)},\tilde B_{31}^{(0)}=0,\tilde B_{12}^{(0)} )$$ and $\tilde  c_1^0,\tilde  c_2^0$ are constants. \\
 Similarly to  the proof of Lemma~\ref{lem:10.1},  by  writing
\[hD_y-\tilde\Ab^{(2)}=hD_y-\tilde\Ab^{(2,0)}-(\tilde\Ab^{(2)}-\tilde\Ab^{(2,0)} )\]
and
\[\|(hD_y-\tilde\Ab^{(2,0)})u\|\leq \|(hD_y-\tilde\Ab)u\|+\|(\tilde\Ab-\tilde\Ab^{(2)})u\|+\|(\tilde\Ab^{(2)}-\tilde\Ab^{(2,0)})u\|\,,\] 
we get
\[ q_{\tilde\Ab^{(2)}}^{h,x_0^*}(u) \geq (1-Ch^{2\delta})q_{\tilde\Ab^{(2,0)}}^{h,x_0^*}(u)-C''M_h(u)h^{\frac43+\varsigma_0}\,.\]
Thus we are left with finding a lower bound of $q_{\tilde\Ab^{(2,0)}}^{h,x_0^*}(u)$. \\
 Note  that,
since  $|\Bb|=1$  and by  \eqref{eq:alpha(r0,s)=1}, the metric satisfies $|g|=1$ on  $x_0^*$,    we have  by \eqref{eq:normB},  $|\tilde B_{23}^{(0)}|^2+|\tilde B_{12}^{(0)}|^2=1$.\\
 Moreover, since $\Bb\cdot\Nb$  vanishes linearly on $\Gamma=\{r=0\}$,  there exist $C_1>0$ and $C_2>0$ such that
\[ \frac 1{C_1} |r_0|\leq |\tilde B_{12}^{(0)}|^2\leq C_2\, |r_0|\,,\quad |\,|\tilde B_{23}^{(0)}|-1\,|\leq C_2r_0^2,\quad
|\tilde c_1^0| +|\tilde c_2^0|\leq C_2  \,. \]
The previous  estimates yield a lower bound of $q_{\tilde\Ab^{(2,0)}}^h(u)$ by comparing  with a  model operator (after rescaling the variables $\tilde r=h^{1/3}(r-r_0)$, $\tilde s=h^{1/3}s$ and  $\tilde t=h^{1/2}t$).   In fact, by \cite[Lemma~16.1]{HelMo4}, there exists $c_1>0$ such that,
\[ q_{\tilde\Ab^{(2,0)}}^{h,x_0^*}(u)\geq (\Theta_0+c_1|r_0|) h \int_{\Omega}|u|^2dx\,.\] 
Note that, we can use Lemma~16.1 of \cite{HelMo4}  under our assumptions on the support  of  $u$.\medskip\\
{\bf Step~2.} We can reduce to the setting  of Step~1 and Lemma~\ref{lem:10.1*f} by means of a partition of unity.  In fact, consider an $h$-dependent  partition of unity $\chi_1^2+\chi_2^2=1$ on  $\{{\rm dist}(x,\partial\Omega)<\epsilon_2\}$ such  that  
\[{\rm supp\,}\chi_1\subset\{d_\Gamma(x)\geq \frac{c}2h^\rho \},\quad {\rm supp\,}\chi_2\subset \{d_\Gamma(x)\leq c h^\rho \},\quad\sum_{i=1}^2|\nabla\chi_i|^2=\mathcal  O(h^{-2\rho})\,.  \]
If $u\in  H^1(\Omega)$ satisfies \eqref{eq:cond-sup-u}, then
\[ q_\Ab^h(u)=\sum_{i=1}^2\Big(q_\Ab^h(\chi_iu)-h^2\|\,|\nabla\chi_i|u\, \|^2\Big)\,,\]
where 
\begin{align*}
&q_\Ab^h(\chi_1u)\geq (\Theta_0+ \tilde c\, h^{\rho})h\int_\Omega|\chi_1u|^2dx\quad{\rm by~Proposition~\ref{lem:10.1*f}}\,,\\
&q_\Ab^h(\chi_2u)\geq (1-Ch^{\delta+\rho})(\Theta_0+  c_1 h^{\delta})h\int_\Omega|\chi_2u|^2dx-M_h(u)h^{\frac43+\varsigma_0}\quad{\rm by~Step~1}\,,\\
&\sum_{i=1}^2h^2\|\,|\nabla\chi_i|u\, \|^2=\mathcal O(h^{2-2\rho})=o(h^{1+\delta}) \,,
\end{align*}
where in  the  last step we used that  $0<\rho<\frac14$  and $\frac14<\delta<\frac13$.
\end{proof}

\section{Lower bound}\label{sec:lb}

\subsection{Another model}

The  model in \eqref{n1a} corresponds to  the quadratic form in \eqref{eq:norm-form} when $\kappa_g(x_0)=0$. However, when $\kappa_g(x_0)\not=0$, the  situation is similar to \cite[Sec.~15]{HelMo4}. The model  compatible with \eqref{eq:norm-form} can still be reduced to  the one in \eqref{n1a} with  appropriate choices  of the  parameters $\eta,\zeta,\gamma$ (see \eqref{eq:def.eta.zeta}).

\subsubsection{A new model  quadratic form}

Let us fix a boundary point $x_0\in\Gamma$ and denote the model quadratic  form  near $x_0$ by 
\begin{equation}\label{eq:qm}
u \mapsto q_m(u):=q_{\Ab^{00}}(u)
\end{equation}
where $q_{\Ab^{00}}$ is given in \eqref{eq:norm-form}, $u\in H^1(\tilde Q_h(x_0,R_0))$ and $\tilde Q_h(x_0,R_0)= \tilde Q_h(x_0,R_0,\delta,\epsilon_2)$ is the set introduced in \eqref{eq:tilde-Q}. 
Furthermore, we assume that
 that the metric is flat at $x_0$ and the coordinates of $x_0$ in the $(r,s,t)$ frame are $(0,s_0=0,0)$, after performing a translation  with respect to  the $s$ variable.

Following the proof of  \cite[Lem.~15.1]{HelMo4}, we are led  to the analysis of the 
 model quadratic form  (see Lemma~\ref{lem:qm=qm0})
\begin{equation}\label{mlem1eq1} 
q^{h}_{m,0}(u)=
\int_{\tilde Q_h(x_0,R_0)}\Big(h^2|D_tu|^2  +
|tu-L^h_1u|^2 + 
 |L^h_2u|^2 \Big)\; drdsdt \; , 
\end{equation}
where
\begin{equation}\label{mod1bb} 
\begin{aligned} 
L^h_1&=a_1\, hD_r+a^0_2\, hD_s- \frac{1}{2}\,\cos \theta\, \kappa_{n,\bf B}(x_0)\, r^2
\; , \\ 
L^h_2&= a^1_2\, hD_r+ a^1_1\, hD_s + \frac{1}{2}\,\sin \theta \,  \kappa_{n,\bf B}(x_0)\, 
r^2
\;  ,
\end{aligned} 
\end{equation} 
and,  with $\theta=\theta(s_0)$ the angle  defined by \eqref{eq:theta-r,t=0}, we introduce the following  functions 
\begin{equation}\label{eq:def.a-alpha(r)}
\begin{array}{ll}
a_1(r,s) &= \sin \theta + \cos \theta (\zeta 
r + \check \kappa s)\;,\\
a_2(r,s) & = -\cos \theta + \kappa_g(x_0)  \cos \theta r +  \sin
\theta (\zeta  r +\check \kappa s) \;,\\
a_{2}^0(r,s)& =-\cos \theta - \kappa_g(x_0)  \cos \theta r+ \sin \theta (\zeta r
+\check \kappa s)\;,\\
a_2^1(r,s) &= \cos \theta - \sin \theta (\zeta r + \check \kappa s) \;,\\
a_1^1(r,s)& = \sin \theta + \sin \theta \kappa_g(x_0)  r + \cos \theta (\zeta r
+\check \kappa s) \;,\\
\alpha (r)&= 1+ 2 \kappa_g(x_0)  r\;.
\end{array}
\end{equation}
We will consider the form $q_{m,0}$ on the following class of  functions
\begin{equation}\label{eq:domD0}
\mathcal D_0=\{u\in H^1(\Omega^h)~:~u|_{(\partial\mathcal Q^h)\times\,]0,h^\delta[}=0,~u|_{\mathcal Q^h\times\{h^\delta\}}=0 \}
\end{equation}
where
\[ \Omega_h=\mathcal Q^h\times\,]0,h^\delta[\,,\quad \mathcal Q^h=]-R_0h^\delta,R_0h^\delta[^2\,.\]
The precise relation between the model quadratic forms in \eqref{eq:qm} and \eqref{mlem1eq1} is given in the  following lemma.
\begin{lemma}\label{lem:qm=qm0}
 For any $\delta\in(\frac{5}{18},\frac13)$ and $\tau_1 > 0$, there exists $C >0$ such that, for any $u\in \mathcal D_0$ and $h\in(0,1)$,
\[(1 +Ch^{2\delta})q^h_m(u)
\geq (1-Ch^{\tau_1})q^h_{m,0}(u)-C\big(
\|(h^{2\delta}+h^{\tau_1})tu\|^2+
h^{6\delta-\tau_1}\|u\|^2\big)\,.
\]
\end{lemma}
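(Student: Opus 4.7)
I would follow the strategy of \cite[Lem.~15.1]{HelMo4}, extended to cover the case of non-constant magnetic field (where the parameters $\zeta,\check\kappa$ from Lemma~\ref{lem:normal-form} can be nonzero). Setting $P_1 := hD_r - ta_1$ and $P_2 := hD_s - ta_2 - \tfrac12\kappa_{n,\Bb}(x_0)r^2$, the definition \eqref{eq:norm-form} gives
\[
q_m^h(u) = \int_{\tilde Q_h(x_0,R_0)}\bigl(h^2|D_t u|^2 + \alpha(r)|P_2 u|^2 + |P_1 u|^2\bigr)\,drdsdt
\]
with $\alpha(r)=1+2\kappa_g(x_0) r$. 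The key idea is to introduce the ``rotated'' first-order operators $Q_1:=a_1 P_1 + a_2^0 P_2$ and $Q_2:=a_2^1 P_1 + a_1^1 P_2$, built from the coefficients in \eqref{eq:def.a-alpha(r)}.

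A direct algebraic computation based on \eqref{eq:def.a-alpha(r)} yields the pointwise identity
\[
|Q_1 u|^2 + |Q_2 u|^2 - |P_1 u|^2 - \alpha(r)|P_2 u|^2 = \xi^2|P_1 u|^2 - 2\xi\kappa_g(x_0)r\,\mathrm{Re}(P_1 u\,\overline{P_2 u}) + (\xi^2+\kappa_g(x_0)^2 r^2)|P_2 u|^2,
\]
with $\xi := \zeta r+\check\kappa s$. The right-hand side is a positive semi-definite quadratic form in $(P_1 u, P_2 u)$ (its discriminant equals $\xi^4 \geq 0$), whose coefficients are $O(h^{2\delta})$ on $\tilde Q_h(x_0,R_0)$, hence dominated pointwise by $Ch^{2\delta}(|P_1 u|^2 + \alpha(r)|P_2 u|^2)$. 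Rearranging and integrating produces the first key bound
\[
(1+Ch^{2\delta})q_m^h(u) \geq \int_{\tilde Q_h(x_0,R_0)}\bigl(h^2|D_t u|^2 + |Q_1 u|^2 + |Q_2 u|^2\bigr)\,drdsdt.
\]

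Next I would expand $Q_j$ explicitly, using the algebraic identities $a_1^2 + a_2^0 a_2 = 1 + O(h^{2\delta})$ and $a_2^1 a_1 + a_1^1 a_2 = O(h^{2\delta})$, and replacing the coefficients $a_2^0$ and $a_1^1$ by their leading values $-\cos\theta$ and $\sin\theta$ inside the $r^2$-terms. Matching with the definitions \eqref{mod1bb} of $L_j^h$ then produces pointwise identities
\[
-Q_1 u = (tu - L_1^h u) + E_1 u, \qquad Q_2 u = L_2^h u + E_2 u,
\]
with residuals $|E_j u| \leq C(h^{2\delta}t + h^{3\delta})|u|$. Applying the Cauchy--Schwarz-type inequality $|A+B|^2 \geq (1-\epsilon_1-\epsilon_2)|A|^2 - \epsilon_1^{-1}|B_1|^2 - \epsilon_2^{-1}|B_2|^2$ pointwise, with $B_1$ the $t$-dependent and $B_2$ the $t$-independent part of $E_j$ and with $\epsilon_j$ being suitable powers of $h$ tied to $\tau_1$, and integrating, produces
\[
\int(|Q_1 u|^2 + |Q_2 u|^2)\,drdsdt \geq (1-Ch^{\tau_1})\,q_{m,0}^h(u) - C\bigl\|(h^{2\delta}+h^{\tau_1})tu\bigr\|^2 - Ch^{6\delta-\tau_1}\|u\|^2 - (1-Ch^{\tau_1})\int h^2|D_t u|^2,
\]
and combining with the first key bound completes the proof.

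The principal obstacle is the algebraic cancellation required in the identification step: the $r^2$-coefficients of $Q_1$ and $Q_2$ (arising from $-\tfrac12 a_2^0\kappa_{n,\Bb}(x_0)r^2$ and $-\tfrac12 a_1^1\kappa_{n,\Bb}(x_0)r^2$) must match, at leading order, the $r^2$-coefficients of $L_1^h$ and $L_2^h$ in \eqref{mod1bb}, so that the residuals $E_j$ are genuinely of order $h^{3\delta}$ rather than $h^{2\delta}$. Any residual at order $h^{2\delta}$ would, after Cauchy--Schwarz, produce an uncontrollable error of order $h^{4\delta-\tau_1}\|u\|^2$ exceeding the allowed $h^{6\delta-\tau_1}\|u\|^2$. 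This is precisely why \eqref{mod1bb} uses the ``modified'' coefficients $a_2^0,a_1^1$ (with flipped sign on the $\kappa_g r$ term) rather than the original $a_2, a_1$ from Lemma~\ref{lem:normal-form}. Once this matching is verified by direct computation, the remaining bookkeeping (commutators, operator ordering, and the precise balance of $\tau_1$ against $2\delta$ in the Cauchy--Schwarz splitting) is routine.
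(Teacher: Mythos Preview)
Your approach is the same as the paper's (which simply cites \cite[Lem.~15.1]{HelMo4} and records the adjusted estimates \eqref{1.2a}--\eqref{1.8}), and your explicit PSD identity for $|Q_1|^2+|Q_2|^2-|P_1|^2-\alpha|P_2|^2$ is correct and clean. However, the step you single out as the ``principal obstacle'' actually fails as written. Direct computation gives
\[
-Q_1u-(tu-L_1^hu)=t\bigl(a_1^2+a_2^0a_2-1\bigr)u+\tfrac12(a_2^0-\cos\theta)\gamma r^2\,u,
\]
and $a_2^0-\cos\theta=-2\cos\theta+O(h^\delta)$, not $O(h^\delta)$; likewise $Q_2-L_2^h$ has $r^2$-coefficient $-\tfrac12(a_1^1+\sin\theta)\gamma=-\sin\theta\gamma+O(h^\delta)$. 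So both residuals are $O(h^{2\delta})$, not $O(h^{3\delta})$, and after Cauchy--Schwarz you get $h^{4\delta-\tau_1}\|u\|^2$ rather than the required $h^{6\delta-\tau_1}\|u\|^2$---precisely the failure you warned against.

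The issue is a sign inconsistency between \eqref{mod1bb} and the normal form $\Ab^{00}$ of Lemma~\ref{lem:normal-form}: since $P_2=hD_s-A_2^{00}$ carries $-\tfrac12\kappa_{n,\Bb}(x_0)r^2$, the rotation naturally produces $r^2$-coefficients $+\tfrac12\cos\theta\,\kappa_{n,\Bb}$ in $L_1^h$ and $-\tfrac12\sin\theta\,\kappa_{n,\Bb}$ in $L_2^h$, opposite to the signs printed in \eqref{mod1bb}. With those signs corrected (equivalently, with $\gamma$ taken of the opposite sign in \eqref{mod1bb}, which is harmless downstream since only $|\gamma|$ enters $c^{\rm conj}$), one has $a_2^0+\cos\theta=O(h^\delta)$ and $a_1^1-\sin\theta=O(h^\delta)$, your residual bound holds, and the rest of your argument goes through unchanged. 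You should flag this discrepancy explicitly rather than asserting the matching ``by direct computation.''
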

\begin{proof}
The proof follows that of Lemma~15.1 in \cite{HelMo4} with some adjustments  in the formulas (15.9),
(15.16)
 and (15.17) in \cite{HelMo4}.\\
We have indeed 
\begin{equation}\label{1.2a} 
|1-(a_1)^2 -\alpha (a_2)^2|\leq Ch^{2\delta}\;, 
\end{equation} 
 where we  used that  $\alpha(r)^{1/2}=1+\kappa_g(x_0)r +\mathcal O(h^{2\delta})$ on the support of $u$, which  follows by \eqref{eq:def.a-alpha(r)}.

We also  observe that~: 
\begin{equation}\label{1.7} 
 |\alpha a_2-a^0_2|+|\alpha ^{1/2}a_2+ a^1_2|+
|\alpha ^{1/2}a_1-a^1_1|\leq C(r^2+s^2) 
\end{equation} 
and 
\begin{equation}\label{1.8} 
|\alpha ^{1/2}a_1-\sin \theta |+|\alpha a_2+\cos \theta |\leq 
C(r^2+s^2)^{1/2} \;. 
\end{equation}
\end{proof}
 Later  on, we will choose $\delta$  and $\tau_1$  in a convenient way (see Remark~\ref{rem:tau1}).
\subsubsection{Linearizing change of variable}

In order to reduce to  the case $\kappa_g=0$  and eliminate the slightly variable
 coefficients of $D_r$ and $D_s$ in \eqref{eq:norm-form}, we argue as \cite[Sec.~15.2]{HelMo4a} by performing a change of variables. The argument does not work in our case in  the same way as \cite[Sec.~15.2]{HelMo4a},
but it leads to the fact that for our lower bound the only relevant
 parameters  are $\eta:=\check\kappa - \kappa_g$  and $\zeta$ (see \eqref{eq:norm-form}). 
 
  The  below computations  are essentially the same as in \cite[Sec.~15.2]{HelMo4} but we have to do them  carefully in order to capture the correct  $\eta$ and $\zeta$ appearing in \eqref{n1a}.
 
 Let us follow, what
this change of variable was doing.
 We introduce 
\begin{equation}\label{hatkappa}
\kappa :=\kappa_g(x_0) .
\end{equation}
Let us make the change of variables $(r,s)=\Phi_{\kappa} (p,q)$ with 
\begin{equation}\label{varchange} 
\begin{array}{l} 
r= \sin \theta \,  p +\cos \theta \,  q 
-\frac{\kappa}{2}[-\cos \theta \,  p +\sin \theta \,  q ]^2\; ,\\  
s=-\cos \theta  \, p + \sin \theta \,  q 
-\frac{\kappa}{2}[\sin (2\theta )\, (p^2 -q^2) + 2\cos (2\theta )\, pq] \; 
, \end{array} 
\end{equation} 
 where $\theta=\theta(s_0)$ is the angle  defined by \eqref{eq:theta-r,t=0}.

The map $\Phi_{\kappa}$ is a perturbation of a rotation 
and, by the local inversion theorem, it is easily seen as a local diffeomorphism
 sending a fixed neighborhood of $ (0,0)$ onto another
neighborhood of $(0,0)$.

Then, for $h$ small enough, $\mathcal Q^h:=]-R_0h^\delta,R_0h^\delta[^2$ is transformed   by $\Phi_{ \kappa}^{-1}$ to the set $ 
\mathcal Q^h_0$
 satisfying~:
\begin{equation}\label{varchange2} 
\mathcal  Q^h_0=\Phi_{ \kappa} ^{-1}(\mathcal Q^h) \; \subset \, \; 
]-R_0'h^{\delta }\;,\; R_0'h^{\delta }[ \; \times \; 
 ]-R_0'h^{\delta }\;,\; R_0'h^{\delta }[ \; . 
\end{equation} 
 Let us write
\begin{equation}\label{varchange3} 
D_p=c_{11}D_r+ c_{12}D_s,\ \ \ D_q =  c_{21}D_r+ c_{22}D_s \;,
\end{equation} 
 We can express the   functions $c_{ij}$ in terms of  the $(p,q)$ variables, by using  (\ref{varchange}). In fact, we introduce $c_{ij}(r,s)=\check  c_{ij}(p,q)$, and observe that
\begin{align*}
\check c_{11}(p,q) & = \frac{\pa r}{\pa p} = \sin \theta +  \kappa \cos \theta \, 
(-\cos \theta\, p + 
\sin \theta \,q)\;;\\
\check c_{12}(p,q) & = \frac {\pa s }{\pa p}=- \cos \theta -  \kappa \, (\sin( 2 
\theta)\, p + \cos 
(2\theta)\, q)\;;\\
\check c_{21}(p,q) &=\frac{\pa r}{\pa q}  = \cos \theta -  \kappa \sin \theta \, (-\cos 
\theta \,p + 
\sin \theta\,  q)\;;\\
\check c_{22}(p,q) &= \frac{\pa s}{\pa q} = \sin \theta- \kappa \, (- \sin (2\theta)\, 
q + \cos 
(2\theta)\,  p)\;.
\end{align*}
  Then we return back to  the $(r,s)$  variables, by using  (\ref{varchange}).  Noticing that, as $(p,q)\to(0,0)$,
 \begin{equation}\label{eq:p,q=r,s}
  r=\sin\theta\,p+\cos\theta\,q  +\Og (p^2+q^2)\,,\quad s=-\cos \theta\, p + 
\sin \theta \,q + \mathcal O(p^2+q^2)\,, \end{equation}
we get  
\begin{equation}\label{varchange4} 
\begin{aligned}
c_{11}(r,s) & = \sin \theta  +\kappa \cos \theta \, s + \Og(r^2 + s^2)\;;\\
c_{12} (r,s) & =  -\cos \theta  -\kappa  (\cos \theta\, r - \sin \theta \, s)
+ \Og (r^2+s^2)\; ;\\
c_{21} (r,s)  & =\cos \theta - \kappa \sin \theta\, s  + \Og(r^2 + s^2)\;;\\
c_{22} (r,s)  & = \sin \theta  + \kappa (\sin \theta \, r 
+  \cos \theta \, s)
 + \Og(r^2 + s^2)\;.
\end{aligned}
\end{equation} 
Let us now control the measure in the change of variable. 
By an easy computation, we get~:  
\[dr\, ds=\check \alpha_1dpdq\,,\quad \check\alpha_1(p,q)=1+\kappa (\sin\theta\, p+\cos\theta\, q)+\mathcal O(p^2+q^2)\,. \]
By  using \eqref{eq:p,q=r,s},  $\alpha_1(r,s)=\check\alpha_1 (p,q)$ satisfies
\begin{equation}\label{varchange6} 
 |\alpha_1-1- \kappa r|\leq 
C(r^2+s^2) 
\; ,
\end{equation}  
where $r=r(p,q)$ is defined in \eqref{varchange}.

\vskip 0.5cm 
\noindent 
Similarly
 to  Lemma~\ref{lem:qm=qm0} we get also  that one can go from the control
 of $q_{m,0}^h (u)$ to the control of the new  quadratic form\footnote{We express $L_1^h$ and $L_2^h$ (see \eqref{mod1bb}) in terms of the $(p,q)$ variables introduced in \eqref{varchange}  and neglect the terms of order $\mathcal O(r^2+s^2)=\mathcal O (p^2+q^2)$.}
\begin{equation}\label{mlem2eq2} 
q^{h}_{m,1}(u)=
\int_{\Omega ^h_0}\Big(h^2|D_tu|^2  +
|tu-M^h_1u|^2 + 
 |M^h_2u|^2 \Big)
\check\alpha_1\, dpdqdt \; , 
\end{equation} 
with
\[ \Omega ^h_0:=\mathcal Q^h_0\times ]0,h^{\delta }[\; ,\]
and 
\begin{equation}\label{mod2bb} 
\begin{aligned}
M^h_1&=hD_p + h \left((\check \kappa - \kappa)s + \zeta r\right) D_q -\frac{1}{2}\, \cos \theta \,\kappa_{n,\bf B}(x_0)
(\sin \theta\, p+\cos \theta \,  q)^2
\; , \\ 
M^h_2&=hD_q -  h\left((\check \kappa - \kappa)s + \zeta r\right) D_p +  \frac{1}{2}\, \sin \theta \kappa_{n,\bf B}(x_0)
(\sin \theta \, p\;+ \; \cos \theta  q)^2 
 \; ,
\end{aligned} 
\end{equation}  
 where $(r,s)=(\sin\theta p+\cos\theta q,-\cos\theta  p+\sin\theta q)$. 

More precisely, we have the following comparison lemma  (see Lemma~\ref{lem:qm=qm0} and \cite[Lem.~15.4]{HelMo4}).
\begin{lemma}\label{lem:qm0=qm1}
For any $\tau_1 > 0$, there exists $C >0$ such that, for any $u\in \mathcal D_0$,
\[(1 +Ch^{2\delta})q^h_{m,0}(u)
\geq (1-Ch^{\tau_1})q^h_{m,1}(\tilde u)-C\big(
\|(h^{2\delta}+h^{\tau_1})tu\|^2+
h^{6\delta-\tau_1}\|u\|^2\big)\,,
\]
where $\tilde u=u\circ\Phi_\kappa^{-1}$ is associated with $u$ by the transformation $\Phi_\kappa$.
\end{lemma}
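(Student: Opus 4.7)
The plan is to follow the same scheme used in Lemma~\ref{lem:qm=qm0}, but now applied after the change of variables $(r,s)=\Phi_{\kappa}(p,q)$ of \eqref{varchange}. First, I would perform the substitution in the integral defining $q^{h}_{m,0}(u)$: the measure becomes $\check\alpha_1\,dp\,dq\,dt$ by the Jacobian computation, the $h^2|D_t u|^2$ term is unaffected, and the operators $L_1^h, L_2^h$ are rewritten using \eqref{varchange3}--\eqref{varchange4} which express $hD_r,hD_s$ as linear combinations of $hD_p,hD_q$ with coefficients $c_{ij}=c_{ij}(r,s)$ that differ from constants by $\kappa$-linear corrections plus $\mathcal O(p^2+q^2)$ terms.

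The heart of the proof is an algebraic verification: after substituting \eqref{varchange4} into the expressions $L_1^h=a_1\,hD_r+a_2^0\,hD_s+\cdots$ and $L_2^h=a_2^1\,hD_r+a_1^1\,hD_s+\cdots$, and using the formulas \eqref{eq:def.a-alpha(r)} for $a_1,a_2^0,a_2^1,a_1^1$, the $\sin\theta/\cos\theta$ coefficients conspire so that (i) the principal parts reduce to $hD_p$ and $hD_q$ respectively; (ii) the $\kappa_g(x_0)=\kappa$ contributions from the $a_j$'s cancel exactly against the $\kappa$-corrections in the $c_{ij}$'s, leaving only the combination $\eta:=\check\kappa-\kappa$ and the coefficient $\zeta$ multiplying the cross-derivatives; and (iii) the quadratic term $-\tfrac12\cos\theta\,\kappa_{n,\Bb}(x_0)\,r^2$ becomes $-\tfrac12\cos\theta\,\kappa_{n,\Bb}(x_0)(\sin\theta\,p+\cos\theta\,q)^2$ via \eqref{varchange}, up to a cubic remainder. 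This produces the operators $M_1^h,M_2^h$ of \eqref{mod2bb}, modulo error operators of the form $E_j=\mathcal O(r^2+s^2)\cdot h\nabla+\mathcal O((r^2+s^2)^{1/2})\cdot t+\mathcal O((r^2+s^2))$, analogous to \eqref{1.7}--\eqref{1.8}.

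To pass from the pointwise identity to the quadratic form inequality, I would use the triangle inequality $\||tu-L_j^h u|\|\ge \||tu-M_j^h\tilde u|\|-\|E_j u\|$ together with $2ab\le \varepsilon a^2+\varepsilon^{-1}b^2$ applied with $\varepsilon=h^{\tau_1}$, which is exactly the mechanism that produces the $(1-Ch^{\tau_1})$ prefactor on $q^h_{m,1}$ and isolates the error term $Ch^{-\tau_1}\|E_j u\|^2$. On the support of $u$ one has $|r|+|s|=\mathcal O(h^\delta)$, so $\|(r^2+s^2)\,h\nabla u\|^2=\mathcal O(h^{4\delta})\|h\nabla u\|^2$ can be absorbed into $Ch^{2\delta}q^h_{m,0}(u)$ (giving the $(1+Ch^{2\delta})$ factor on the left), $\|(r^2+s^2)^{1/2} tu\|^2=\mathcal O(h^{2\delta})\|tu\|^2$ contributes to the $\|(h^{2\delta}+h^{\tau_1})tu\|^2$ term, and $\|(r^2+s^2)u\|^2=\mathcal O(h^{4\delta})\|u\|^2$ is dominated by $h^{6\delta-\tau_1}\|u\|^2$ since $\tau_1$ will eventually be chosen small. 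Finally, the factor $\check\alpha_1=1+\mathcal O(h^\delta)$ differs from $1$ by a term that is harmlessly absorbed into the $(1+Ch^{2\delta})$ prefactor via \eqref{varchange6}.

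The main obstacle will be the bookkeeping in step~2 above: keeping track of which $\kappa$-terms cancel and which combine into $\eta$ and $\zeta$, and verifying that the terms quadratic in $(p,q)$ match the stated expressions in $M_1^h,M_2^h$. This is a purely algebraic computation, but it is the reason the effective model depends only on $\eta$ and $\zeta$ and not separately on $\check\kappa$ and $\kappa_g(x_0)$, which is precisely the conclusion that later allows the lower bound to be independent of $\check\kappa$ as announced after Lemma~\ref{lem:normal-form}.
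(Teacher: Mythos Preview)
Your proposal is correct and follows essentially the same approach as the paper, which merely refers the reader to Lemma~\ref{lem:qm=qm0} and \cite[Lem.~15.4]{HelMo4}. Your expansion of the argument---the change of variables, the algebraic identification $a_1 D_r+a_2^0 D_s=D_p+(\zeta r+(\check\kappa-\kappa)s)D_q+\mathcal O(r^2+s^2)$ (and similarly for $L_2^h$), and the $\varepsilon=h^{\tau_1}$ absorption mechanism---is exactly what is intended; note only that \eqref{varchange3}--\eqref{varchange4} give $D_p,D_q$ in terms of $D_r,D_s$ (not the reverse), so the cleanest route is to compare the coefficients $a_1,a_2^0,a_2^1,a_1^1$ directly with $c_{11},c_{12},c_{21},c_{22}$, and that the measure $\check\alpha_1\,dp\,dq\,dt$ is already built into the definition of $q^h_{m,1}$ so no separate absorption of the Jacobian is needed.
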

By a unitary transformation, and after control of a commutator,  we
 can reduce  to a  flat measure  ($dpdq$ instead of $\check\alpha_1dpdq$) and  obtain  the  new quadratic form defined as follows
\begin{equation} \label{eq:qm2}
q^{h}_{m,2}(v)= 
\int_{\Omega ^h_0}[h^2|D_tv|^2  +
|tv-M^h_1v|^2 + 
 |M^h_2v|^2 ]\; dpdqdt\;,
\end{equation} 
 with $v$ associated to $u$ by $v=\check\alpha_{1}^{1/2}\tilde u$. In  fact, we have \cite[Eq.~(15.29)]{HelMo4}
 \begin{equation}\label{eq:qm1=qm2}
(1+Ch^{1/2}) q_{m,1}(\tilde u)+Ch^{3/2}\|u\|^2\geq  q^h_{m,2}(v)\,.
 \end{equation}

Let us consider the new model   associated with   the quadratic form in \eqref{eq:qm2}. We first observe that the result
 depends only on
  $\check\kappa -\kappa $ and on $\zeta$. The proof
 is moreover uniform with respect to these parameters.
As a consequence, if $\Phi= \Phi_{\kappa}$ was the transformation  introduced in \eqref{hatkappa}, the inverse (for $\kappa=0$)
 $\Phi_{0}^{-1}$, more explicitly the transformation
 $(p,q) \mapsto (\tilde r = \sin \theta p + \cos \theta q  \;,\; \tilde s = 
-\cos \theta p + 
\sin \theta q)$ will bring 
 us (in the new variables $(\tilde r, \tilde s , t)$) to the initial model with 
$ \kappa_g $ 
replaced by $0$, and $\check\kappa$ replaced by $\check \kappa - \kappa_g(x_0) $.
This can also be done by explicit computations.\\

Doing the transformations backwards, we are led
 to a magnetic Laplacian computed with a trivial metric
$\kappa_g=0$   but with a new magnetic potential 
\begin{equation}
a_1(r,s)^{\rm new} = \sin \theta +\cos \theta (\zeta r +
 (\check \kappa - \kappa_g(x_0) ) s)
\;,
\end{equation}
and
\begin{equation}
a_2(r,s)^{\rm new} = -\cos \theta + \sin \theta (\zeta r +
 (\check \kappa - \kappa_g(x_0) ) s)\;.
\end{equation}

So the new model is not as simple as in the uniform magnetic field case (where $\check\kappa=\kappa_g$) but it is the model
in \eqref{n1a}, which 
we have studied in the previous section with
\begin{equation}\label{eq:def.eta.zeta}
\eta = \check \kappa -\kappa_g(x_0) ,\quad  \gamma=\kappa_{n,\Bb}(x_0)\;.
\end{equation}
In fact, since $v$ is supported in $\Omega_0^h$, we have, 
\begin{equation}\label{eq:qm2-P}
q_{m,2}^h(v)=\langle P_{0;\gamma,\theta}^{h,\eta,\zeta}v,v\rangle,
\end{equation}
where  $P_{0;\gamma,\theta}^{h,\eta,\zeta}$ is the operator  in \eqref{n1a}. 
\begin{remark}\label{rem:tau1}
We will choose  $\tau_1$ in such  a manner that 
$\frac13<\tau_1<6\delta-\frac43$. This choice is possible when $\delta$ satisfies $\frac5{18}<\delta<\frac13$.
\end{remark}

\subsubsection{Conclusion}

We can now write  a lower bound for the quadratic form $ q^h_{\Ab^{00}}(u)$ in \eqref{eq:norm-form},   assuming that  $u\in H^1(\tilde Q_h(x_0,R_0))$ and $\tilde Q_h(x_0,R_0)$ is the set introduced in \eqref{eq:tilde-Q}.
 Let $\frac5{18}<\delta<\frac13$ and $\frac13<\tau_1<6\delta-\frac43$. Collecting Lemmas~\ref{lem:qm=qm0}, \ref{lem:qm0=qm1},  \eqref{eq:qm1=qm2}, \eqref{eq:qm2-P} and Proposition~\ref{lemestmodzone2bis}, we get the existence of positive constants $C$ and $\varsigma_0$, such that 
\begin{multline}\label{eq:con-norm-form}
q^h_{\Ab^{00}}(u)\geq \big(h\Theta_0+h^{\frac 43} c^{\rm conj}\big(\theta,\kappa_{n,\Bb}(x_0)\big) -
 C h^{\frac 43+\varsigma_0} \big)\| u\| ^2\\
 -C h^{\frac 13+\varsigma_0}\|t u\| ^2-C\|(h^{2\delta}+h^{\tau_1})tu\|^2
\end{multline}
where  $c^{\rm conj}(\gamma, \theta)$ is introduced in Proposition~\ref{lemestmodzone2} with $\theta=\theta(s_0)$ the angle  in \eqref{eq:theta-r,t=0}.

\subsection{The general  case}

We return  now to  the proof of the asymptotics of the lowest eigenvalue, $\lambda_1^N(\Ab,h)$, of the operator $P_\Ab^h$  in \eqref{eq:op}. Under Assumptions (C1)-(C2), we will prove  the following lower bound:
\begin{equation}\label{eq:gse-lb-3D} 
\lambda_1^N(\Ab,h)\geq  \Theta_0 h + \widehat{\gamma}_{0,\Bb}  h^{\frac{4}{3}} + {\mathcal O}(h^{\frac{4}{3} + \eta_*}),
\end{equation}
for some constant $\eta_*>0$, where $\widehat\gamma_{0,\Bb}$ is introduced in \eqref{eq:hat-gam}.

Let $u_h$ be a normalized ground state  of $P_\Ab^h$ , i.e.
\[ \lambda_1^N(\Ab,h)=q_\Ab^h(u_h)=\|(h\nabla-i\Ab)u_h\|^2\,.\]
Consider $\frac5{18}<\delta<\frac13$ and the following neighborhood of the curve $\Gamma$,
\begin{equation}\label{eq:Gam-ep-h}
\Gamma_\delta^h=\{x\in\Omega~:~{\rm dist}(x,\partial\Omega)<h^\delta,~{\rm dist}_{\partial\Omega}(x,\Gamma)<h^{\delta/2}\}\,.
\end{equation}
In terms  of the $(r,s,t)$  coordinates introduced in Sec.~\ref{sbsec:coord},
\[\Gamma_\delta^h=\{0<t<h^\delta,~h^{\delta/2}<r<h^{\delta/2}\}\,.\]
Let $\chi_h\in C_c^\infty(\Gamma_\delta^h;[0,1])$  be a smooth function such that
\[\chi_h=1~{\rm on ~}\Gamma_{\delta,0}^h=\{x\in\Omega~:~{\rm dist}(x,\partial\Omega)<\frac12h^\delta,~{\rm dist}_{\partial\Omega}(x,\Gamma)<\frac12h^{\delta/2}\}\]
and
\[ |\nabla\chi_h|=\mathcal O(h^{-\delta/2}).\]
We introduce the function
\begin{equation}\label{eq:gs-wh}
w_h=\chi_hu_h\,.
\end{equation}
By Proposition~\ref{prop:dec-gs}, the eigenfunction $u_h$ is exponentially small outside $\Gamma_\delta^h$, since by our choice of $\delta$ we have  $h^{\delta/2}\gg h^{1/4}$  and $h^\delta\gg h^{1/2}$. So we have
\begin{equation}\label{eq:qf-wh}
\lambda_1^N(\Ab,h)=q_\Ab^h(u_h)=q_\Ab^h(w_h)+\mathcal O(h^\infty),\quad \|u_h\|=\|w_h\|+\mathcal O(h^\infty)\,.
\end{equation}
Consider now a partition of unity of $\R^3$
\[ \sum_{j\in\Z^3}|\chi_j|^2=1,\quad \sum_{j\in\Z^3}|\nabla\chi_j|^2
<\infty,\quad {\rm supp\,}\chi_j\subset j+[-1,1]^3\,, \]
and introduce the following functions
\begin{equation}\label{eq:gs-wh*}
w_{h,j}=\chi_{j,\delta}(x)w_h(x),\quad \chi_{j,\delta}(x)=\chi_j(h^{-\delta}x)\,.
\end{equation}
We can decompose   the quadratic form $q_\Ab^h(w_h)$ as follows
\begin{equation}\label{eq:decomp-qf-wh}
q_\Ab^h(w_h)=\sum_{j\in\mathcal J_h}q_\Ab^h(w_{h,j})+\mathcal O(h^{2-2\delta}),
\end{equation}
where
\begin{equation}\label{eq:ind-Jh}
\mathcal J_h=\{j\in\Z^3~:~{\rm supp\,}\chi_{j,\delta}\cap\Omega\not=\emptyset\}\,.
\end{equation}
Let $C_1>0$ be a fixed constant that we will choose later to be sufficiently large. We will estimate the energy $q_\Ab^h(w_{h,j})$ when the support of $w_{h,j}$ is near  the curve $\Gamma$, or away  from $\Gamma$, independently. So we introduce the sets of  indices
\begin{equation}\label{eq:ind-Jh*}
\begin{aligned}
\mathcal J_h^1&=\{j\in\mathcal J_h~:~{\rm dist}\big({\rm supp\,}\chi_{\gamma,\delta},\Gamma\big)\leq C_1h^\delta\}\\
\mathcal J_h^2&=\{j\in\mathcal J_h~:~{\rm dist}\big({\rm supp\,}\chi_{\gamma,\delta},\Gamma\big)\geq C_1h^\delta\}
\end{aligned}\,.
\end{equation}
By Proposition~\ref{lem:10.1*},
\begin{equation}\label{eq:lb-J2-s1}
 \sum_{j\in\mathcal J_h^2}q_\Ab^h(w_{h,j})\geq \sum_{j\in\mathcal J_h^2}\Big((\Theta_0h+c_*h^{1+\delta})
 \|w_{h,j}\|^2-Ch^{\frac43+\varsigma_0}M_h(w_{j,h}\Big)\,,\end{equation}
 where $M_h(w_{j,h})$ is introduced in \eqref{eq:Mh(u)}.  Notice  that
\begin{multline*} M_h(w_{j,h})\leq
 \sum_{n=0}^6 h^{-n/2}\int_\Omega t(x)^n
\Big(|\chi_{j,h}u_h|^2+2h^{-1}|\chi_{j,h}(h\nabla-i\Ab)u_h|^2\\+2h|\nabla(\chi_h\chi_{j,h})|^2|u_h|^2\Big)dx\,.\end{multline*}
Since $\sum |\chi_{j,h}|^2\leq 1$ and $\sum |\nabla(\chi_{j,h}\chi_h)|^2=\mathcal O(h^{-2\delta})$, Proposition~\ref{prop:dec-gs} together with  \eqref{eq:dec-gs1} and \eqref{eq:dec-gs2} yield
 \[
 \sum_{j\in\mathcal J_h }M_h(w_{j,h})=\mathcal  O(1)\,.
 \]
 Consequently, we infer from \eqref{eq:lb-J2-s1},  
\begin{equation}\label{eq:lb-J2}
 \sum_{j\in\mathcal J_h^2}q_\Ab^h(w_{h,j})\geq (\Theta_0h+c_*h^{1+\delta})\Big(\sum_{j\in\mathcal J_h^2} 
 \|w_{h,j}\|^2\Big)-C'h^{\frac43+\varsigma_0}\,.\end{equation} 
For $j\in\mathcal J_h^1$, we estimate $q_\Ab^h(w_{h,j})$ by collecting \eqref{eq:con-norm-form} and the estimates in Lemma~\ref{lem:10.1} and \ref{lem:10.2}.  We start  by picking  $R_0>0$ and $x_0^j\in\Gamma$,  so that 
\[ {\rm supp\,}\,w_{h,j}\subset Q_h(x_0^j)\]
where $Q_h(x_0^j)$ is introduced in \eqref{eq:Q(x0)}. 
Eventually, we find
\[ \sum_{j\in\mathcal J_h^1}q_\Ab^h(w_{h,j})\geq \sum_{j\in\mathcal J_h^1}(\Theta_0h+ h^{4/3}c^{\rm conj}(\theta_j, \kappa_{n,\Bb}(x_j^0)) )\|w_{h,j}\|^2-Ch^{\frac43+\varsigma_*}\,,\]
for some constant $\varsigma_*>0$, where
\[\theta_j=\theta(s_0^j)\]
and $(0,s_0^j,0)$ denote the coordinates of $x_0^j$ in the $(r,s,t)$-frame (see Sec.~\ref{sec:coordinates} and Eq.~\ref{eq:Phi-1-x0}).  Note that  we used Proposition~\ref{prop:dec-gs}  to control the term $\sum_{j\in\mathcal J_h^1}\|t w_{h,j}\|^2$ appearing in \eqref{eq:con-norm-form}; in fact $\sum_{j\in\mathcal J_h^1}\|t w_{h,j}\|^2=\mathcal O(h)$.

 Since $c^{\rm conj}(\theta_j, \kappa_{n,\Bb}(x_j^0))$  is bounded from below by  $\widehat{\gamma}_{0,\Bb}$ (see \eqref{eq:hat-gam}),  we get
\begin{equation}\label{eq:lb-J1}
 \sum_{j\in\mathcal J_h^1}q_\Ab^h(w_{h,j})\geq (\Theta_0h+\widehat{\gamma}_{0,\Bb} h^{4/3} )\sum_{j\in\mathcal J_h^1}\|w_{h,j}\|^2-Ch^{\frac43+\varsigma_*}\,.\end{equation}
Inserting \eqref{eq:lb-J2} and \eqref{eq:lb-J1} into \eqref{eq:decomp-qf-wh},  and using \eqref{eq:qf-wh},  we deduce the lower bound in \eqref{eq:gse-lb-3D},  since $\frac5{18}<\delta<\frac13$.

\section{Upper bound}\label{sec:ub}
Fortunately,   the same quasi-mode constructed in \cite[Sec.~12]{HelMo4} (see also \cite{Pan8} for a different  formulation) yields an upper bound of the lowest eigenvalue $\lambda_1(\Ab,h)$ matching with the asymptotics in Theorem~\ref{thm:main}. More precisely,  under Assumptions (C1)-(C2), we will prove that:
\begin{equation}\label{eq:gse-ub-3D} 
\lambda_1^N(\Ab,h)\leq  \Theta_0 h + \widehat{\gamma}_{0,\Bb} h^{\frac{4}{3}} + {\mathcal O}(h^{\frac{4}{3} + \eta^*}),
\end{equation}
for some constant $\eta^*>0$, where $\widehat\gamma_{0,\Bb}$ is introduced in \eqref{eq:hat-gam}.

 However, while computing the energy of the quasi-mode, we observe additional terms (not present in \cite{HelMo4})  due to the non-homogeneity of the  magnetic field. These terms are treated in  Sec.~\ref{sec:en-qm}.

\subsection{The quasi-mode}\label{sec:qm}

The construction of  the quasi-mode in \cite{HelMo4}  is  quite lengthy and involves many auxiliary functions related to the  de\,Gennes and Montgomery  models (see \eqref{eq:Harm-osc} and \eqref{eq:Mont}).  We present here the definition of the quasi-mode along with  a useful  result from \cite[Sec.~12]{HelMo4}. 

\subsubsection{Geometry and normal form}

  Select a point $x_0\in\partial\Omega$ such that the function  in \eqref{eq:tilde-gam} satisfies
\[ \widetilde\gamma_{0,\Bb}(x_0)=\widehat\gamma_{0,\Bb}\,.\]
Let us assume that the coordinates of $x_0$ in the $(r,s,t)$-frame are $(0,s_0=0,t_0)$.  The normal form of the effective 
 magnetic potential in Lemma~\ref{lem:normal-form} now becomes
 \begin{equation}\label{eq:up-A} \Ab^{00}=\left( \begin{array}{c} A^{00}_1\\
 A^{00}_2\\
 A^{00}_3\end{array}\right)=\left( \begin{array}{c} t\sin\theta+t(\zeta r+\check\kappa s)\cos\theta\\
 -t\cos\theta+rt\kappa \cos +t(\zeta r+\check\kappa s)\sin\theta+\frac12\gamma r^2\\
 0
 \end{array}\right)\,,
 \end{equation}
 where
 \begin{equation}\label{eq:dictionary}
 \theta=\theta(s_0),\quad \kappa=\kappa_g(s_0),\quad \gamma=\kappa_{n,\Bb}(x_0)\,.
 \end{equation}
 \subsubsection{Structure of the quasi-mode}
 
Consider two positive constants $C_0$ and $\delta$ such that $\frac{5}{18}<\delta<\frac13$.  Let $\chi$ be a smooth \emph{even} function,  valued in $[0,1]$,  equal  to $1$ on $[-\frac14,\frac14]$ and supported in $[-\frac12,\frac12]$.    We set
\begin{equation}\label{eq:ub-chi-h}
\chi_h(s)=c_1h^{-\delta/2}\chi(C_0^{-1}h^{-\delta}s)\,,
\end{equation}
 where $c_1=C_0^{-1/2}\left(\int_\R\chi(\sigma)^2d\sigma\right)^{1/2}$, so that $\chi_h$ is normalized  as follows,
 \[ \int_{\R} |\chi_h(s)|^2ds=1\,.\]
Our quasi-mode, $u$, is supported in the set $Q_h(x_0,R_0,\delta,\epsilon_2)$ introduced in \eqref{eq:Q(x0)} and is of  the form
 \begin{equation}\label{eq:up-def-qm-u}
  u=e^{i\check p/h}(1-r\kappa)^{-1/2}\tilde u 
  \end{equation}
 where $(r,s,t) \mapsto \check p(r,s,t) $ is the function from Lemma~\ref{lem:normal-form} and the function $\tilde u$ is of  the form
 \begin{equation}\label{eq:def-tilde-u}
 \tilde u(r,s,t)=\exp\left(-\frac{i\rho\gamma s}{h^{1/3}}\right)\exp\left( i\frac{r\sin\theta-s\cos\theta}{h^{1/2} } \xi_0\right)\chi_h(s)v(r,t)\,,
 \end{equation}
where $\xi_0=\sqrt{\Theta_0}$ is given by \eqref{eq:Th0},  $\theta$ and $\kappa$  are  introduced  in \eqref{eq:dictionary}.\\
 The choice of $\rho$ and $v$ will be specified later\footnote{ $\rho$ is defined in \eqref{sec:en-qm}. For the definition of $v$, see \eqref{eq:def-v}, \eqref{eq:ub-def-v0} and \eqref{eq:ub-def-wh}.} so that, for some constants $C,\varsigma_*>0$,  we have \cite[Eq.~(12.8)]{HelMo4}
 \begin{equation}\label{eq:HM-(12.8)}
  q^h_{M^{00}}(v)\leq \Big(\Theta_0 h + \widehat{\gamma}_{0,\Bb}  h^{\frac{4}{3}} + Ch^{\frac{4}{3} + \varsigma_*}\Big)\|v\|_{L^2(\R\times\R_+)}^2\,.
 \end{equation}
 Here $q_{M^{00}}(v)$ arises while computing the quadratic form of  the quasi-mode in \eqref{eq:up-def-qm-u}. It is defined as follows \cite[Eq.~(12.9)]{HelMo4},
 \begin{equation}\label{eq:up-def-qf-M}
 q^h_{M^{00}}(v)=\int_{\R\times\R_+}\Big(|(hD_r-M_{1}^{00}v|^2+|M_2^{00}v|^2 +|hD_tv|^2 \Big)drdt\,,
 \end{equation}
 where
 \begin{equation}\label{eq:ub-def-M}
 \begin{aligned}
M_{1}^{00} (r,t)&= \sin\theta( t-h^{1/2}\xi_0)\\
M_{2}^{00}(r,t) &=(1+2\kappa r)^{1/2}\Big(- \cos\theta ( t-h^{1/2}\xi_0) + \kappa\cos\theta rt-b\frac{\gamma}{2}(r^2-h^{2/3} \rho)\Big)\,.
\end{aligned}
\end{equation}
Notice that, by our normalization of $\chi_h$, we have
\begin{equation}\label{eq:ub-norm-tilde-u}
 \int_{\R^2\times\R_+} |\tilde u(r,s,t)|^2drdsdt=\|v\|^2_{L^2(\R\times\R_+)}\,.
 \end{equation}

 \subsubsection{Definition of the auxiliary objects}
 
 Let us recall the definition of the function $v$ and  the parameter $\rho$ given in \cite[Sec.~12]{HelMo4}.
 The function $v$  depends on $h$ and is selected in the following form (see \cite[Eq.~(12.14)]{HelMo4})
 \begin{equation}\label{eq:def-v}
 v(r,t)= h^{-5/12}v_0(\hat r,\hat t)
 \end{equation}
 where 
 \[(\hat r,\hat t)= (h^{-1/3}r,h^{-1/2}t)\,.\]
 The  function $v_0$ is selected as in \cite[Eq.~(12.28)]{HelMo4}:
  \begin{equation}\label{eq:ub-def-v0} 
  v_0(\hat  r,\hat t)=\chi(C_0^{-1}h^{-\delta+\frac13}\hat r)\chi(C_0^{-1}h^{-\delta+\frac12}\hat t)w_h(\hat r,\hat  t),
  \end{equation}
  In the sequel, we  skip  the hats from  the notation.    The  function $w_h$ is defined as follows\footnote{ For the convenience of  the reader, we will recall the heuristics behind the construction of $w_h$ in Subsection~\ref{sec:up-def-wh}.}  \cite[Eq.~(12.22)]{HelMo4}
\begin{equation}\label{eq:ub-def-wh}
 w_h(r,t)=\varphi_0(t)\psi(r)+h^{1/6}\varphi_1(t)L_1^0(r,D_r)\psi(r)+h^{1/3}\varphi_2(t)\big(L_1^0(r,D_r)\big)^2\psi(r)\,, 
 \end{equation}
 where $\varphi_0$ is the positive normalized ground state of the harmonic oscillator in  \eqref{eq:Harm-osc},
 \[ \varphi_1(t)=2\mathcal R_0\big((t-\xi_0)\varphi_0\big),\quad \varphi_2(t)=2\mathcal R_0\big((t-\xi_0)\varphi_1-\langle (t-\xi_0)\varphi_1,\varphi_0\rangle \varphi_0\big) \] 
 and $\mathcal R_0$ is  the regularized resolvent introduced in \eqref{eq:Reg-res}. Notice that $\varphi_0,\varphi_1$ and $\varphi_2$ are Schwartz functions (i.e.  in $\mathcal S(\R_+)$,  see  \cite[Appendix~A]{FH06}). 
The definition of $w_h$ involves the differential operator
\begin{equation}\label{eq:ub-L1,L2}
L_1^0(r,D_r)=\sin\theta D_r-\frac{1}2\cos\theta \gamma(r^2-\rho) 
\end{equation} 
and a function $\psi\in\mathcal  S(\R)$ defined via the ground state  $\psi_0$ of  the Montgomery model  in \eqref{eq:Mont} and the following phase function
\[\varphi(r)=\gamma\alpha(\theta)\Big(\frac{r^3}{6}+\frac{\rho r}{2} \Big)\,, \]
where
\[\alpha(\theta)=\frac{\sin\theta\cos\theta(1-\delta_0) }{\delta_0\sin^2\theta+\cos^2\theta }\,, \]
and $\delta_0$ the constant introduced in \eqref{eq:Th0}.  We define now the function $\psi(r)$ as follows
\[\psi(r)=\Big(\frac{c}{d} \Big)^{-1/12}\exp\big( i\varphi(r)\big)\,\psi_0\Big( \Big(\frac{c}{d} \Big)^{-1/6}r\Big) \]
where 
\begin{equation}\label{eq:def-psi}c=\cos^2\theta+\delta_0\sin^2\theta,\quad  d=\frac{\delta_0^2\gamma^2}{\delta_0\sin^2\theta+\cos^2\theta }\,, \end{equation}
and we choose (see \eqref{eq:Mont})
\begin{equation}\label{eq:ub-def-rho}
\rho= \Big(\frac{c}{d} \Big)^{1/3}\rho_0 .
\end{equation}
We conclude by  mentioning some estimates which follow easily from the definitions of $v$  and  $v_0$ in \eqref{eq:def-v} and \eqref{eq:ub-def-v0}:
\begin{equation}\label{eq:ub-est-v}
\begin{aligned}
\|v\|_{L^2(\R\times\R_+)}^2&=1+\mathcal O(h^{1/6})\,,\\
\int_{\R\times\R_+}r^kt^{n}|v|^2drdt&=\mathcal O(h^{\frac{k}3+\frac{n}2})\quad(k,n\geq 0)\,,\\
\int_{\R\times\R_+}|hD_rv|^2drdt&=\mathcal O(h^{5/3}), \quad\int_{\R\times\R_+}|hD_tv|^2=\mathcal O(h)\,.
\end{aligned}
\end{equation}
 
\subsubsection{Heuristics on  the construction of  $w_h$.}\label{sec:up-def-wh}
Starting from the definition of the  function  $v$ in \eqref{eq:ub-def-v0}, the quadratic form in \eqref{eq:HM-(12.8)} becomes (after neglecting error terms in the  magnetic potential)
\[ q^h_{M^{00}}(v)\approx h\tilde q^h(w_h)\]
where
\[\tilde q^h(w_h):=\int_{\R^2_+}\Big( |D_tw_h|^2+\big|\big(t-\xi_0-h^{1/6}L_1^0(r,D_r)\big)w_h\big|^2+h^{1/3}|L_2^0(h,D_r)w_h|^2 \Big)drdt\,,\]
$L_1^0(r,D_r)$ is introduced  in \eqref{eq:ub-L1,L2} and 
\[L_2^0=\cos\theta D_r+\frac12\sin\theta \gamma(r^2-\rho)\,.  \]
The construction of $w_h$  is based on minimizing
\[ \int_\R\left( \int_{\R_+} \Big(|D_tw_h|^2+\big|\big(t-\xi_0-h^{1/6}L_1^0(r,D_r)\big)w_h\big|^2\Big) dt\right)dr\,,\]
which amounts to finding the lowest eigenvalue of the operator
\[\mathcal  T_h:= D_t^2+\big(t-\xi_0-h^{1/6}L_1^0(r,D_r)\big)^2\,.\]
Writing
\[ \mathcal  T_h=D_t^2+(t-\xi_0)^2-2h^{1/6}(t-\xi_0)^2L_1^0(r,D_r)+h^{1/3}\big(L_1^0(r,D_r)\big)^2\,,\]
it is natural to search for $w_h$   in the  form in \eqref{eq:ub-def-wh} and satisfying 
\[ \mathcal T_hw_h- \left(\mu_0+\mu_1h^{1/6}L_1^0(r,D_r)+\mu_2^{1/3}\big(L_1^0(r,D_r)\big)^2\right)w_h\approx 0\]
in  the  following sense  (after taking the coefficients of $h^{i/6}$ to be $0$, for $i=0,1,2$)
\[ \begin{aligned}
 \big(D_t^2+(t-\xi_0)^2-\mu_0\big)\varphi_0&=0\\
 \big(D_t^2+(t-\xi_0)^2-\mu_0\big)\varphi_1&=\mu_1\varphi_0\\
 \big(D_t^2+(t-\xi_0)^2-\mu_0\big)\varphi_2&=\mu_2\varphi_0+\mu_1\varphi_1
\end{aligned}\]
Eventually, this leads to  $\mu_0=\Theta_0$, $\mu_1=0$, $\mu_2=\frac12\mu''(\xi_0)$  and $\varphi_0,\varphi_1,\varphi_2$ as in \eqref{eq:ub-def-wh}.

\subsection{Energy estimates}\label{sec:en-qm}

We will estimate the following energy arising  from Lemma~\ref{lem:10.2}:
\[
q^h_{\Ab^{00}}(\tilde u)=\int_{\R^2\times\R_+}\Big(|(hD_r-A_1^{00})\tilde u|^2+(1+2\kappa r)|(hD_s-A_2^{00})\tilde u|^2 +|hD_t\tilde u|^2\Big)drdsdt\,,\]
where $A_1^{00},A_2^{00}$ are introduced in \eqref{eq:up-A}. 

 Actually,  $q^h_{\Ab^{00}}(\tilde u)$ is bounded from above  by $q_{M^{00}}(v)$ modulo  error terms, where  $q_{M^{00}}(v)$  and  $v$  are  introduced in \eqref{eq:up-def-qf-M} and  \eqref{eq:def-tilde-u} respectively.  Due to the non-homogeneity of the magnetic field, the error terms involve a quantity\footnote{ This is $A(v)+B(v)$  appearing in \eqref{eq:ub-red-zeta=0}, which would equals  $0$ if the magnetic field were constant.} introduced in \eqref{eq:ub-red-zeta=0} whose control has to be done carefully.

Due  to  the phase terms in the definition of $\tilde u$ in \eqref{eq:def-tilde-u},  we have
\[ q^h_{\Ab^{00}}(\tilde u)
=\int_{\R^2\times\R_+}\Big(|hD_t\tilde u|^2+|(hD_s-A_{2,\rm  new}^{00})\tilde u|^2+|(hD_r-A_{1,\rm new}^{00})\tilde u|^2 \Big)drdsdt\]
where
\[ \left(\begin{array}{c}
A_{1,\rm new}^{00}\medskip\\
A_{2,\rm new}^{00}
\end{array}\right)=\left(\begin{array}{c}
M_{1,\zeta}^{00}\medskip\\
M_{2,\zeta}^{00}
\end{array}\right)+
\left(\begin{array}{c}
\check\kappa  \cos\theta st\medskip\\
\check\kappa  \sin\theta st
\end{array}\right)\,,  \]
and
\begin{equation}\label{eq:ub-M12}
\begin{aligned}
M_{1,\zeta}^{00}(r,t) &=\sin\theta( t-h^{1/2}\xi_0)+\zeta \cos\theta rt\\
M_{2,\zeta}^{00} (r,t) &=(1+2\kappa r)^{1/2}\Big(-\cos\theta (t-h^{1/2}\xi_0)\\&\qquad+ (\kappa\cos\theta+\zeta\sin\theta)  rt-\frac{\gamma}{2}(r^2-h^{2/3}\rho)\Big)\,.
\end{aligned}
\end{equation}
Since the function $s\mapsto \chi_h(s)$ is even, we have
\[\langle (hD_s-\check\kappa \sin\theta st) u,  M_{2,\zeta}^{00}u\rangle_{L^2(\R^2\times\R_+)}=0 \]
and
\[ \langle \check\kappa \cos\theta st \,u,  (hD_r-M_{1,\zeta}^{00})u\rangle_{L^2(\R^2\times\R_+)}=0\,.\]
Moreover,  we  have the estimates
\begin{align*}
\|(hD_s-\check\kappa \sin\theta st) \tilde u\|_{L^2(\R^2\times\R_+)}&\leq C \int_{\R\times\R_+}(h^{2-2\delta}+h^{2\delta} t^2)|v|^2drdt\\
&=\mathcal O(h^{2-2\delta}+h^{2\delta+1})\,, 
\end{align*}
and
\[ \|\check\kappa \cos\theta st\, u\|_{L^2(\R^2\times\R_+)}\leq Ch^{2\delta}  \int_{\R\times\R_+}t^2|v|^2drdt=\mathcal O(h^{2\delta+1}).\]
Notice that we used \eqref{eq:ub-est-v} and also that $|s|\leq C_0h^{\delta}$ in the support of $\tilde u$.   Consequently,  we get
\begin{equation}\label{eq:ub-step1} 
\begin{aligned}
q^h_{\Ab^{00}}(\tilde u)&\leq \int_{\R\times\R_+}\Big(|(hD_r-M_{1,\zeta}^{00})v|^2+ |M_{2,\zeta}^{00} v|^2+|hD_tv|^2\Big)drdt\\
&\qquad+\mathcal O\big(h^{2-2\delta}+h^{2\delta+1}\big)\,.
\end{aligned}
 \end{equation}
 Let us now reduce the computations to the potentials $M_1^{00}$ and $M_2^{00}$  in \eqref{eq:ub-def-M} which amount to $M_{1,\zeta}^{00}$ and $M_{2,\zeta}^{00}$ with $\zeta=0$. 
  A straightforward computation yields,
\begin{multline}\label{eq:ub-red-zeta=0}
\|(hD_r-M_{1,\zeta}^{00})v\|^2_{L^2(\R\times\R_+)}+\|M_{2,\zeta}^{00}v\|^2_{L^2(\R\times\R_+)}\\
=\|(hD_r-M_{1}^{00})v\|^2_{L^2(\R\times\R_+)}+\|M_{2}^{00}v\|^2_{L^2(\R\times\R_+)}+\zeta \big(A(v)+B(v)\big),
\end{multline}
where
\[
\begin{aligned}
A(v)&:=\zeta\cos^2\theta\,\| rt v\|^2_{L^2(\R\times\R_+)}-2\cos\theta\,{\rm Re}\langle (hD_r-M_1^{00})v\,,\,rtv \rangle_{L^2(\R\times\R_+)}\\
B(v)&:=\zeta\sin^2\theta\,\| rt v\|^2_{L^2(\R\times\R_+)}+2\sin\theta\,{\rm Re}\langle M_2^{00}v\,,\,rtv \rangle_{L^2(\R\times\R_+)}
\end{aligned}
\]
 and by \eqref{eq:ub-est-v}
 \[\| rt v\|^2_{L^2(\R\times\R_+)}=\mathcal O(h^{5/3}),\quad \langle hD_rv\,,\,rtv \rangle_{L^2(\R\times\R_+)}=\mathcal O( h^{5/3})\,.  \]
 So, we end up with estimating
 \[
 F(v):=\langle (\cos\theta M_1^{00}+\sin\theta M_2^{00})v\,,\,rtv \rangle_{L^2(\R\times\R_+)} \,.\]
 Notice that
\begin{multline*}
\cos\theta M_1^{00}(r,t)+\sin\theta M_2^{00}(r,t) =\cos\theta\sin\theta\big(1-(1+2\kappa r)^2 \big) (t-h^{1/2}\xi_0)\\
+(1+2\kappa r)^2\cos\theta\Big(\kappa\cos \theta rt-\frac{\gamma}{2}\Big(r^2-h^{2/3}\rho\Big)\Big)\,. \end{multline*}

By expanding 
\[ (1+2\kappa r)^{1/2}=1+\kappa r+\mathcal O(r^2)\quad (r\to0)\,,\]
 we observe  that,   for $|r|\leq r_0$ and $r_0$ sufficiently  small,
 \[ |\cos\theta M_1^{00}(r,t)+\sin\theta M_2^{00}(r,t)|\leq C(r^2+t^2+h^{2/3})\,,\]
 so we get by \eqref{eq:ub-est-v} and the Cauchy-Schwarz inequality that
 \[ F(v)=\mathcal O(h^{3/2})\,.\]
 Therefore,  $A(v)+B(v)=\mathcal O(h^{3/2})$ and we deduce from \eqref{eq:ub-red-zeta=0} and 
 \eqref{eq:ub-step1} that
 \begin{equation}\label{eq:ub-eff-qf}
q^h_{\Ab^{00}}(\tilde u)\leq q^h_{M^{00}}(v)+\mathcal O\big(h^{2-2\delta}+h^{2\delta+1}+h^{3/2}\big)\,,
\end{equation} 
where $q^h_{M^{00}}(v)$ is introduced in \eqref{eq:up-def-qf-M}.

\subsection{Conclusion}\label{sec:ub-conc}

Collecting \eqref{eq:ub-eff-qf} and \eqref{eq:HM-(12.8)},  we get 
\[q^h_{\Ab^{00}}(\tilde u)\leq (\Theta_0 h + \widehat{\gamma}_{0,\Bb}  h^{\frac{4}{3}} + Ch^{\frac{4}{3} + \eta}\Big)\|v\|_{L^2(\R\times\R_+)}^2+ R_h(v) \]
where
\[ R_h(v)=\mathcal O\big(h^{2-2\delta}+h^{2\delta+1}+h^{3/2}\big)=\mathcal O(h^{\frac43+\hat\eta})\]
for some $\hat\eta>0$, thanks to the condition $\frac5{18}<\delta<\frac13$.

We insert this into Lemma~\ref{lem:10.2} with $u$ given in \eqref{eq:up-def-qm-u}. Notice that $u$ satisfies \eqref{eq:cond-u} with $M_h(u)=\mathcal  O(1)$. So by Lemma~\ref{lem:10.2} and \eqref{eq:ub-norm-tilde-u}, we get for some $\eta_*>0$
\[ q^h_{\Ab}(u)\leq  (\Theta_0 h + \widehat{\gamma}_{0,\Bb}  h^{\frac{4}{3}} + Ch^{\frac{4}{3} + \eta_*}\Big)\|v\|_{L^2(\R\times\R_+)}^2\,. \] 
 Comparing \eqref{eq:ub-norm-tilde-u} and \eqref{eq:up-def-qm-u}, we get by \eqref{eq:norm-g},
\begin{equation}\label{eq:up-def-qm-u-norm}
\|u\|^2=\big(1+\mathcal O(h^{2\delta})\big)\|v\|^2_{L^2(\R\times\R_+)}\,.
\end{equation}

  Applying the min-max principle, and noticing that $1+2\delta>\frac43$ for $\frac5{18}<\delta<\frac13$, we finish the proof of \eqref{eq:gse-ub-3D}.

\subsection*{Acknowledgments}
Preliminary discussions of the first author  on this problem with Xingbin Pan more than twelve years ago are acknowledged. This work was initiated while the second author  visited the \emph{Laboratoire de Math\'ematiques Jean Leray} (LMJL)  at  \emph{Nantes Universit\'e}  in 2021.  The authors would like to thank the  support from the  \emph{F\'ed\'eration de  recherche  Math\'ematiques des Pays de Loire} and \emph{Nantes Universit\'e}.

\end{document}